\DeclareMathOperator{\Ran}{Ran}
\DeclareMathOperator{\Ker}{Ker}
\DeclareMathOperator{\rank}{rank}
\DeclareMathOperator{\Clos}{Clos}
\DeclareMathOperator{\Span}{span}
\DeclareMathOperator{\BMOA}{BMOA}
\renewcommand\Im{\hbox{{\rm Im}}\,}
\newcommand{\abs}[1]{{\lvert#1\rvert}}
\newcommand{\norm}[1]{\lVert#1\rVert}
\newcommand{\bbT}{{\mathbb T}}
\newcommand{\bbR}{{\mathbb R}}
\newcommand{\bbC}{{\mathbb C}}
\newcommand{\bbZ}{{\mathbb Z}}
\newcommand{\bbD}{{\mathbb D}}
\newcommand{\calH}{{\mathcal H}}
\newcommand{\calK}{\mathcal{K}}
\newcommand{\bcalK}{\pmb{\mathcal{K}}}
\newcommand{\calC}{\mathcal{C}}
\newcommand{\bbf}{\mathbf f}
\newcommand{\bu}{\mathbf u}
\newcommand{\bC}{\mathbf C}
\newcommand{\bH}{\mathbf H}
\newcommand{\bpsi}{\pmb \psi}
\newcommand{\bP}{\mathbf P}
\newcommand{\bT}{\mathbf T}
\newcommand{\bp}{\mathbf p}
\newcommand{\btheta}{{\pmb \theta}}
\newcommand{\bvarphi}{{\pmb \varphi}}
\numberwithin{equation}{section}
\renewcommand{\[}{\begin{equation}}
\renewcommand{\]}{\end{equation}}
\theoremstyle{plain}
\newtheorem{theorem}{\bf Theorem}[section]
\newtheorem*{theorem*}{Theorem 1.1$'$}
\newtheorem{lemma}[theorem]{\bf Lemma}
\newtheorem{proposition}[theorem]{\bf Proposition}
\newtheorem{corollary}[theorem]{\bf Corollary}
\theoremstyle{definition}
\newtheorem*{definition*}{\bf Definition}
\theoremstyle{remark}
\newtheorem*{remark*}{\bf Remark}
\newtheorem{remark}[theorem]{\bf Remark}
\DeclareFontFamily{U}{mathx}{\hyphenchar\font45}
\DeclareFontShape{U}{mathx}{m}{n}{<5> <6> <7> <8> <9> <10>
<10.95> <12> <14.4> <17.28> <20.74> <24.88> mathx10}{}
\DeclareSymbolFont{mathx}{U}{mathx}{m}{n}
\DeclareMathAccent{\wc}{0}{mathx}{"71}
\newcommand{\wt}{\widetilde}
\newcommand{\wh}{\widehat}
\newcommand{\1}{\mathbbm{1}}
\newcommand{\proj}{P}
\newcommand{\real}{{\rm real}}
\begin{document}

\title[Schmidt subspaces for Hankel operators]{Weighted model spaces and Schmidt subspaces of Hankel operators}

\author{Patrick G\'erard}
\address{Universit\'e Paris-Sud XI, Laboratoire de Math\'ematiques d'Orsay, CNRS, UMR 8628, et Institut Universitaire de France}
\email{Patrick.Gerard@math.u-psud.fr}

\author{Alexander Pushnitski}
\address{Department of Mathematics, King's College London, Strand, London, WC2R~2LS, U.K.}
\email{alexander.pushnitski@kcl.ac.uk}

\subjclass[2010]{47B35,30H10}

\keywords{Hankel operators, Hardy space, weighted model spaces, nearly invariant subspace, Adamyan-Arov-Krein theory}

\begin{abstract}For a bounded Hankel matrix $\Gamma$, we describe the structure of the Schmidt subspaces 
of $\Gamma$, namely the eigenspaces of $\Gamma^* \Gamma$ corresponding to non zero eigenvalues. 
We prove that these subspaces are in correspondence with weighted model spaces in the Hardy space on the unit circle. 
Here we use the term ``weighted model space" to describe the range of an isometric multiplier acting on a model space. 
Further, we obtain similar results for Hankel operators acting in the Hardy space on the real line. 
Finally, we give a streamlined proof of the Adamyan-Arov-Krein theorem  using the language of weighted model spaces. 
\end{abstract}

\date{10 July 2019}

\maketitle

\section{Introduction and main results}\label{sec.a}

\subsection{Overview}

Let $\Gamma=\{\gamma_{j+k}\}_{j,k=0}^\infty$ be a Hankel matrix, which we assume
to be bounded (but not necessarily compact) on $\ell^2=\ell^2(\bbZ_+)$. 
Under the standard identification between $\ell^2$ and the Hardy space $\calH^2(\bbT)$
(precise definitions are given below), 
$\Gamma$ can be considered as a bounded operator on $\calH^2(\bbT)$. 
It is well known that the kernel of $\Gamma$ is an invariant subspace of the shift operator, and
therefore by Beurling's theorem \cite{Beurling} it is either trivial or can be identified with a subspace of the form $\psi\calH^2(\bbT)$
for some inner function $\psi$. 

The question we address in this paper is: 
\emph{How can one characterise eigenspaces $\Ker(\Gamma^*\Gamma-s^2I)$, $s>0$, as a class of subspaces in the Hardy space?}

Our main result (Theorem~\ref{thm.z0}) is that every such eigenspace can be identified with a subspace of the form
\[
\Ker(\Gamma^*\Gamma-s^2I)=
p\calK_{z\theta}\subset\calH^2(\bbT),
\label{zz}
\]
where $\theta$ is an inner function, 
$\calK_{z\theta}=\calH^2\cap (z\theta\calH^2)^\perp$ is 
a model space, and $p$ is an \emph{isometric multiplier} on $\calK_{z\theta}$. 
Furthermore, we show that the action 
\[
\Gamma: \Ker (\Gamma^*\Gamma-s^2 I)\to \Ker (\Gamma\Gamma^*-s^2 I)
\label{z00}
\]
is given by a simple explicit formula, which is completely 
determined by $s$, $p$ and $\theta$. 
Finally, we prove that the degree of the inner part of $p$ coincides with the total multiplicity of the
spectrum of $\Gamma^*\Gamma$ in the interval $(s^2,\infty)$. 
As a simple corollary, we also characterise all eigenspaces of
self-adjoint Hankel matrices $\Gamma$. 

Important precursors to our results are the Adamyan-Arov-Krein (AAK) theory \cite{AAK3}
and the description \cite{GGAst} of eigenspaces of $\Gamma^*\Gamma$ for \emph{compact}
Hankel operators $\Gamma$ (in which case the corresponding inner functions $\theta$ reduce to finite Blaschke products). 
However, the isometric multiplier structure \eqref{zz} seems to be a new result even for finite rank Hankel operators. 

We were also inspired by works on the structure of \emph{nearly $S^*$-invariant subspaces}
and the kernels of Toeplitz operators; see \cite{Hayashi,Hitt,Sarason} for early works  and
\cite{Aleman,Dyakonov,Partington,Fricain,Hartmann} for later developments and surveys of the subject. 
In a companion paper \cite{paperB} we consider in detail the interesting special case when 
the spectrum of $\Gamma^*\Gamma$ is finite, solve an inverse spectral problem involving the parameters $s$, $\theta$
and give an explicit description of the corresponding class of symbols.

We shall mention here one important aspect of our proof: it turns out that the natural approach to this problem 
is to consider $\Gamma$ 
together with the ``shifted" Hankel matrix
$\wt \Gamma=\{\gamma_{j+k+1}\}_{j,k=0}^\infty$.
Another feature of our approach is the analysis of a certain new class of subspaces of the Hardy space, 
which generalise nearly $S^*$-invariant subspaces.

Let us recall some terminology. 
A real number $s>0$ is called a \emph{singular value} of $\Gamma$, if 
$s^2$ is an eigenvalue of the operator $\Gamma^*\Gamma$. 
This is slightly wider than the standard definition: we assume neither compactness
of $\Gamma$ nor $s^2$ to be an isolated point in the spectrum of $\Gamma^*\Gamma$.
We note that for a bounded Hankel operator $\Gamma$, the operator $\Gamma^*\Gamma|_{(\Ker \Gamma)^\perp}$ can have 
arbitrary spectrum, see \cite{Treil}.

If $s$ is a singular value of $\Gamma$, then a pair $\{\xi,\eta\}$ of elements of $\ell^2$ is called
a \emph{Schmidt pair} (more precisely, an $s$-Schmidt pair) of $\Gamma$, if it satisfies
$$
\Gamma \xi=s\eta, \quad \Gamma^*\eta=s\xi.
$$
Clearly, $s$-Schmidt pairs form a linear subspace of dimension $\dim\Ker(\Gamma^*\Gamma-s^2I)\leq \infty$.
We will call $\Ker(\Gamma^*\Gamma-s^2I)$ the \emph{Schmidt subspace} of $\Gamma$. 
The problem of description of all $s$-Schmidt pairs of $\Gamma$ is equivalent to the problem of the 
description of the action \eqref{z00}.

We briefly describe the structure of this rather long introductory section. 
In Sections~\ref{sec.z1} and \ref{sec.z2} we describe the realisation of Hankel matrices on the Hardy space. 
In Section~\ref{sec.z3} for the purposes of comparison we recall the classical Adamyan-Arov-Krein theorem. 
In Section~\ref{sec.z4} as a warm-up we consider Hankel operators with inner symbols; this allows us to 
introduce model spaces into the subject in a natural way. 
In Section~\ref{sec.z5} we discuss isometric multipliers on model spaces. 
In Section~\ref{sec.z6} we state and discuss our main result. 
In Section~\ref{sec.z7} we consider the special case of self-adjoint Hankel matrices $\Gamma$
and describe their eigenspaces. 
In Section~\ref{sec.z8} we state the analogue of our main result for Hankel operators acting
on the Hardy space $\calH^2(\bbR)$ of the real line.

\subsection{Anti-linear operators}\label{sec.z1}
Let us denote by $\calC$ the anti-linear operator of complex conjugation on $\ell^2$: 
$\calC \xi=\overline{\xi}$. Observe that the Hankel matrix $\Gamma$ is symmetric and therefore
$\Gamma\calC =\calC\Gamma^*$. 
It follows that 
$$
\xi\in \Ker(\Gamma^*\Gamma-s^2I)\quad\Leftrightarrow\quad \overline{\xi}\in\Ker(\Gamma\Gamma^*-s^2I)
$$
and the \emph{anti-linear} map $\Gamma\calC$ maps $\Ker(\Gamma\Gamma^*-s^2I)$ onto itself.
Further, it is clear  that the map
\[
s^{-1}\Gamma\calC: \Ker(\Gamma\Gamma^*-s^2I)\to \Ker(\Gamma\Gamma^*-s^2I) 
\label{z3}
\]
is an involution. 

Thus, the problem of the description of the action \eqref{z00} of $\Gamma$ is 
equivalent to the problem of the description of the involution \eqref{z3}. 
As we shall see, the point of view \eqref{z3} offers some advantages. 
Observe that $(\Gamma\calC)^2$ is a \emph{linear} operator: 
$$
(\Gamma\calC)^2=\Gamma \calC \Gamma\calC =\Gamma\Gamma^*.
$$
Thus, we can rephrase our aim as follows: 
we describe the eigenspaces 
$\Ker ((\Gamma\calC)^2-s^2I)$ and the anti-linear involution $s^{-1}\Gamma\calC$ on these eigenspaces.

\subsection{Mapping onto the Hardy space}\label{sec.z2}
Let $\calH^2(\bbT)\subset L^2(\bbT)$ be the standard Hardy space, and 
let $\proj:L^2(\bbT)\to \calH^2(\bbT)$ be the corresponding orthogonal projection (the Szeg\H o projection). 
For $f\in \calH^2(\bbT)$, let $\wh f_j$  be the $j$'th Fourier coefficient:
$$
\wh f_j=\int_0^{2\pi} f(e^{i\theta})e^{-ij\theta}\frac{d\theta}{2\pi}. 
$$
Conversely, for a sequence $\xi\in\ell^2$, we denote by $\wc \xi\in \calH^2(\bbT)$ the function 
$$
\wc \xi(z)=\sum_{j=0}^\infty \xi_j z^j, \quad \abs{z}\leq1.
$$

It is standard to study Hankel operators on the Hardy space, i.e. to consider
an operator on the Hardy space whose matrix with respect to the standard basis
is given by $\{\gamma_{j+k}\}$. 
In the same way, we map the anti-linear operator $\Gamma\calC$ to the Hardy space as follows. 

For a \emph{symbol} $u\in \calH^2(\bbT)$, we consider the anti-linear Hankel operator 
$H_u$, formally defined  by 
\[
H_u f=\proj(u\cdot \overline{f}), \quad f\in \calH^2(\bbT).
\label{z0}
\]
We are only interested in bounded Hankel operators; it is well known that this corresponds
to the symbol $u$ being in the $\BMOA(\bbT)$ class (see e.g. \cite[Theorem 1.2]{Peller}). 
We recall that the shift operator $S$ on $\calH^2(\bbT)$ is defined as
$$
Sf(z)=zf(z),\quad  z\in \bbT\ ,
$$
and that the anti-linear Hankel operators $H_u$ are characterised by the identity 
$$
H_uS=S^*H_u \ . 
$$
We denote by $\1$ the function in $\calH^2(\bbT)$ identically equal to $1$; 
obviously, $u=H_u\1$.

Clearly, the operators $\Gamma\calC$ and $H_u$ with $u=\wc\gamma$ are unitarily 
equivalent by the Fourier transform: 
$$
\wc{\Gamma\calC \xi}=H_{u}\wc \xi, \quad u=\wc \gamma.
$$
In particular, $H_u^2$ is a linear self-adjoint operator on $\calH^2(\bbT)$ which is 
unitarily equivalent to $\Gamma\Gamma^*$. 
Coming back to the description of the Schmidt pairs for $\Gamma$, we can 
summarise the above discussion as follows:
\begin{proposition}\label{prp.a1}
A pair $\{\overline{\xi},\eta\}$ is an $s$-Schmidt pair of $\Gamma=\{\gamma_{j+k}\}_{j,k=0}^\infty$ 
if and only if 
$$
H_{u} \wc \xi =s\wc \eta, \quad \wc\xi \in \Ker (H_u^2-s^2I), \quad u=\wc\gamma. 
$$
\end{proposition}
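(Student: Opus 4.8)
The plan is to transport the two defining relations of an $s$-Schmidt pair from $\ell^2$ to $\calH^2(\bbT)$ by means of the unitary identification $\xi\mapsto\wc\xi$ together with the symmetry identity $\Gamma\calC=\calC\Gamma^*$, and then to recognise the resulting pair of anti-linear equations as equivalent to a single equation together with an eigenvalue condition for $H_u^2$.

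First I would rewrite the two conditions defining the Schmidt pair $\{\overline\xi,\eta\}$, namely $\Gamma\,\overline\xi=s\eta$ and $\Gamma^*\eta=s\,\overline\xi$, purely in terms of the anti-linear operator $\Gamma\calC$. Since $\overline\xi=\calC\xi$, the first reads $\Gamma\calC\xi=s\eta$. For the second I use that $\calC^2=I$ turns $\Gamma\calC=\calC\Gamma^*$ into $\Gamma^*=\calC\Gamma\calC$; substituting this and then applying $\calC$ to both sides, while noting that the anti-linearity of $\calC$ and the reality of $s$ let the scalar pass through, converts $\Gamma^*\eta=s\,\overline\xi$ into the symmetric-looking relation $\Gamma\calC\eta=s\xi$. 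Thus the Schmidt-pair conditions become the system $\Gamma\calC\xi=s\eta$ and $\Gamma\calC\eta=s\xi$.

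Next I would apply the isometric isomorphism $\zeta\mapsto\wc\zeta$ and invoke the stated intertwining $\wc{\Gamma\calC\zeta}=H_u\wc\zeta$ (with $u=\wc\gamma$) to each equation; since $\zeta\mapsto\wc\zeta$ is a linear isometric isomorphism, each identity is preserved and becomes $H_u\wc\xi=s\wc\eta$ and $H_u\wc\eta=s\wc\xi$ respectively. It then remains to show that this pair is equivalent to $H_u\wc\xi=s\wc\eta$ together with $\wc\xi\in\Ker(H_u^2-s^2I)$. In one direction, composing the two equations and using that $H_u$ is anti-linear while $s>0$ is real gives $H_u^2\wc\xi=s\,H_u\wc\eta=s^2\wc\xi$. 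Conversely, given $H_u\wc\xi=s\wc\eta$ and $H_u^2\wc\xi=s^2\wc\xi$, one recovers the missing equation by writing $\wc\eta=s^{-1}H_u\wc\xi$ and applying $H_u$, once more passing the real factor $s^{-1}$ through the anti-linear $H_u$.

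The computations are short, so the only point demanding care — and the step I would treat as the genuine crux — is the bookkeeping of conjugations: correctly deriving $\Gamma\calC\eta=s\xi$ from $\Gamma^*\eta=s\,\overline\xi$, and tracking that the anti-linearity of $H_u$ lets the real scalars $s$ and $s^{-1}$ commute past it, so that $H_u^2$ is genuinely linear and the eigenvalue $s^2$ appears. Once these conjugation points are pinned down the equivalence is immediate, and no boundedness or analyticity input beyond the already-established unitary equivalence is required.
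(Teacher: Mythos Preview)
Your proposal is correct and follows essentially the same route as the paper: the proposition is presented there as a direct summary of the preceding discussion, which establishes $\Gamma\calC=\calC\Gamma^*$, $(\Gamma\calC)^2=\Gamma\Gamma^*$, and the unitary intertwining $\wc{\Gamma\calC\zeta}=H_u\wc\zeta$; you have simply written out the straightforward algebraic details that the paper leaves implicit.
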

Thus, our aim is to describe the eigenspaces 
$$
E_{H_u}(s):=\Ker(H_u^2-s^2I)\subset \calH^2(\bbT), \quad s>0,
$$
and the action of $H_u$ on these eigenspaces. 
For most of the paper, the symbol $u$ is fixed and so we write $E_{H}(s)$ in place of $E_{H_u}(s)$
when there is no danger of confusion.

In Appendix~\ref{sec.app}, we also discuss the realisation of  the linear operator
$\Gamma$ (rather than the anti-linear operator $\Gamma\calC$) in the Hardy space.

\subsection{The Adamyan-Arov-Krein theorem}\label{sec.z3}

We recall the classical Adamyan-Arov-Krein (AAK) theorem. 
Below $\{s_k(\Gamma)\}_{k=1}^\infty$ is  the ordered sequence of approximation numbers of $\Gamma$, i.e. 
$$
s_k(\Gamma)=\inf\{\norm{\Gamma-F}: \rank F\leq k\},
$$
where the infimum is taken over all linear operators of rank less than or equal to $k$. 
Further, $s_\infty(\Gamma)=\lim_{k\to\infty}s_k(\Gamma)$ is the essential norm of $\Gamma$. 
It is well-known that if $s=s_k(\Gamma)>s_\infty(\Gamma)$, then $s$ is a singular value of $\Gamma$, i.e.
$\Ker(\Gamma^*\Gamma-s^2I)\not=\{0\}$; furthermore, in this case this space is finite-dimensional.

Similarly, for a bounded anti-linear Hankel operator $H_u$, we denote 
$$
s_k(H_u)=\inf\{\norm{H_u-F}: \rank F\leq k, \text{$F$ anti-linear}\}, 
$$
where the infimum is taken over all anti-linear operators of rank less than or equal to $k$, and set 
$s_\infty(H_u)=\lim_{k\to\infty}s_k(H_u)$.

Adapting the notation of the AAK theorem to our setting (see Proposition~\ref{prp.a1}), we quote it as follows. 

\begin{theorem}\cite[Theorem 1.2]{AAK3}
Let $s>0$ be a singular value of a bounded Hankel operator $H_u$. 
Then the functions $f\in E_{H}(s)$, $g=H_u(f)/s$ 
can be represented as
\[
f(z)=I_+(z)I(z)O(z), \quad 
g(z)=I_-(z)I(z)O(z), \quad 
\label{aak1}
\]
where $O$ is an outer function, and $I$, $I_+$ and $I_-$ are inner functions. 
The inner function $I$ is independent of the choice of
$f\in E_H(s)$.
Further, we have
\begin{enumerate}[\rm (i)]
\item
$\deg I_++\deg I_-\leq n-1$, where $n=\dim E_H(s)$.
If $n$ is finite, then all pairs $(f,g)$ can be described by the formula 
\[
f(z)=P(z)I(z)\Omega(z), \quad
g(z)=z^{n-1}\overline{P(z)}I(z)\Omega(z), \quad \abs{z}=1,
\label{aak2}
\]
where  $P$ is an arbitrary polynomial of degree less than or equal to $n-1$,
and $\Omega$ is an outer function which is uniquely defined by $H_u$ 
if we require that $\Omega(0)>0$.
\item
If $s=s_k(H_u)$ and 
$$
s_{k-1}(H_u)>s_k(H_u)\geq s_\infty(H_u),
$$ 
then the degree of $I$ is $k$. Further, in this case there exists a rational function $r$ with 
no poles in $\overline{\bbD}$ such that the rank of $H_r$ is $k$ and 
$$
\norm{H_u-H_r}=s.
$$
\end{enumerate}
\end{theorem}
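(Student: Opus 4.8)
The plan is to read the Adamyan--Arov--Krein theorem off the structural description announced in the Overview, namely $E_H(s)=p\calK_{z\theta}$ with $p$ an isometric multiplier on the model space $\calK_{z\theta}$, together with the explicit description of the action of $H_u$ on $E_H(s)$. The analytic heart is the pointwise identity $\abs{f}=\abs{g}$ a.e. on $\bbT$, which I would derive directly: writing $u\overline f=sg+\overline z\,\overline G$ and $u\overline g=sf+\overline z\,\overline F$ with $F,G\in\calH^2(\bbT)$ (the splitting into analytic and anti-analytic parts), multiplying the first identity by $\overline g$ and the second by $\overline f$ and subtracting gives $s(\abs{g}^2-\abs{f}^2)=\overline z\,\overline{Ff-Gg}$. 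The left-hand side is real while the right-hand side has only strictly negative Fourier frequencies, so both vanish and $\abs{f}=\abs{g}$. In particular $f$ and $g$ share a common outer factor $O$, the first ingredient of \eqref{aak1}.

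For the factorisation itself I would factor the multiplier as $p=I\,O_p$ into inner and outer parts. Since $\calK_{z\theta}$ contains $\1$, an outer function, the greatest common inner divisor of $\calK_{z\theta}$ is trivial, so the greatest common inner divisor of $E_H(s)=p\calK_{z\theta}$ is exactly $I$; this makes $I$ independent of $f$, as claimed. Writing $f=pk$ with $k\in\calK_{z\theta}$ and $k=I_+O_k$ in inner--outer form gives $f=I_+\,I\,O$ with $O=O_pO_k$. The explicit description of the action of $H_u$ realises $g=H_uf/s$, under $f=pk\mapsto k$, as $p$ applied to the canonical conjugate $z\theta\,\overline{zk}$ of $k$ in $\calK_{z\theta}$; since conjugation preserves moduli, the outer part of $g$ is again $O$ and its inner part is some inner $I_-$, giving $g=I_-\,I\,O$. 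This is \eqref{aak1}.

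To prove (i) I would use that when $n=\dim E_H(s)$ is finite the injective multiplier forces $\dim\calK_{z\theta}=n$, so $z\theta$ is a Blaschke product of degree $n$ and $\calK_{z\theta}$ consists of the functions $P/Q$, where $Q$ is the fixed denominator of $z\theta$ and $\deg P\leq n-1$. The conjugation sends $P/Q$ to $\widetilde P/Q$ with $\widetilde P(z)=z^{n-1}\overline{P(1/\overline z)}$, whose zeros in $\bbD$ are the reflections of the zeros of $P$ lying outside $\overline{\bbD}$. Thus $\deg I_+$ counts the zeros of $P$ in $\bbD$ and $\deg I_-$ counts those outside $\overline{\bbD}$, whence $\deg I_++\deg I_-\leq\deg P\leq n-1$. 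Absorbing $Q$ and $O_p$ into a single outer function $\Omega$ then turns $f=pk$ into $f=PI\Omega$ and $g$ into $z^{n-1}\overline{P}\,I\Omega$ on $\bbT$, which is \eqref{aak2}, with $\Omega(0)>0$ after a unimodular normalisation.

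Finally, (ii) rests on the last assertion of the structural theorem: $\deg I=\deg(\text{inner part of }p)$ equals the total multiplicity of the spectrum of $H_u^2$ in $(s^2,\infty)$. Now $s=s_k(H_u)$ is by definition the error of the best anti-linear approximation of rank at most $k$, hence the $(k+1)$-st singular value counted with multiplicity; the hypothesis $s_{k-1}>s_k$ means exactly that $k$ singular values lie strictly above $s$, so the multiplicity of $H_u^2$ in $(s^2,\infty)$ is $k$ and $\deg I=k$. For the optimal approximant I would form the constant-modulus symbol attached to a Schmidt pair, $w=s\,g/\overline f$ (of modulus $s$ by the identity above), and set $r:=u-\proj w$ with $\proj$ the Szeg\H o projection, so that $H_u-H_r=H_w$ has norm $s$; the rank of $H_r$ equals the number of poles of $r$ outside $\overline{\bbD}$, which Kronecker's theorem ties to $\deg I=k$. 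I expect this last point --- showing that $r$ is a genuine symbol independent of the chosen pair, that $\rank H_r$ is exactly $k$ rather than merely $\leq k$, and matching the Blaschke degree count with the approximation index --- to be the main obstacle; the rest is bookkeeping built on the structural theorem and the modulus identity.
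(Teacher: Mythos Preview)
This theorem is not proved in the paper; it is quoted from \cite{AAK3} as background for comparison with the paper's main result, and no proof environment is attached to it. So there is no ``paper's own proof'' to compare against.

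That said, your strategy---reading the AAK factorisations off the structural Theorem~\ref{thm.z0}---is exactly the derivation the paper signals (without carrying it out) in the Remarks following Theorem~\ref{thm.z0}: identify $I$ with the inner factor $\varphi$ of the multiplier $p$, recognise the action \eqref{z9} inside formula \eqref{aak2}, and obtain $\deg I$ from part~(ii). Your direct argument for $\abs{f}=\abs{g}$ a.e.\ is correct and is essentially the content of the paper's Lemma~\ref{lma.aab2}, though once you invoke Theorem~\ref{thm.z0}(i) it becomes redundant: the explicit action $H_u(pk)=sp\,\theta\overline{k}$ on $\Ran H_\theta$ already gives $\abs{g}=\abs{p}\abs{\theta\overline{k}}=\abs{p}\abs{k}=\abs{f}$ pointwise. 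Your treatment of part~(i) via the finite Blaschke description of $\calK_{z\theta}$ and the conjugation $k\mapsto\theta\overline{k}$ is correct and matches the paper's discussion around \eqref{z9}.

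The worry you flag in your final paragraph---that $\rank H_r$ be exactly $k$ and that the symbol be genuinely rational---is legitimate, and it is precisely what the paper's Section~\ref{sec.aab} settles in proving Theorem~\ref{thm.z0}(ii): the chain of lemmas there ends with the equality $\deg\varphi=n(s;H_u)=\rank H_{u-v}$, where $v=sP(\phi)$ is exactly your $Pw$. Since $u-v\in\calH^2$ and $H_{u-v}$ has finite rank, Kronecker's theorem makes $u-v$ rational with no poles in $\overline{\bbD}$. So the ``main obstacle'' dissolves once Theorem~\ref{thm.z0}(ii) is in hand with its full proof; you are not missing an idea, only citing a result proved later in the paper.
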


\subsection{Inner symbols and model spaces}\label{sec.z4}

We recall that for an inner function $\theta$ on the unit disk, the model space $\calK_\theta$ 
is defined by 
$$
\calK_\theta=\calH^2(\bbT)\cap(\theta\calH^2(\bbT))^\perp. 
$$
By Beurling's theorem, any proper $S^*$-invariant subspace of $\calH^2(\bbT)$ 
is a model space.

The basic building blocks of our construction are the Hankel operators $H_u$ whose
symbols are inner functions. 
In this case, as is well known (and easy to prove, see e.g. \cite[Theorem 1.2.6]{Peller}), 
$H_u^2$ is an orthogonal projection whose range is a model space. 
We state this precisely for clarity:  
\begin{lemma}\label{lma.a2}
Let $\theta$ be an inner function. Then $H_\theta^2$ is an orthogonal projection in 
$\calH^2(\bbT)$, with $\Ran H_\theta^2=\Ran H_\theta=\calK_{z\theta}$ described by 
$$
f\in \Ran H_\theta \Leftrightarrow f\in \calH^2(\bbT) \text{ and } \theta\overline{f}\in \calH^2(\bbT). 
$$
Further, $H_\theta$ acts on $\Ran H_\theta$ as an anti-linear involution,
$$
H_\theta f=\theta\overline{f}, \quad f\in \Ran H_\theta. 
$$
\end{lemma}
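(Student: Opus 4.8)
The plan is to reduce everything to the single algebraic fact that $\theta$ is unimodular on $\bbT$. Writing $\proj$ for the Szeg\H o projection and recalling $H_\theta f=\proj(\theta\overline f)$, the point is that the anti-linear map $f\mapsto\theta\overline f$ is an anti-unitary \emph{involution} on all of $L^2(\bbT)$, since $\theta\overline{\theta\overline f}=\theta\overline\theta f=f$ almost everywhere on $\bbT$. All four assertions will be obtained by transporting this involution through the projection $\proj$ while keeping track of frequency supports.

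First I would prove the range characterization $f\in\Ran H_\theta\iff f\in\calH^2(\bbT)$ and $\theta\overline f\in\calH^2(\bbT)$. For the implication $(\Leftarrow)$, given such an $f$ I set $g=\theta\overline f\in\calH^2(\bbT)$ and compute $H_\theta g=\proj(\theta\overline g)=\proj(\theta\overline\theta f)=\proj f=f$; this shows $f\in\Ran H_\theta$ and at the same time yields the explicit formula $H_\theta f=\theta\overline f$ whenever $\theta\overline f\in\calH^2(\bbT)$. For $(\Rightarrow)$, I write $f=H_\theta h=\theta\overline h-w$ with $w=(I-\proj)(\theta\overline h)\in L^2(\bbT)\ominus\calH^2(\bbT)$; then $\theta\overline f=h-\theta\overline w$, and since $\overline w\in\calH^2(\bbT)$ while $\theta\in\calH^\infty$ one gets $\theta\overline w\in\calH^2(\bbT)$, whence $\theta\overline f\in\calH^2(\bbT)$.

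Next I would match this description with the model space. Using the elementary equivalence $f\perp\phi\calH^2(\bbT)\iff\overline\phi f\in L^2(\bbT)\ominus\calH^2(\bbT)$ for inner $\phi$, applied with $\phi=z\theta$ and followed by multiplication by $z$, I would show that $f\in\calK_{z\theta}$ is equivalent to $f\in\calH^2(\bbT)$ and $\theta\overline f\in\calH^2(\bbT)$ --- exactly the range condition just obtained. Hence $\Ran H_\theta=\calK_{z\theta}$. For the operator statements, if $f\in\Ran H_\theta$ then $H_\theta f=\theta\overline f$ lies again in the range (because $\theta\overline{\theta\overline f}=f\in\calH^2(\bbT)$), so $H_\theta^2 f=\theta\overline{\theta\overline f}=f$; thus $H_\theta$ is an anti-linear involution on its range. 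Since $H_\theta^2$ maps $\calH^2(\bbT)$ into $\Ran H_\theta$ and restricts to the identity on $\Ran H_\theta$, it is idempotent with $\Ran H_\theta^2=\Ran H_\theta$; being moreover self-adjoint (as already noted in Section~\ref{sec.z2}), it is the orthogonal projection onto $\calK_{z\theta}$.

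The only step that needs genuine care is the implication $(\Rightarrow)$ in the range characterization, where one must verify $\theta\overline w\in\calH^2(\bbT)$ by controlling the frequency support of the anti-analytic part $w$, together with the parallel bookkeeping of the shift by $z$ that relates the condition $\theta\overline f\in\calH^2(\bbT)$ to membership in $\calK_{z\theta}$ rather than $\calK_\theta$. Everything else is formal manipulation built on $\abs\theta=1$.
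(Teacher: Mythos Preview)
Your argument is correct. The paper does not actually prove this lemma; it states it as well known and refers to \cite[Theorem 1.2.6]{Peller}, so there is no proof in the paper to compare against. What you have written is precisely the standard computation: the key points are that $\abs{\theta}=1$ a.e.\ makes $f\mapsto\theta\overline f$ an anti-linear isometric involution on $L^2(\bbT)$, and that $w\in L^2\ominus\calH^2$ forces $\overline w\in z\calH^2\subset\calH^2$, so $\theta\overline w\in\calH^2$. The identification $\Ran H_\theta=\calK_{z\theta}$ via the equivalence $f\perp z\theta\calH^2\iff \overline{z\theta}f\in L^2\ominus\calH^2\iff \theta\overline f\in\calH^2$ is exactly right, and the idempotence and self-adjointness of $H_\theta^2$ then give the orthogonal projection statement.
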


Observe that we have $\theta,\1\in \Ran H_\theta$ for any inner function $\theta$. 

To illustrate Lemma~\ref{lma.a2}, let us discuss the case of inner functions of finite degree 
(the degree of $\theta$ is  defined as $\dim\calK_{\theta}$). 
An inner function $\theta$ has a finite degree $k$ if and only if it is given by a finite Blaschke product
\[
\theta(z)=e^{i\alpha}\prod_{j=1}^k\frac{z-z_j}{1-\overline{z_j}z}, \quad \abs{z_j}<1. 
\label{z8}
\]
In this case, the model space $\Ran H_{\theta}$ can be easily described: 
$$
\Ran H_{\theta}=\{P(z)/D(z),\quad P\in\bbC_{k}[z]\},
$$
where $\bbC_{k}[z]$ is the space of all polynomials of degree $\leq k$ and 
$D(z)$ is the denominator in \eqref{z8}, i.e. 
$$
D(z)=\prod_{j=1}^k(1-\overline{z_j}z).
$$
The action of $H_\theta$ on $\Ran H_{\theta}$ in this case is given by 
\[
H_\theta: 
P(z)/D(z)\mapsto e^{i\alpha}z^k\overline{P(z)}/D(z), \quad z\in \bbT. 
\label{z9}
\]

\subsection{Isometric multipliers and weighted model spaces}\label{sec.z5}

The following notion will be used throughout the paper. 
It appeared in the literature in connection with the characterization of kernels of Toeplitz operators, see
\cite{Hitt,Hayashi,Sarason}. 
Let $F\subset\calH^2(\bbT)$ be a closed subspace; 
a holomorphic function $p$ in the unit disk  is called an {\it  isometric  multiplier} on $F$, 
if for every $f\in F$, the product  $pf$ belongs to $\calH^2(\bbT)$ 
and
\[
\norm{pf}=\norm{f}.
\label{z10}
\]
In this case, we write 
$$
pF:=
\{pf: f\in F\}.
$$
We will mostly use this notion for $F=\calK_\theta$, where $\theta$ is an inner function.
In this case, we will call $p\calK_\theta$ a \emph{weighted model space}. 
(This is not a standard piece of terminology; the adjective \emph{weighted} often implies
the choice of a norm different from the standard one, but here in fact the norm is unchanged.)

Observe that if $\theta(0)=0$, then  $\1\in\calK_\theta$, and therefore in this case we necessarily have
$p\in\calH^2(\bbT)$ and $\norm{p}=1$. 
In \cite{Sarason}, 
Sarason characterized all isometric multipliers of a given model space $\calK_\theta$ with $\theta(0)=0$, 
proving that these are the functions $p\in \calH^2(\bbT)$ of norm one, of the form
\[
p(z)=\frac{a(z)}{1- \theta (z)b(z)}\ ,
\label{z10ab}
\]
where $a, b\in \calH^\infty$ are such that $|a|^2+|b|^2=1$ almost everywhere on the unit circle. 

\begin{remark}\label{rmk.z3}
Let $M=p\Ran H_\theta$, where $p$ is an isometric multiplier on $\Ran H_\theta=\calK_{z\theta}$. 
Then the parameters $p$ and $\theta$ in this representation for $M$ 
are uniquely defined up to unimodular multiplicative constants. 
This is an easy fact, which follows from Theorem~\ref{thm.z2a}(ii) below. 
\end{remark}

\subsection{Main result: the structure of  Schmidt subspaces}\label{sec.z6}

For a symbol $a\in L^\infty(\bbT)$, we denote by $T_a$ the Toeplitz operator in $\calH^2(\bbT)$,
defined by $T_a f=P(a f)$. In fact, below we use this notation also for unbounded
symbols, but due to the isometricity of the corresponding multipliers, the resulting
Toeplitz operators turn out to be bounded on relevant domains.

The following theorem is the main result of the paper. 
We recall that the total multiplicity of the spectrum of a self-adjoint operator $A$ in an interval $\Delta$ 
is the rank of the spectral projection $\mathcal E_A(\Delta)$. 
\begin{theorem}\label{thm.z0}
Let $u\in\BMOA(\bbT)$, let $H_u$ be the anti-linear Hankel operator \eqref{z0}, and 
let $s>0$ be a singular value of $H_u$, i.e. $E_H(s)\not=\{0\}$. 
\begin{enumerate}[\rm (i)]
\item
There exists an inner function $\theta$ and an isometric multiplier $p\in\calH^2(\bbT)$ 
on $\Ran H_\theta$ 
such that 
\[
E_H(s)=p\Ran H_\theta.
\label{z10a}
\]
The functions $p$ and $\theta$  can be chosen such that
$$
H_u(p)=s\theta p,
$$
in which case  we have
$$
H_u T_p=sT_pH_\theta \quad \text{ on $\Ran H_\theta$.}
$$
\item
Let $\varphi$ be the inner factor of $p$.  Then 
we have $$s=\norm{H_u|_{\varphi\calH^2}}$$ and 
the degree of $\varphi$ equals the total multiplicity of the spectrum of the self-adjoint operator
$\abs{H_u}=\sqrt{H_u^2}$ in the open interval $(s,\infty)$. 
\end{enumerate}
\end{theorem}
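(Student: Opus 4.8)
The plan is to extract the structure in two conceptually separate stages. First I would identify the model-space skeleton of the eigenspace $E_H(s)$, and only then peel off the isometric multiplier $p$; finally I would read off the multiplicity count in part (ii) from the inner factor of $p$.

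\medskip

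The key to the whole argument is the shift structure that underlies a Schmidt subspace. Recall from Section~\ref{sec.z1} that on $E_H(s)=\Ker(H_u^2-s^2I)$ the anti-linear map $J:=s^{-1}H_u$ is an involution. The first step is to understand how $E_H(s)$ interacts with the shift $S$ and its adjoint $S^*$. Using the intertwining identity $H_uS=S^*H_u$, I would show that $E_H(s)$ is \emph{nearly $S^*$-invariant} in a suitable sense: if $f\in E_H(s)$ and $f(0)=0$, then $S^*f\in E_H(s)$. The motivating model is Lemma~\ref{lma.a2}, where $E_H(s)=\calK_{z\theta}$ exactly. For general $u$, I expect to produce a canonical element $p\in E_H(s)$ (the natural candidate being an appropriately normalised extremal vector, so that $p(0)>0$ and $\norm{p}=1$) and to define $\theta$ implicitly through the relation $H_u(p)=s\theta p$ that the theorem asserts. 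The delicate point is to verify that the $\theta$ so defined is genuinely inner; this should follow by computing $\abs{\theta}$ on $\bbT$ from $\abs{H_u p}=\abs{s\theta p}$ together with the isometric property $\abs{p}$ forces, using that $p$ is a norm-one extremal vector so that $\abs{H_up}=s\abs{p}$ pointwise on the circle.

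\medskip

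Once $p$ and $\theta$ are in hand, the second step is to show $T_p:\Ran H_\theta\to E_H(s)$ is a surjective isometry, which simultaneously establishes \eqref{z10a} and the intertwining relation $H_uT_p=sT_pH_\theta$. Isometricity of $T_p$ on $\Ran H_\theta=\calK_{z\theta}$ is precisely the statement that $p$ is an isometric multiplier, so I would verify \eqref{z10} on $\calK_{z\theta}$; surjectivity onto the whole eigenspace is where the near-invariance from the first step does the work, letting me build up every $f\in E_H(s)$ from $p$ by applying powers of $S^*$ and using the involution $J$ to close the construction. The intertwining $H_uT_p=sT_pH_\theta$ should then be a direct computation from $H_u(p)=s\theta p$ combined with $H_u S=S^*H_u$ and the explicit action $H_\theta f=\theta\overline f$ on $\Ran H_\theta$ from Lemma~\ref{lma.a2}. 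The main obstacle I anticipate is exactly this surjectivity/closedness argument in the \emph{non-compact} setting: when $s$ is not isolated in the spectrum, $E_H(s)$ may be infinite-dimensional and $\theta$ may be an infinite Blaschke product or have a singular inner part, so one cannot argue by dimension count or by reducing to finite Blaschke products as in \cite{GGAst}. I would handle this by working with the graph of $T_p$ and an abstract closed-subspace argument rather than any explicit basis.

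\medskip

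For part (ii), write $p=\varphi O$ with $\varphi$ the inner factor and $O$ outer. The claim $s=\norm{H_u|_{\varphi\calH^2}}$ I would obtain by restricting $H_u$ to the invariant-type subspace $\varphi\calH^2$ and using the factorisation to compare $H_u$ on $\varphi\calH^2$ with $H_\theta$ on $\Ran H_\theta$, where the norm is visibly governed by $s$; the reverse bound follows because $p\in\varphi\calH^2$ is itself an $s$-singular vector. The degree count is the part I would treat last and expect to be the subtlest: I would relate $\deg\varphi$ to the total multiplicity of $\abs{H_u}$ in $(s,\infty)$ by an AAK-type comparison, showing that each factor of $\varphi$ corresponds to a singular value of $H_u$ strictly above $s$. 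Concretely, I would argue that $\varphi\calH^2$ is a maximal subspace on which $\norm{H_u|_{\varphi\calH^2}}=s$, so its orthocomplement $\calK_\varphi$ carries exactly the spectral mass of $\abs{H_u}$ in $(s,\infty)$; since $\dim\calK_\varphi=\deg\varphi$, this yields the multiplicity identity. The technical heart here is the maximality statement, which I expect to prove by the same extremal-vector analysis used in Theorem in Section~\ref{sec.z3}, but now phrased in the language of weighted model spaces so that it applies without the isolated-eigenvalue hypothesis.
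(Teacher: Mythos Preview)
Your outline has a genuine gap in part (i): the claim that $E_H(s)$ is nearly $S^*$-invariant in the standard sense (``$f\in E_H(s)$, $f(0)=0$ $\Rightarrow$ $S^*f\in E_H(s)$'') is \emph{false} in general, and correspondingly there need not exist any $p\in E_H(s)$ with $p(0)>0$. The obstruction is precisely the dichotomy the paper isolates in Lemma~\ref{lma.z1}. When $u\not\perp E_H(s)$ (the ``$H$-dominant'' case) your approach works: one checks from $H_uS=S^*H_u$ and the rank-one identity $K_u^2=H_u^2-(\cdot,u)u$ that $E_H(s)\cap\1^\perp=S(E_H(s)\cap u^\perp)$, so $E_H(s)$ is nearly $S^*$-invariant and Hitt's theorem applies with $p=\1_s/\norm{\1_s}$. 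But when $u\perp E_H(s)$ (the ``$K$-dominant'' case) one has $(g,\1)=s^{-2}(H_u^2g,\1)=s^{-2}(u,H_ug)=0$ for every $g\in E_H(s)$, so $E_H(s)\perp\1$ and your extremal $p$ with $p(0)>0$ does not exist. If $E_H(s)$ were then nearly $S^*$-invariant it would be genuinely $S^*$-invariant, hence a model space $\calK_\psi$; but that forces the isometric multiplier $p$ in $E_H(s)=p\Ran H_\theta$ to be outer (Crofoot), contradicting $p(0)=0$. So the argument breaks down, and one must do something else.

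The paper's fix is to introduce the shifted Hankel operator $K_u=S^*H_u$ and, in the $K$-dominant case, run the whole nearly-invariant argument on $E_K(s)$ instead of $E_H(s)$: one shows $E_K(s)\cap u^\perp=S(E_K(s)\cap(K_uu)^\perp)$, applies a general structural result (Theorem~\ref{thm.z2}) with $p=\wt u_s/\norm{\wt u_s}$ to get $E_K(s)=\wt u_s\Ran H_{\wt\psi_s}$, and then reads off $E_H(s)=E_K(s)\cap u^\perp=\wt u_s S\Ran K_{\wt\psi_s}$, which after a Frostman shift becomes $p\Ran H_\theta$ with $p(0)=0$. Without this second operator your argument cannot see the $K$-dominant case at all. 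A smaller issue: your sketch for why $\theta$ is inner implicitly assumes $\theta=H_u(p)/(sp)$ lies in $\calH^\infty$, which is not automatic from the pointwise modulus computation; the paper handles this separately via a reproducing-kernel inequality (Theorem~\ref{thm.z2}(i)) showing $\abs{q(\zeta)}\le\abs{p(\zeta)}$ for all $\zeta\in\bbD$. Your outline for part (ii) is roughly in the right spirit (it is an AAK-type comparison), though the actual argument proceeds via the unimodular symbol $\phi=H_uf/(s\overline f)$ and an auxiliary Hankel operator $H_w=H_uT_\varphi$ rather than a direct maximality statement.
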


\begin{remark}
\begin{enumerate}
\item
Equation $H_u(p)=s\theta p$ is simply a normalisation condition on the unimodular
constants in the definition of $p$, $\theta$ (see Remark~\ref{rmk.z3}). 
\item
Part (i) of the theorem shows that the parameters $p$, $\theta$ describe both the structure of the Schmidt subspace $E_H(s)$ 
and the action of $H_u$ on this subspace. 
In fact, the action of $H_u$ on $E_H(s)$ is reduced to the 
simple action of $H_{\theta}$. 
\item
Theorem~\ref{thm.z0}(i)
extends part (i) of the AAK theorem in two directions. 
Firstly, Theorem~\ref{thm.z0}(i) applies to both finite and infinite dimensional 
subspaces. Secondly, it provides the isometry of the corresponding  multipliers. 
\item
The inner function $I$ in \eqref{aak1} can be identified with the inner factor $\varphi$ of $p$ 
in the representation \eqref{z10a}. 
Further, we recognise the action \eqref{z9} in formula \eqref{aak2}. 
\item 
For compact $H_u$, part (i) of the theorem 
in a less explicit form
(and without the isometry property \eqref{z10}) 
appeared earlier in \cite{GGAst}. 
Of course, for compact $H_u$, all Schmidt subspaces are finite dimensional; the 
action of $H_u$ was expressed in the form \eqref{z9} in \cite{GGAst}. 
The proof in \cite{GGAst} heavily relied on compactness and on the 
use of the AAK theory. The proof we give in this paper is both simpler and 
more general.  
\item
If $s=\norm{H_u}$, part (ii) of the theorem says that the degree of $\varphi$ is zero, i.e. $p$ is
an outer function. 
If $\abs{H_u}$ has infinitely many eigenvalues or some essential spectrum in the interval $(s,\infty)$, 
it says that the degree of $\varphi$ is infinite. 

\item
Our proof of part (i) of the theorem is independent from the AAK theory. 
It involves the analysis of the Schmidt subspaces of both $\Gamma$ and $\wt \Gamma$ and 
also borrows some elements  from the theory of nearly $S^*$-invariant subspaces. 
On the other hand, the proof of part (ii) is essentially the adaptation of the original AAK argument
to the language of weighted model spaces, with some simplifications. 
\end{enumerate}
\end{remark}

\begin{remark}\label{rmk.z7}
A natural question is whether any subspace of the form $p\Ran H_{\theta}$ 
can appear as a Schmidt subspace of a bounded Hankel operator.
Let $p=\varphi p_0$ be the inner-outer factorisation of $p$; thus, a subspace 
$p\Ran H_\theta$ is characterised by the triple $\theta$, $\varphi$, $p_0$. 
In Section~\ref{sec.example} we show that any  pair of inner functions $\theta$, $\varphi$ can appear
in a description of a Schmidt subspace of a bounded Hankel operator. 
We don't know whether there are any 
restrictions on the outer factor $p_0$, apart from Sarason's condition \eqref{z10ab}.
\end{remark}

\subsection{The self-adjoint case}\label{sec.z7}
Here we consider the case of selfadjoint Hankel operators and describe the corresponding eigenspaces. 
Clearly,  a Hankel matrix $\Gamma$ is self-adjoint if and only if all coefficients $\gamma_j$ are real. 
Thus, we  consider the Hankel operators $H_u$ with the symbol $u=\wc \gamma$ having real Fourier coefficients. 
Then $H_u$ maps the space of functions $\calH^2_{\rm real}$ with \emph{real} Fourier
coefficients to itself, and the restriction of $H_u$ onto this space is unitarily equivalent to 
$\Gamma$ acting on the $\ell^2$ space of real sequences. 
The following corollary characterizes the eigenspaces of this restriction associated to non-zero eigenvalues.
\begin{corollary}[the selfadjoint case]\label{cor.z0}
Consider $H_u$ with $u\in\BMOA(\bbT)\cap\calH^2_{\rm real}$ acting on $\calH^2_{\rm real}$. 
 Let $\lambda \in \bbR\setminus \{ 0\} $ be an eigenvalue of $H_u$. Then there exists an inner function $\theta \in \calH^2_{\rm real}$ and an isometric multiplier $p \in \calH^2_{\rm real}$ on $\Ran H_\theta $ such that
 $$\Ker (H_u-\lambda I)=\{ pf : f\in \Ran H_\theta \cap \calH^2_{\rm real}\ ,\ H_\theta f= f\} \ .$$
\end{corollary}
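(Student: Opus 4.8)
The plan is to deduce the corollary from Theorem~\ref{thm.z0}(i) by exploiting the conjugation symmetry that the reality of $u$ forces on the whole picture. Let $C$ denote the anti-linear map on $\calH^2(\bbT)$ that conjugates Fourier coefficients, $C\bigl(\sum_j a_j z^j\bigr)=\sum_j\overline{a_j}z^j$; it is an anti-unitary involution whose fixed-point set is exactly $\calH^2_{\rm real}$, it is multiplicative, $C(fg)=(Cf)(Cg)$, and it sends inner functions to inner functions. Since $u\in\calH^2_{\rm real}$, a direct computation on Fourier coefficients (using $\gamma_j\in\bbR$) gives $CH_u=H_uC$; consequently $C$ commutes with the linear operator $H_u^2$ and therefore preserves each Schmidt subspace $E_H(s)=\Ker(H_u^2-s^2I)$.

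First I would apply Theorem~\ref{thm.z0}(i) with $s=\abs{\lambda}$ to obtain an inner function $\theta$ and an isometric multiplier $p$ on $\Ran H_\theta$ with $E_H(s)=p\Ran H_\theta$, normalised so that $H_u(p)=s\theta p$, and hence $H_u(pf)=s\,p\,H_\theta f$ for all $f\in\Ran H_\theta$. Applying $C$ to the identity $E_H(s)=p\Ran H_\theta$ and using that $C$ is multiplicative and maps $\Ran H_\theta=\calK_{z\theta}$ onto $\Ran H_{C\theta}=\calK_{zC\theta}$ (note $Cz=z$), I obtain a second representation $E_H(s)=(Cp)\Ran H_{C\theta}$ of the same subspace, which, by applying $C$ to the normalisation, also satisfies $H_u(Cp)=s(C\theta)(Cp)$. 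By the uniqueness of such a representation up to unimodular constants (Remark~\ref{rmk.z3}), $Cp=\beta p$ and $C\theta=\overline{\beta}^2\theta$ for some $\abs{\beta}=1$. Replacing $p$ by $\beta^{1/2}p$ and correspondingly $\theta$ by $\overline{\beta}\,\theta$ then makes both $p$ and $\theta$ fixed by $C$, i.e.\ $p,\theta\in\calH^2_{\rm real}$, while preserving the representation and the normalisation.

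Next I would fix the sign. Writing $\eps=\lambda/\abs{\lambda}$ and replacing $\theta$ by the real inner function $\eps\theta$ (which leaves $\Ran H_\theta$ unchanged and only flips the sign of $H_\theta$), the action relation becomes $H_u(pf)=\lambda\,p\,H_\theta f$. Since $p$ is an isometric, hence injective, multiplier and $\lambda\neq0$, for $f'=pf\in E_H(s)$ the eigenvalue equation $H_uf'=\lambda f'$ is equivalent to $H_\theta f=f$. This identifies the real-linear subspace $\{f'\in E_H(s):H_uf'=\lambda f'\}$ with $\{pf:f\in\Ran H_\theta,\ H_\theta f=f\}$.

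Finally I would intersect with $\calH^2_{\rm real}$. For $f'=pf$ with $p$ real one has $Cf'=p\,Cf$, so injectivity of the multiplier gives $f'\in\calH^2_{\rm real}\iff Cf=f\iff f\in\calH^2_{\rm real}$; here $C$ preserves $\Ran H_\theta$ because $\theta$ is real. Since $\Ker(H_u-\lambda I)$ for $H_u$ acting on $\calH^2_{\rm real}$ is precisely $\{f'\in\calH^2_{\rm real}:H_uf'=\lambda f'\}$, and every such $f'$ lies in $E_H(s)$ (as $H_uf'=\lambda f'$ forces $H_u^2f'=\lambda^2f'$), combining the last two descriptions yields the asserted formula. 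The main obstacle is the middle step, namely arranging that $\theta$ and $p$ can be taken real; this rests on the multiplicativity and anti-unitarity of $C$ together with the uniqueness of the representation $p\Ran H_\theta$ up to unimodular constants. Once reality is secured, the sign adjustment $\theta\mapsto\eps\theta$ and the passage to $\calH^2_{\rm real}$ are routine.
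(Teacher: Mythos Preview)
Your argument is correct and follows the same overall route as the paper: apply Theorem~\ref{thm.z0}(i), arrange that $p$ and $\theta$ are real, then read off the eigenspace description. The one genuine difference lies in how reality of $p$ is obtained. You argue abstractly from the conjugation symmetry: since $C$ commutes with $H_u$ and is multiplicative, $(Cp)\Ran H_{C\theta}$ is a second representation of $E_H(s)$ satisfying the same normalisation, so Remark~\ref{rmk.z3} forces $Cp=\beta p$, and a square-root adjustment makes $p$ real. The paper instead identifies $p$ concretely: writing $p(z)=z^m w(z)$ with $w(0)\neq 0$, a short inner-product calculation shows $\overline{w(0)}\,p=P_s z^m$, so one may simply take $p=P_s z^m/\norm{P_s z^m}$, which is manifestly in $\calH^2_{\rm real}$ because $P_s$ preserves $\calH^2_{\rm real}$. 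The paper then fixes $\theta$ directly by the normalisation $H_u(p)=\lambda p\theta$, which simultaneously forces $\theta\in\calH^2_{\rm real}$ and builds in the sign, whereas you separate these into the $\beta$-adjustment and the final replacement $\theta\mapsto\eps\theta$. Your symmetry argument is cleaner and more conceptual; the paper's is more explicit and avoids invoking the uniqueness statement twice.
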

Under the hypothesis of the corollary, we also have
$$
\Ker (H_u+\lambda I)=\{ pf: f\in \Ran H_\theta\cap \calH^2_\real\ , \ H_\theta f=-f\}. 
$$
Since 
$$
\Ker (H_u\pm\lambda I)=p\Ker(H_\theta\pm I),
$$
the analysis of the dimension of these eigenspaces reduces to the 
analysis of the action of $H_\theta$ on $\Ran H_\theta$. If $\deg \theta=\infty$, it is clear
that both dimensions above are infinite. If  $\deg \theta=k<\infty$, then 
\eqref{z9} shows that the action of $H_\theta$ on $\Ran H_\theta$ can be represented by 
the $(k+1)\times(k+1)$ matrix
$$
\begin{pmatrix}
0 & 0 & \cdots & 0 & 1
\\
0 & 0 & \cdots & 1 & 0
\\
\vdots & \vdots & & \vdots & \vdots 
\\
0 & 1 & \cdots & 0 & 0
\\
1 & 0 & \cdots & 0 & 0
\end{pmatrix}. 
$$
Thus, we are led to considering the eigenspaces 
of this matrix corresponding to the eigenvalues $\pm1$. 
As in \cite{GGEsterle}, we conclude that 
$$
\abs{\dim\Ker (H_u-\lambda I)-\dim\Ker (H_u+\lambda I)}\leq1,
$$
which recovers the result of \cite{MPT}. 
 
\subsection{Hankel operators on the real line}\label{sec.z8}
Here we briefly discuss the analogous result for Hankel operators acting on the Hardy space $\calH^2(\bbR)$ of the real line. 
Let us first introduce some notation. 
As usual, 
$$
\calH^2(\bbR)=\{\bbf\in L^2(\bbR): \wh \bbf(\xi)=0, \,\, \xi<0\}, 
$$
where $\wh \bbf$ is the Fourier transform,
$$
\wh\bbf(\xi)=\int_\bbR e^{-i2\pi \xi x}\bbf(x)dx.
$$
As it is standard, we shall consider functions in $\calH^2(\bbR)$ as holomorphic functions in the upper half-plane. 
Let $\bP$  be the orthogonal projection from $L^2(\bbR)$ onto $\calH^2(\bbR)$. 
For $\bu\in\BMOA(\bbR)$, the Hankel operator $\bH_\bu$ in $\calH^2(\bbR)$ is defined by 
$$
\bH_\bu \bbf=\bP(\bu \overline{\bbf}), \quad \bbf\in \calH^2(\bbR). 
$$
Since the kernel of $\bH_\bu$ is invariant under the shifts $\bbf(x)\mapsto e^{ix\xi}\bbf(x)$, $\xi>0$,
by a theorem of Lax \cite{Lax} (see also \cite[Section 6.5]{Nikolski}), similarly to the unit disk case, 
this kernel can be described as
$$
\Ker \bH_\bu=\bpsi\calH^2(\bbR), 
$$
where $\bpsi$ is an inner function in the upper half-plane.

For an inner function $\btheta$ in the upper half-plane, we will use the model space
$\bcalK_\theta=\Ran \bH_\btheta\subset \calH^2(\bbR)$, characterised by the condition
$$
\bbf\in \Ran \bH_\btheta \text{ if and only if } \btheta\overline{\bbf}\in\calH^2(\bbR).
$$
A holomorphic function $\bp$ in the upper half plane is called an isometric multiplier on $\Ran \bH_\btheta$, if
for any $\calH^2(\bbR)$, the product $\bp\bbf$ belongs to $\calH^2(\bbR)$ and 
$$
\norm{\bp\bbf}=\norm{\bbf}. 
$$
In this case, we shall call 
$$
\bp\Ran \bH_\btheta=\{\bp\bbf: \bbf\in \Ran \bH_\btheta\}
$$
the weighted model space, associated with $\bp$ and $\btheta$; clearly, this is a closed
subspace of $\calH^2(\bbR)$. 
We shall denote by $\bT_\bp$ the Toeplitz operator with the symbol $\bp$ acting on $\Ran\bH_\btheta$, 
i.e. $\bT_\bp\bbf=\bp\bbf$. 

In analogy with Theorem~\ref{thm.z0}, we have the following result. 

\begin{theorem}\label{thm.bz0}
Let $\bu\in\BMOA(\bbR)$, let $\bH_\bu$ be the corresponding anti-linear Hankel operator and 
let $s>0$ be a singular value of $\bH_\bu$, i.e. $\Ker (\bH_\bu^2-s^2I)\not=\{0\}$. 
\begin{enumerate}[\rm (i)]
\item
There exists an inner function $\btheta$ in the upper half-plane and an isometric multiplier $\bp$ 
on $\Ran \bH_\btheta$ 
such that 
$$
\Ker (\bH_\bu^2-s^2I)=\bp\Ran \bH_\btheta.
$$
The functions $\bp$ and $\btheta$  can be chosen such that
$$
\bH_\bu \bT_\bp=s\bT_\bp\bH_\btheta \quad \text{ on $\Ran \bH_\btheta$.}
$$
\item
Let $\bvarphi$ be the inner factor of $\bp$.  Then 
we have 
$$
s=\norm{\bH_\bu|_{\bvarphi\calH^2}}
$$ 
and 
the degree of $\bvarphi$ equals the total multiplicity of the spectrum of 
$\abs{\bH_\bu}=\sqrt{\bH_\bu^2}$ in the open interval $(s,\infty)$. 
\end{enumerate}
\end{theorem}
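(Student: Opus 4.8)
The plan is to reduce Theorem~\ref{thm.bz0} to the already-established unit-disk result, Theorem~\ref{thm.z0}, by transporting everything through the Cayley transform. Concretely, I would use the standard conformal map $\omega:\bbD\to\{\Im z>0\}$, $\omega(z)=i\frac{1+z}{1-z}$, which induces a unitary operator $U:\calH^2(\bbR)\to\calH^2(\bbT)$ (up to the usual Jacobian factor $(U\bbf)(z)=\frac{\sqrt{2\pi}}{1-z}\bbf(\omega(z))$, or the analogous normalisation that makes $U$ isometric). The first step is to record precisely how $U$ intertwines the relevant structures: it carries inner functions on the upper half-plane to inner functions on $\bbD$, model spaces $\Ran\bH_\btheta$ to model spaces $\Ran H_\theta$ with $\theta=U$-transform of $\btheta$, and multiplication by a holomorphic $\bp$ to multiplication by the corresponding $p$. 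The key compatibility to check is that $U$ intertwines the \emph{anti-linear} Hankel operators, i.e.\ $U\bH_\bu U^{-1}=H_u$ for an appropriate symbol $u\in\BMOA(\bbT)$ determined by $\bu$; this is where the anti-linearity and the complex-conjugation in the definition $\bH_\bu\bbf=\bP(\bu\overline{\bbf})$ must be tracked carefully through the Jacobian factors.

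Once the intertwining $U\bH_\bu U^{-1}=H_u$ is in place, the eigenspace identity transfers immediately: $\Ker(\bH_\bu^2-s^2I)=U^{-1}\Ker(H_u^2-s^2I)=U^{-1}E_H(s)$. Applying Theorem~\ref{thm.z0}(i) on the disk side gives an inner function $\theta$ and isometric multiplier $p$ on $\Ran H_\theta$ with $E_H(s)=p\Ran H_\theta$ and $H_u(p)=s\theta p$. I would then set $\btheta=U^{-1}\theta$ and define $\bp$ so that $U\bT_\bp=T_p U$ as multiplication operators; pulling the factorisation $E_H(s)=p\Ran H_\theta$ back through $U^{-1}$ yields $\Ker(\bH_\bu^2-s^2I)=\bp\Ran\bH_\btheta$. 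The intertwining relation $\bH_\bu\bT_\bp=s\bT_\bp\bH_\btheta$ then follows by conjugating $H_uT_p=sT_pH_\theta$ by $U$, using that $U$ commutes appropriately with the three factors.

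For part (ii), the quantities involved are all unitarily or conformally invariant. The inner factor $\bvarphi$ of $\bp$ is the $U^{-1}$-transform of the inner factor $\varphi$ of $p$, and the degree of an inner function (defined as $\dim\calK_\varphi$) is preserved because $U$ maps $\calK_\varphi$ isometrically onto $\bcalK_{\bvarphi}$. Likewise $\norm{\bH_\bu|_{\bvarphi\calH^2(\bbR)}}=\norm{H_u|_{\varphi\calH^2(\bbT)}}=s$ since $U$ maps $\bvarphi\calH^2(\bbR)$ onto $\varphi\calH^2(\bbT)$ and intertwines the Hankel operators, and the total multiplicity of the spectrum of $\abs{\bH_\bu}$ in $(s,\infty)$ equals that of $\abs{H_u}$ because unitary equivalence preserves spectral projections. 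Thus part (ii) for $\bH_\bu$ is a direct consequence of part (ii) for $H_u$.

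The main obstacle I expect is purely bookkeeping rather than conceptual: getting the Jacobian/normalisation factors in $U$ exactly right so that (a) $U$ is genuinely unitary, (b) it sends inner functions to inner functions and isometric multipliers to isometric multipliers without spurious unimodular or weight factors, and (c) it intertwines the \emph{conjugate-linear} Hankel operators, where the factor $\frac{1}{1-z}$ and its conjugate interact with the $\overline{\bbf}$ in the definition. In particular one must verify that the symbol $u$ transported from $\bu$ again lies in $\BMOA(\bbT)$, which is standard but should be cited (the Cayley transform preserves $\BMOA$). Everything after the correct choice of $U$ is a mechanical transfer, so I would state the properties of $U$ as a single lemma and then present the proof of Theorem~\ref{thm.bz0} as a short transcription of Theorem~\ref{thm.z0}.
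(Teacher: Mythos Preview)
Your approach is essentially the same as the paper's: transfer everything through the Cayley transform and invoke Theorem~\ref{thm.z0}. Two bookkeeping points worth flagging, since you correctly anticipated them as the main obstacle. First, the computation (the paper's Proposition~\ref{prp.B3}) shows that the conjugate of $\bH_\bu$ is $K_u=H_{S^*u}$ rather than $H_u$ on the nose; this is harmless because Theorem~\ref{thm.z0} applies to any Hankel symbol, but it does affect the explicit formulas. Second, inner functions and multipliers transform by \emph{composition} with the conformal map, not via the unitary $U$ (which carries the Jacobian factor), so your line ``$\btheta=U^{-1}\theta$'' is not quite right: in the paper's notation $\bp=p\circ\mu$ while $\btheta(z)=\mu(z)\,\theta(\mu(z))$, the extra factor $\mu$ arising from the $z$ in $\Ran H_\theta=\calK_{z\theta}$.
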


\subsection{The structure of the paper}
The strategy of the proof is outlined in Section~\ref{sec.bb}, 
where we introduce the Hardy space anti-linear version $K_u$ of the operator $\wt \Gamma=\{\gamma_{j+k+1}\}_{j,k=0}^\infty$ and
state Theorem~\ref{thm.z1}, which is a more detailed version of our main result, involving both $H_u$ and $K_u$.
Further, we discuss a generalisation of nearly $S^*$-invariant subspaces and state a general geometric
result (Theorem~\ref{thm.z2}) about such subspaces. 

In Section~\ref{sec.aa} we prove Theorem~\ref{thm.z2}. 
Using this theorem, in Section~\ref{sec.b} we prove Theorem~\ref{thm.z1}. 
In Section~\ref{sec.aaa} we prove  Corollary~\ref{cor.z0} and  Theorem~\ref{thm.z0}(i).
Section~\ref{sec.example} is devoted to a simple example, corresponding to Hankel operators with one or two singular values.  
In Section~\ref{sec.aab} we prove Theorem~\ref{thm.z0}(ii). 
In Section~\ref{app.B} by considering a conformal map from the unit disk to the upper half-plane we prove Theorem~\ref{thm.bz0}. 
In Appendix~\ref{sec.app}, we reformulate our main result Theorem~\ref{thm.z0}(i) in terms of the linear 
(rather than anti-linear) representation of Hankel operators in the Hardy space.

\subsection{Acknowledgements}
Our research was supported by EPSRC grant ref. EP/N022408/1. 
We acknowledge the hospitality of the Mathematics Departments at King's College and at Universit\'e Paris-Sud XI. 
We are grateful to Alexandru Aleman, Jonathan Partington, Evgeny Abakumov and Roman Bessonov 
for useful discussions concerning the topic of weighted model spaces.

\section{The strategy of the proof}\label{sec.bb}

\subsection{The operator $K_u$}
Our proof  requires another Hankel operator, which is defined by 
$$
K_u:=H_uS=S^*H_u=H_{S^*u}. 
$$
Clearly, $K_u$ is unitarily equivalent to $\wt \Gamma\calC$, where $\wt \Gamma=\{\wh u_{j+k+1}\}_{j,k=0}^\infty$. 
We have a crucial identity
\[
K_u^2=H_uSS^*H_u=H_u^2-(\cdot,u)u\ ,
\label{a5}
\]
where $(\cdot, u)u$ denotes the rank one operator corresponding to the element
$u$. 
For $s>0$, similarly to $E_{H_u}(s)$, we denote
$$
E_{K_u}(s)=\Ker(K_u^2-s^2I). 
$$
We shall write $E_K(s)$ instead of $E_{K_u}(s)$ 
when the choice of $u$ is clear from the context. 
We start with the basic statement which shows that (as a consequence of \eqref{a5})
the eigenspaces $E_H(s)$ and $E_K(s)$ differ by the one-dimensional subspace 
spanned by $u$. 
\begin{lemma}\label{lma.z1}
Let $s>0$ be a singular value of either $H_u$ or $K_u$, i.e. 
$$
E_H(s)+E_K(s)\not=\{0\}.
$$
Then one (and only one) of the following properties holds: 
\begin{enumerate}[\rm (1)]
\item
$u\not\perp E_H(s)$, and $E_K(s)=E_H(s)\cap u^\perp$;
\item
$u\not\perp E_K(s)$, and $E_H(s)=E_K(s)\cap u^\perp$.
\end{enumerate}
\end{lemma}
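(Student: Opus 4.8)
The plan is to read the identity \eqref{a5} as a rank-one perturbation: setting $A=H_u^2$ and $B=K_u^2$, we have $B=A-(\cdot,u)u$, where both $A$ and $B$ are linear and self-adjoint. (Note $u\neq0$: otherwise $H_u=K_u=0$ and $E_H(s)+E_K(s)=\{0\}$ for $s>0$.) The first observation is that the perturbation $(\cdot,u)u$ vanishes on $u^\perp$, so $A$ and $B$ agree there; consequently
\[
N:=E_H(s)\cap u^\perp=E_K(s)\cap u^\perp .
\]
Everything then hinges on whether $u$ is orthogonal to $E_H(s)$ and/or to $E_K(s)$, and the whole statement is a case analysis of these two alternatives.

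Next I would rule out the possibility that $u$ is non-orthogonal to \emph{both} eigenspaces. If $f\in E_K(s)$ with $(f,u)\neq0$, then \eqref{a5} gives $(A-s^2)f=(f,u)u$, so $u\in\Ran(A-s^2)$; since $A$ is self-adjoint, $\Ran(A-s^2)\perp\Ker(A-s^2)=E_H(s)$, whence $u\perp E_H(s)$. The symmetric computation, using $(B-s^2)g=-(g,u)u$ for $g\in E_H(s)$ with $(g,u)\neq0$, gives $u\perp E_K(s)$. Thus the conditions $u\not\perp E_H(s)$ and $u\not\perp E_K(s)$ cannot hold simultaneously, so at most one of (1), (2) is possible. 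Moreover, whenever one of them holds, say $u\not\perp E_H(s)$, the forced orthogonality $u\perp E_K(s)$ gives $E_K(s)\subseteq u^\perp$, i.e. $E_K(s)=E_K(s)\cap u^\perp=N=E_H(s)\cap u^\perp$, which is exactly the identity in (1); symmetrically for (2). So the mechanical part of the lemma reduces to a single point.

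The main obstacle is to exclude the degenerate case $u\perp E_H(s)$ \emph{and} $u\perp E_K(s)$, which would force $E_H(s)=E_K(s)=N$ and make neither (1) nor (2) hold. This is where genuine Hankel structure, beyond the abstract rank-one relation, must enter. Assuming this case, I would prove two facts for $f\in N$, using $K_u=H_uS=S^*H_u$, the identity $SS^*=I-(\cdot,\1)\1$, and $u=H_u\1$. Put $g=H_uf/s$; then $H_ug=sf$, $H_u^2g=s^2g$, and $u\perp E_H(s)$ gives $g\in N$. On one hand, $g\in E_K(s)$ yields $s^2g=K_u^2g=sK_u(S^*f)$, hence $K_uS^*f=sg$ (here $K_ug=S^*H_ug=sS^*f$); on the other hand $K_uS^*f=H_uSS^*f=H_u(f-\widehat f_0\,\1)=sg-\widehat f_0\,u$. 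Comparing gives $\widehat f_0\,u=0$, so $f(0)=0$. Consequently $f=Sf_1$ with $f_1=S^*f$, and then $K_uf_1=H_u(Sf_1)=H_uf=sg$, whence $K_u^2f_1=sK_ug=s^2S^*f=s^2f_1$, i.e. $S^*f\in N$. Thus every element of $N$ vanishes at the origin and $N$ is $S^*$-invariant; iterating $S^*$ annihilates all Taylor coefficients, forcing $N=\{0\}$, contradicting $E_H(s)+E_K(s)\neq\{0\}$.

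Finally I would assemble the pieces: the non-orthogonal-to-both case is excluded by the self-adjointness argument, the orthogonal-to-both case by the shift-invariance argument, so exactly one of $u\not\perp E_H(s)$, $u\not\perp E_K(s)$ holds, and that alternative yields precisely one of (1), (2). The step requiring the most care is the degenerate-case exclusion, specifically the two-way computation of $K_uS^*f$ and the verification that $N$ is closed under $S^*$; this is the only place where the shift relations, rather than the rank-one perturbation alone, are essential.
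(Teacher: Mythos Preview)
Your argument is correct and follows the same route as the paper: establish $E_H(s)\cap u^\perp=E_K(s)\cap u^\perp$ from the rank-one identity, exclude the both-non-orthogonal case via self-adjointness, and rule out the degenerate both-orthogonal case by showing the common eigenspace is $S^*$-invariant and orthogonal to $\1$. One inconsequential slip: anti-linearity of $H_u$ gives $\overline{\widehat f_0}\,u$ rather than $\widehat f_0\,u$ in your computation of $K_uS^*f$; the paper's handling of the degenerate case is marginally shorter (write $h=H_uh'$ with $h'\in V$, then $S^*h=K_uh'\in V$ and $(\1,h)=(h',u)=0$), but the content is identical.
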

In case (1) above, we will say that 
the singular value $s$ is \emph{$H$-dominant};
in case (2) we will say that $s$ is 
\emph{$K$-dominant}.

This lemma was established in \cite{GGAst} in the case when $H_u$ and $K_u$ are compact, 
but in fact the proof does not use compactness. 
In any case, in Section~\ref{sec.b} for completeness we repeat the proof. 

We note that for a general pair of operators $H_u^2$, $K_u^2$, satisfying the 
rank-one identity \eqref{a5}, three scenarios are possible: (1), (2) of Lemma~\ref{lma.z1} and 

(3) $u\perp E_H(s)$, $u\perp E_K(s)$, and $E_H(s)=E_K(s)$. 

\noindent
Identity $K_u=S^*H_u$ ensures that (3) is not possible, see the proof of Lemma~\ref{lma.z1}.

\subsection{ The Schmidt subspaces of $H_u$ and $K_u$}\label{sec.a6}

If $g\in \calH^2(\bbT)$, $g\not =0$, and if $g/\norm{g}$ is an isometric multiplier of $\Ran H_\theta$, 
below we write for simplicity 
$g\Ran H_\theta $ instead of $(g/\norm{g})\Ran H_\theta$.

The following theorem describes the Schmidt subspaces of both $H_u$ and $K_u$. 
The two cases below correspond to the two cases in Lemma~\ref{lma.z1}. 

\begin{theorem}\label{thm.z1}
Let $u\in\BMOA(\bbT)$ and let the anti-linear Hankel operators $H_u$, $K_u$ be as defined above. 
Let $s>0$ be a singular value of either $H_u$ or $K_u$. 
\begin{enumerate}[\rm (i)]
\item
Let $s$ be $H$-dominant (i.e.  $u\not\perp E_H(s)$); 
denote by $\1_s$ (resp. by $u_s$) the orthogonal projection of 
$\1$ (resp. of $u$) onto $E_H(s)$. 
Then $u_s=s\psi_s \1_s$ for an inner function $\psi_s$. 
The function $\1_s/\norm{\1_s}$ is an isometric multiplier on $\Ran H_{\psi_s}$ and
the subspaces $E_H(s)$, $E_K(s)$ are given by 
\begin{align}
E_H(s)&=\1_s\Ran H_{\psi_s}, 
\label{z6a}
\\
E_K(s)&=\1_s(\Ran H_{\psi_s}\cap \psi_s^\perp)=\1_s\Ran K_{\psi_s}. 
\notag
\end{align}
The action of $H_u$ and $K_u$ on these subspaces is given by 
\begin{align}
H_u T_{\1_s}&=sT_{\1_s}H_{\psi_s}\quad \text{ on $\Ran H_{\psi_s}$},
\label{z7}
\\
K_u T_{\1_s}&=sT_{\1_s}K_{\psi_s} \quad \text{ on $\Ran K_{\psi_s}=\Ran H_{\psi_s}\cap \psi_s^\perp$}.
\label{z4}
\end{align}
The inner function $\psi_s$ is uniquely defined by \eqref{z6a} and \eqref{z7}.

\item
Let $s$ be $K$-dominant (i.e. $u\not\perp E_K(s)$); 
denote by $\wt u_s$ the orthogonal projection of $u$ onto $E_K(s)$. 
Then 
$K_u(\wt u_s)=s\wt \psi_s \wt u_s$ 
for an inner function $\wt \psi_s$.
The function $\wt u_s/\norm{\wt u_s}$ is an isometric multiplier on $\Ran H_{\wt\psi_s}$. 
The subspaces $E_H(s)$, $E_K(s)$ are given by 
\begin{align}
E_H(s)&=\wt u_s S( \Ran K_{\wt \psi_s})=\wt u_s(\Ran H_{\wt \psi_s}\cap \1^\perp),
\notag
\\
E_K(s)&=\wt u_s \Ran H_{\wt\psi_s}.
\label{z6b}
\end{align}
The action of $H_u$ and $K_u$ on these subspaces is given by 
\begin{align}
H_uT_{\wt u_s}&=sT_{\wt u_s}SH_{\wt \psi_s}
\quad
\text{on $S( \Ran K_{\wt \psi_s})=\Ran H_{\wt \psi_s}\cap \1^\perp$,}
\label{z5}
\\
K_uT_{\wt u_s}&=s T_{\wt u_s}H_{\wt \psi_s}
\quad 
\text{on $\Ran H_{\wt \psi_s}$. }
\label{z6}
\end{align}
The inner function $\wt\psi_s$ is uniquely defined by \eqref{z6b} and \eqref{z6}. 
\end{enumerate}
\end{theorem}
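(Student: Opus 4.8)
The plan is to reduce Theorem~\ref{thm.z1} to a purely geometric statement about near shift-invariance---the role played by Theorem~\ref{thm.z2}---and then carry out the Hankel-specific bookkeeping. First I would record two elementary consequences of the definitions. From $K_u=H_uS=S^*H_u$ one obtains the clean identity $K_u^2=S^*H_u^2S$, which, read together with the rank-one identity \eqref{a5}, shows that if $f\in E_H(s)$ with $f(0)=0$ then $S^*f\in E_K(s)$, and dually that $S$ carries $E_K(s)$ back into $E_H(s)$ up to a one-dimensional error governed by $u$. Since $H_u$ (resp.\ $K_u$) commutes with $H_u^2$ (resp.\ $K_u^2$), each eigenspace reduces the corresponding anti-linear operator; in particular $H_u\1_s=u_s$. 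Lemma~\ref{lma.z1} then splits the argument into the $H$-dominant and $K$-dominant cases, which I expect to be dual under the informal exchange $(H_u,\1,S^*)\leftrightarrow(K_u,u,S)$.

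In the $H$-dominant case $E_K(s)=E_H(s)\cap u^\perp\subseteq E_H(s)$, so the implication above upgrades to: $f\in E_H(s)$, $f(0)=0\Rightarrow S^*f\in E_H(s)$; that is, $E_H(s)$ is genuinely nearly $S^*$-invariant. I would then invoke the structure theorem (the classical Hitt-Sarason case of Theorem~\ref{thm.z2}, cf.\ \cite{Hitt,Sarason}) to write $E_H(s)=g_0\calK_\eta$ with $g_0$ an isometric multiplier and $\calK_\eta$ a model space. Identifying $E_H(s)\cap z\calH^2(\bbT)$ with $E_H(s)\cap\1^\perp$ forces the distinguished generator to be $g_0=\1_s/\norm{\1_s}$, and since $g_0\cdot\1\in E_H(s)$ gives $\1\in\calK_\eta$, we get $\eta(0)=0$, i.e.\ $\eta=z\psi_s$ and $\calK_\eta=\Ran H_{\psi_s}$. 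This is \eqref{z6a}.

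To obtain the action formulas I would transport $H_u$ through the isometric multiplier. The operator $A:=T_{\1_s}^{-1}H_uT_{\1_s}$ is well defined on $\Ran H_{\psi_s}$, and because $\norm{H_ug}=s\norm{g}$ on $E_H(s)$ and $T_{\1_s}$ is isometric, $s^{-1}A$ is an antiunitary involution there. The heart of the matter is the identification $A=sH_{\psi_s}$: using $H_uS=S^*H_u$ and the commutation $T_{\1_s}S=ST_{\1_s}$ one checks that $A$ intertwines $S$ and $S^*$ on the model space exactly as the model Hankel operator $H_{\psi_s}$ does, and by Lemma~\ref{lma.a2} this pins down $A=sH_{\psi_s}$, thereby fixing $\psi_s$. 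Evaluating at $\1$ yields $u_s=H_u\1_s=s\psi_s\1_s$, the asserted normalisation, and hence \eqref{z7}. Then \eqref{z4} follows from $T_{\1_s}S=ST_{\1_s}$ together with $K_u=H_uS$, while translating the condition $f\perp u$ on $E_H(s)=\1_s\Ran H_{\psi_s}$ into the condition $h\perp\psi_s$ on $\Ran H_{\psi_s}$ (via the isometry $T_{\1_s}$ and $u_s=s\psi_s\1_s$) gives $E_K(s)=\1_s(\Ran H_{\psi_s}\cap\psi_s^\perp)$.

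In the $K$-dominant case $E_K(s)$ is the larger space, and here the rank-one term in \eqref{a5} genuinely intervenes: $E_K(s)$ is nearly $S^*$-invariant only up to a one-dimensional defect spanned by $S^*u$, which is precisely the situation the generalised structure theorem is built for. Applying it produces $E_K(s)=\wt u_s\Ran H_{\wt\psi_s}$, with the generator forced to be the projection $\wt u_s$ of $u$ (rather than of $\1$) because of the defect, together with \eqref{z6}, by the transport argument of the previous paragraph applied to $K_u$. To pass to $E_H(s)$ I would use that on $E_H(s)=E_K(s)\cap u^\perp$ one has $H_u=SK_u$: indeed $SS^*=I-(\,\cdot\,,\1)\1$ gives $H_uf=SK_uf+(u,f)\1$, and the correction vanishes on $u^\perp$. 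Combining $H_u=SK_u$ with \eqref{z6} and $ST_{\wt u_s}=T_{\wt u_s}S$ yields \eqref{z5} and the shifted description $E_H(s)=\wt u_s\,S(\Ran K_{\wt\psi_s})$ in \eqref{z6b}; the shift $S$ in these formulas is precisely the footprint of the rank-one defect. Uniqueness of $\psi_s$ and $\wt\psi_s$ comes from the uniqueness clause of the structure theorem. I expect the two real difficulties to be, first, proving the generalised Hitt-type theorem with a one-dimensional defect that underlies the $K$-dominant case (handled separately as Theorem~\ref{thm.z2}), and second, the identification $A=sH_{\psi_s}$ of the transported anti-linear involution, which is what converts the abstract weighted model space structure into the explicit action formulas and determines the inner functions.
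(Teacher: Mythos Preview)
Your overall architecture is right, but two steps do not go through as written.

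First, in the $H$-dominant case you correctly observe that $E_H(s)$ is nearly $S^*$-invariant in the classical sense, so Hitt's theorem yields $E_H(s)=\1_s\Ran H_{\psi}$ for \emph{some} inner $\psi$. The problem is your identification $A=sH_{\psi}$. The claimed intertwining fails at the level of operators: while $T_{\1_s}S=ST_{\1_s}$, one has $S^*(\1_s h)=\1_s S^*h$ \emph{only} when $h(0)=0$, so $T_{\1_s}^{-1}S^*\neq S^*T_{\1_s}^{-1}$ on the relevant range, and Lemma~\ref{lma.a2} gives no uniqueness statement that would pin down an anti-unitary involution on a model space from such an intertwining. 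Your argument is circular: you need $u_s=s\psi\1_s$ to compute the action, and you deduce $u_s=s\psi\1_s$ from the action. The paper avoids this by reversing the order: Theorem~\ref{thm.z2}(i), applied with $p=\1_s/\norm{\1_s}$ and $q=u_s/\norm{u_s}$, \emph{first} proves that the ratio $q/p$ is inner (via a reproducing-kernel duality argument), which already gives $u_s=s\psi_s\1_s$; part~(ii) then yields the weighted model space; and the action formula is a two-line computation using the identity $H_u(fg)=\proj(\overline f\,H_ug)$ of~\eqref{b0}, with no abstract transport needed.

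Second, in the $K$-dominant case your geometric description is off. The space $E_K(s)$ is not ``nearly $S^*$-invariant up to a one-dimensional defect spanned by $S^*u$''; what one actually has (and what Theorem~\ref{thm.z2} requires) is the identity
\[
S\bigl(E_K(s)\cap (K_uu)^\perp\bigr)=E_K(s)\cap u^\perp,
\]
proved in the paper as Lemma~\ref{crucial}. Here the two codimension-one subspaces are cut out by $\wt u_s$ and $K_u\wt u_s$, not by $\1$ and a defect vector; neither side involves the condition $f(0)=0$. This is why the ``generator'' is $\wt u_s$ and why the second distinguished vector $q=K_u\wt u_s/\norm{K_u\wt u_s}$ enters. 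Once this is established, Theorem~\ref{thm.z2}(i) gives $K_u\wt u_s=s\wt\psi_s\wt u_s$ with $\wt\psi_s$ inner, and the rest follows by the same direct computation with \eqref{b0}. Your derivation of $E_H(s)$ and \eqref{z5} from \eqref{z6} via $H_u=SK_u$ on $u^\perp$ is correct and matches the paper.
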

Let us briefly explain how Theorem \ref{thm.z0} follows from Theorem \ref{thm.z1} (we give the full proof in Section~\ref{sec.aaa}). 
In the $H$-dominant case, the first part of Theorem \ref{thm.z1} immediately provides
the decomposition \eqref{z10a} with $p=\frac{\1_s}{\Vert \1_s\Vert}$ and $\theta =\psi_s$. 
In the $K$-dominant case, the second part of Theorem \ref{thm.z1} provides
$E_H(s)=\wt u_s S( \Ran K_{\wt \psi_s})$. 
It is not difficult to derive the desired decomposition  \eqref{z10a} from here. 
Uniqueness is an easy consequence of Remark~\ref{rmk.z3}.

\subsection{A generalisation of nearly $S^*$-invariant subspaces}\label{sec.a7a}
The proof of Theorem~\ref{thm.z1} is a consequence of the following general statement. 

\begin{theorem}\label{thm.z2}
Let $M$ be a closed subspace of $\calH^2(\bbT)$, and let $p,q\in M$ be
unit vectors such that 
\[
M\cap p^\perp=S(M\cap q^\perp). 
\label{z11}
\]
\begin{enumerate}[\rm (i)]
\item
Then $q=\theta p$ with an inner function $\theta$. 
\item
Assume in addition that for any $f\in \Ran H_\theta\cap \calH^\infty(\bbT)$ we have the implication 
\[
pf\in M, \quad 
f\perp \1\quad \Rightarrow \quad pf\perp p. 
\label{z12}
\]
Then $p$ is an isometric multiplier on $\Ran H_\theta$ and 
\[
M=p\Ran H_\theta\ . 
\label{z12a}
\]
\end{enumerate}
\end{theorem}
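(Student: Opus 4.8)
The plan is to prove (i) and (ii) simultaneously by realising $M$ as the image of a model space under an isometric multiplier, and then reading off $p$ and $\theta$. The first move is to rewrite \eqref{z11} as a near-invariance property. Since $S$ is an isometry we have $S^*S=I$ on $M\cap q^\perp$, so applying $S^*$ to \eqref{z11} gives $S^*(M\cap p^\perp)=M\cap q^\perp$; in particular every $f\in M\cap p^\perp$ is of the form $Sg$, hence $f(0)=0$ and $S^*f\in M$. Using $M=\bbC p\oplus(M\cap p^\perp)$ and $M=\bbC q\oplus(M\cap q^\perp)$ (recall $\norm{p}=\norm{q}=1$), this says that $M$ is stable under ``removing the $p$-component and dividing by $z$''.

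Next I would run the Hitt--Sarason division argument. For $f\in M$ put $c_0=\langle f,p\rangle$ and $f_1=f-c_0p\in M\cap p^\perp$; then $f_1(0)=0$, so $g_1:=S^*f_1\in M$ with $\norm{g_1}=\norm{f_1}$ and $f=c_0p+zg_1$. Iterating on $g_1$ produces coefficients $c_n$, the telescoping identity $\norm{f}^2=\sum_n|c_n|^2$, and the conclusion $f=p\,h$ with $h=\sum_n c_nz^n\in\calH^2$, $\norm{h}=\norm{f}$. Thus $M=pK$, where $K:=\{f/p:f\in M\}$ is closed, $p$ is an isometric multiplier on $K$, and the same recursion (if $h=f/p$ then $g_1/p=S^*h\in K$) shows $K$ is $S^*$-invariant. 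By Beurling's theorem $K=\calK_\alpha$ for some inner $\alpha$; since $\1=p/p\in K$ we have $\alpha(0)=0$.

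To identify $\alpha$ and prove (i) I would compute the reproducing kernel of $M$. Because $S$ maps $M\cap q^\perp$ isometrically onto $M\cap p^\perp$ one has $P_{M\cap p^\perp}=SP_{M\cap q^\perp}S^*$, which together with $P_{M\cap p^\perp}=P_M-\langle\cdot,p\rangle p$ and $P_{M\cap q^\perp}=P_M-\langle\cdot,q\rangle q$ yields the operator identity $P_M-SP_MS^*=\langle\cdot,p\rangle p-\langle\cdot,zq\rangle zq$. Evaluating on the Szeg\H o kernel $k_w$ gives
\begin{equation*}
(1-\overline{w}z)\,k^M_w(z)=\overline{p(w)}\,p(z)-\overline{w}\,\overline{q(w)}\,z\,q(z),
\end{equation*}
and setting $z=w$ shows $|p(w)|^2\ge|w|^2|q(w)|^2$ on $\bbD$, so $|q|\le|p|$ a.e.\ on $\bbT$. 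Applying the division established above to $f=q$ gives $q=p\theta$ with $\theta=q/p\in\calH^2$ and $\norm{\theta}=\norm{q}=1$; since $|\theta|\le1$ a.e.\ we get $\theta\in\calH^\infty$, and then $1=\norm{q}^2=\int_\bbT|\theta|^2|p|^2\le\int_\bbT|p|^2=1$ forces $|\theta|=1$ a.e. Therefore $\theta$ is inner and $q=\theta p$, which is (i).

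It remains to show $K=\Ran H_\theta=\calK_{z\theta}$. Both $\1$ and $\theta=q/p$ lie in $K=\calK_\alpha$, and $\theta\in\calK_\alpha$ forces $z\theta\mid\alpha$, so $\calK_{z\theta}\subseteq K$ by Lemma~\ref{lma.a2}. For the reverse inclusion I would transport \eqref{z11} through the multiplier $p$: using $\langle pg,p\rangle=\langle g,\1\rangle$ and $\langle pg,q\rangle=\langle g,\theta\rangle$ it becomes $K\cap\1^\perp=z(K\cap\theta^\perp)$, which in particular excludes $K=\calH^2$ and, since $S^*\alpha=\alpha/z\in K$, forces $\alpha/z\in\bbC\theta$, i.e.\ $\alpha=z\theta$ up to a unimodular constant; this identification is the role played by the additional hypothesis \eqref{z12} in the statement. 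Hence $M=p\calK_{z\theta}=p\Ran H_\theta$ with $p$ an isometric multiplier, which is \eqref{z12a}. The main obstacle is part (i): beyond the bookkeeping in the telescoping isometry, the crux is upgrading the contractive bound $|q|\le|p|$ to the exact equality $|q|=|p|$ needed for $\theta$ to be inner, which is precisely where the reproducing-kernel positivity must be combined with the normalisation $\norm{p}=\norm{q}=1$; the degenerate possibilities (a zero of $p$ inside $\bbD$, or $K=\calH^2$) must also be checked not to obstruct the argument.
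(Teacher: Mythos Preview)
Your overall strategy---run a Hitt--Sarason division to write $M=pK$ with $K$ an $S^*$-invariant model space, then read off $\theta$---is different from the paper's and is attractive, but there is a genuine gap at the step you treat as routine. The recursion gives the exact identity
\[
\norm{f}^2=\sum_{n=0}^{N-1}|c_n|^2+\norm{g_N}^2,
\]
and your claimed ``telescoping identity'' $\norm{f}^2=\sum_n|c_n|^2$ needs $\norm{g_N}\to 0$. But $g_N=p\,(S^*)^Nh$ with $h=\sum c_nz^n$, and while $(S^*)^Nh\to 0$ in $\calH^2$, you have no bound on multiplication by $p$; in fact $\lim_N\norm{g_N}^2=\norm{ph}^2-\norm{h}^2$, so the vanishing of the remainder is \emph{equivalent} to the isometry of $T_p$ you are trying to prove. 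The same gap reappears when you assert that $K$ is closed (you only know $T_{1/p}:M\to K$ is a contraction, not that it is bounded below). This is precisely where the paper invokes hypothesis \eqref{z12}: it is used to obtain the orthogonality $p(f-f(0))\perp p$, from which the identity $\norm{pf}^2=|f(0)|^2+\norm{pS^*f}^2$ and hence the isometry follow by the Hitt argument. So your remark that \eqref{z12} only serves to pin down $\alpha=z\theta$ misreads its role.

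That said, the reproducing-kernel identity you derive is exactly the right tool to close the gap---you just stop short of using it. From $(1-\overline w z)k_w^M(z)=\overline{p(w)}p(z)-\overline w z\,\overline{q(w)}q(z)$ and $q=\theta p$ one gets $k_w^M=\overline{p(w)}\,p\,k_w^{\calK_{z\theta}}$, and then for $w,w'$ with $p(w)p(w')\neq0$,
\[
\langle p\,k_w^{\calK_{z\theta}},\,p\,k_{w'}^{\calK_{z\theta}}\rangle
=\frac{\langle k_w^M,k_{w'}^M\rangle}{\overline{p(w)}p(w')}
=\frac{k_w^M(w')}{\overline{p(w)}p(w')}
=k_w^{\calK_{z\theta}}(w')
=\langle k_w^{\calK_{z\theta}},k_{w'}^{\calK_{z\theta}}\rangle,
\]
so $T_p$ is isometric on a dense subspace of $\calK_{z\theta}$ and extends to an isometry onto $M$. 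Carried out this way your argument actually proves (ii) \emph{without} the extra hypothesis \eqref{z12}, which is stronger than what the paper states; but as written, the isometry is asserted rather than proved, and the difficulty you flag (upgrading $|q|\le|p|$ to equality) is not the real one---that step is immediate from $\norm{p}=\norm{q}$ once $|\theta|\le1$ a.e.
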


Both the statement and the proof of Theorem~\ref{thm.z2} are closely related 
to the theory of the \emph{nearly $S^*$-invariant subspaces}, see \cite{Hitt,Sarason}. 
These are the subspaces $M$ satisfying $f\in M\cap \1^\perp\Rightarrow S^*f\in M$. 
By \cite[Proposition~3]{Hitt}, every nearly $S^*$-invariant subspace $M$ is either
a weighted model space or a subspace of the form $M=\psi\calH^2(\bbT)$
with inner $\psi$.

However, a weighted model space is not necessarily nearly $S^*$-invariant. 
For example, 
if $p(0)=0$, we have $M\perp \1$, and so nearly $S^*$-invariance of $M$ would mean 
$S^*M\subset M$, which is only possible if $M=\{0\}$. 
In any case, $M$ is $S^*$-invariant on a subspace of codimension $1$, 
and so it shares many properties with nearly $S^*$-invariant subspaces. 

To prove Theorem~\ref{thm.z1}(i), we take
$$
M=E_H(s), \quad p=\1_s/\norm{\1_s},\quad  q=u_s/\norm{u_s}.
$$ 
To prove Theorem~\ref{thm.z1}(ii), we take 
$$
M=E_K(s), \quad p=\wt u_s/\norm{\wt u_s},\quad  q=K_u \wt u_s/\norm{K_u\wt u_s}.
$$
The crucial geometric property \eqref{z11} in both cases is established in Lemma~\ref{crucial} below. 

\subsection{Some preliminary identities}
Throughout the rest of the paper, we use the following identity:
\[
H_u(fg)=\proj(\overline{f}H_u g), \quad f\in \calH^\infty(\bbT), \quad g\in \calH^2(\bbT).
\label{b0}
\]
The proof is a direct calculation:
$$
H_u(fg)=\proj (u\overline{f}\overline{g})=\proj (\overline{f} u\overline{g})=\proj (\overline{f}\proj(u\overline{g}))=
\proj(\overline{f}H_u g).
$$
Of course, this identity can also be applied with $K_u$ in place of $H_u$. 
Identity \eqref{b0} can be alternatively written in terms of the Toeplitz operators $T_f$, $T_{\overline{f}}$ as 
\[
H_uT_f=T_{\overline{f}}H_u.
\label{b0a}
\]
We also use the fact that both $H_u$ and $K_u$ satisfy the following symmetry condition: 
$$
(H_uf,g)=(H_ug,f), \quad f,g\in \calH^2(\bbT),
$$
which follows directly from the definition of $H_u$.

\section{Proof of Theorem~\ref{thm.z2}}\label{sec.aa}
\subsection{Factorisation $q=\theta p$}
1)
For every $\zeta\in \bbD$, denote by $f_\zeta$ the reproducing kernel of 
$M$, namely $f_\zeta\in M$ and 
$$\forall h\in M, \quad h(\zeta)=(h, f_\zeta)\ .$$
Denote by $g_\zeta$ the orthogonal projection of $f_\zeta$ onto $M\cap q^\perp $, 
and by $h_\zeta$ the orthogonal projection of $f_\zeta$ onto $M\cap p^\perp $. 
By assumption, we have $h_\zeta=SS^*h_\zeta$, with $S^*h_\zeta\in M\cap q^\perp$, 
and $Sg_\zeta\in M\cap p^\perp$, hence
$$
h_\zeta(\zeta)=\zeta S^*h_\zeta(\zeta)=\zeta(S^*h_\zeta,f_\zeta)
=\zeta(S^*h_\zeta,g_\zeta)=\zeta(h_\zeta,Sg_\zeta)
=\zeta(f_\zeta,Sg_\zeta)=\abs{\zeta}^2\overline {g_\zeta(\zeta)}\ .
$$
Since $f_\zeta$ is the reproducing kernel of $M$, we have
$$
\norm{g_\zeta}^2=(g_\zeta, f_\zeta)=g_\zeta(\zeta)\ ,\ 
\norm{h_\zeta}^2=(h_\zeta, f_\zeta)=h_\zeta(\zeta),
$$
and so we conclude that 
\[
\norm{h_\zeta}^2
=\abs{\zeta}^2\norm{g_\zeta}^2
\leq \norm{g_\zeta}^2.
\label{b3b}
\]

2) 
Let us first prove that $q=\theta p$ with some $\theta\in \calH^\infty$, $\norm{\theta}_{\calH^\infty}\leq1$. 
Since $q$ and $p$ are non zero holomorphic functions on the unit disk $\bbD$, it suffices to prove that
$$\forall \zeta \in \bbD\ ,\  \abs{q(\zeta)} \leq \abs{p(\zeta)}\ .$$
We shall obtain this from \eqref{b3b} by a duality argument. 
 
Expanding $f_\zeta$ with respect to two 
orthogonal decompositions 
$$
M=(M\cap q^\perp) \oplus \Span\{q\}\ ;\ M=(M\cap p^\perp) \oplus \Span\{p\}\ ,
$$
we obtain 
\[
f_\zeta=g_\zeta+\lambda q=h_\zeta+\mu p\ .
\label{b3a}
\]
The constants $\lambda $ and $\mu $ satisfy 
$$
\| f_\zeta\|^2
=
\|g_\zeta\|^2+| \lambda |^2
=
\|h_\zeta\|^2+|\mu |^2.
$$
Using \eqref{b3b}, we  conclude that $\abs{\mu}\geq \abs{\lambda}$. 
Taking an inner product of \eqref{b3a} with $q$ and with $p$, we get
\begin{gather*}
q(\zeta)=(q,f_\zeta)=(q,\lambda q)=\overline{\lambda},
\\
p(\zeta)=(p,f_\zeta)=(p,\mu p)=\overline{\mu}.
\end{gather*}
It follows that $\abs{q(\zeta)}\leq\abs{p(\zeta)}$, as required. 

3)
Finally, let us check that $\theta$ is inner. 
Using $q=\theta p$,  we see that
$$
0=\norm{p}^2-\norm{q}^2=\int_{-\pi}^\pi(1-\abs{\theta(e^{it})}^2)\abs{p(e^{it})}^2\frac{dt}{2\pi}.
$$
As $\abs{\theta(e^{it})}^2\leq1$, we see that the integrand above vanishes for a.e. $t$. 
Since $p(e^{it})\not=0$ for a.e. $t$, we 
conclude that $\abs{\theta(e^{it})}^2=1$ for a.e. $t$. 

\subsection{$p$ is an isometric multiplier on $\Ran H_\theta$}

1)
Denote 
$$
F=\Span\{(S^*)^n \theta: n\geq0\},
$$
where $\Span$ denotes the set of all finite linear combinations. 
It is easy to see that 
$$
\Clos F=\Ran H_\theta.
$$ 
Let us prove that 
$$
pF\subset M. 
$$
Let $f\in F$; assume that $pf\in M$ and 
consider the element $p(f-f(0))\in M$.  
Since $f-f(0)\perp \1$, by the geometric assumption \eqref{z12} we have 
$p(f-f(0))\perp p$. From \eqref{z11} it follows that $p(f-f(0))=zg$ with some $g\in M\cap q^\perp$
and so $pS^*f=g\in M$. Thus, we have an implication
$$
f\in F, \quad pf\in M \quad \Rightarrow \quad
pS^*f\in M. 
$$
Since  $q=p\theta\in M$, we can apply this implication to $f=\theta$, then to $f=S^*\theta$, etc, 
to obtain $p(S^*)^n\theta\in M$ for all $n\geq0$.

2)
For $f\in F$, write 
$$
pf=pf(0)+p(f-f(0)). 
$$
By the orthogonality assumption \eqref{z12}, the two terms in the right hand side are orthogonal to one another, and so
\[
\norm{pf}^2=\abs{f(0)}^2+\norm{p(f-f(0))}^2
=\abs{f(0)}^2+\norm{pS^*f}^2. 
\label{c7}
\]
On the other hand, we obviously have 
$$
\norm{f}^2=\abs{f(0)}^2+\norm{S^*f}^2; 
$$
subtracting, we obtain
$$
\norm{pf}^2-\norm{f}^2=\norm{pS^*f}^2-\norm{S^*f}^2. 
$$
Thus, we have an implication
$$
f\in F, \quad \norm{pf}=\norm{f} \quad \Rightarrow\quad
\norm{pS^*f}=\norm{S^*f}. 
$$
Applying this to $f=\theta$, then to $f=S^*\theta$, etc., we obtain
\[
\norm{p(S^*)^n \theta}=\norm{(S^*)^n\theta}, \quad \forall n\geq0.
\label{c7b}
\]

3) In order to extend the above relation to linear combinations of elements
$(S^*)^n \theta$, we need to prove that the set 
$$
F_0=\{f\in F: \norm{pf}=\norm{f}\}
$$
is linear. 
Here we use the argument of \cite{Hitt}. 
As a first step, let us check the inequality 
\[
\norm{pf}\geq \norm{f}\quad \forall f\in F.
\label{c7a}
\]
Rewrite \eqref{c7} as
$$
\norm{pf}^2=\abs{\wh f_0}^2+\norm{pS^*f}^2
$$
and apply this to $S^*f$ in place of $f$: 
$$
\norm{pS^*f}^2
=\abs{\wh f_1}^2+\norm{p(S^*)^2f}^2. 
$$
Iterating and summing, we obtain
$$
\norm{pf}^2=\sum_{n=0}^N \abs{\wh f_n}^2
+\norm{p(S^*)^{N+1}f}^2
\geq 
\sum_{n=0}^N \abs{\wh f_n}^2. 
$$
Sending $N\to\infty$, we obtain \eqref{c7a}. 

4) 
Consider the linear operator
$$
T_{1/p}: pF\to F.
$$
By \eqref{c7a}, this is a contraction. 
It is straightforward to see that $F_0$ can be
characterised as the image of $\Ker (T_{1/p}^*T_{1/p}-I)$ under $T_{1/p}$; thus, $F_0$ is linear. 
Thus, the isometry relation \eqref{c7b} extends to all linear combinations of elements $(S^*)^n\theta$. 
In other words, we obtain that the map
$$
T_p: F\to pF\subset M
$$
is an isometry. Since $F$ is dense in $\Ran H_\theta$, this map extends as an isometry 
$$
T_p: \Ran H_\theta\to p\Ran H_\theta\subset M. 
$$

\subsection{Proof of $p\Ran H_\theta=M$}

Consider the subspace
$$
V=M\cap (p\Ran H_\theta)^\perp. 
$$
Our aim is to prove that $V=\{0\}$. 

1)
Let us first prove that $V$ 
is $S^*$-invariant. 
Let $h\in V$. Then $h\in M\cap p^\perp$, and so $S^*h\in M\cap q^\perp$. 
It suffices to check that 
$(S^*h,h')=0$ for all $h'=pf$, $f\in \Ran H_\theta\cap \calH^\infty$. 
For such $f$, write
$\overline{f}\theta=c\1+zw$ with $w\in \Ran H_\theta\cap \calH^\infty$;  then
$$
f=\overline{c}\theta+\overline{z}\overline{w}\theta,
$$
and so 
$$
h'=\overline{c}p\theta+p\overline{z}\overline{w}\theta
=\overline{c}q+p\overline{z}\overline{w}\theta. 
$$
Now 
$$
(S^*h,h')=c(S^*h,q)+(S^*h,p\overline{z}\overline{w}\theta). 
$$
Here the first term vanishes as $S^*h\in M\cap q^\perp$.
For the second term, we have $SS^*h=h$ and so 
$$
(S^*h,p\overline{z}\overline{w}\theta)
=
(h,p\overline{w}\theta)=0,
$$
because $h\perp p\Ran H_\theta$. 

2) Thus, $V$ is $S^*$-invariant and 
 $V\perp \1$ 
(as $V\subset M\cap p^\perp\subset S(M\cap q^\perp)$). 
But any subspace satisfying these two conditions is trivial, since 
for any $h\in V$ we have 
$$
\wh h(n)=(h,S^n \1)=((S^*)^nh,\1)=0
$$
for all $n$. Thus, $V=\{0\}$. 
The proof of Theorem~\ref{thm.z2} is complete.

\section{Proof of Theorem~\ref{thm.z1}}\label{sec.b}

\subsection{Lemmas on subspaces}

\begin{proof}[Proof of Lemma~\ref{lma.z1}]
1) 
Assume $u\not\perp E_H(s)$; let us prove that $u\perp E_K(s)$ and 
$$
E_K(s)=E_H(s)\cap u^\perp.
$$
Because $K_u^2=H_u^2-(\cdot, u)u$, it is clear that 
$$
E_H(s)\cap u^\perp= E_K(s)\cap u^\perp.
$$ 
In particular, $E_H(s)\cap u^\perp \subset E_K(s)$; 
let us prove the converse inclusion.
Let $h\in E_K(s)$, and let $h'\in E_H(s)$ be such that $(h', u)\neq 0$. Then
\begin{align*}
s^2(h, h')&=(K_u^2h, h')=(H_u^2h, h')-(h, u)(u,h')
\\
&=(h, H_u^2h')-(h, u)(u, h')=s^2(h,h')-(h, u)(u, h')\ ,
\end{align*}
which implies that $h$ is orthogonal to $u$. Hence $E_K(s)\perp u$, and consequently 
$$
E_K(s)=E_K(s)\cap u^\perp=E_H(s)\cap u^\perp\subset E_H(s),
$$ 
as required. 

2) 
Assume $u\not\perp E_K(s)$; let us prove that $u\perp E_H(s)$ and 
$$
E_H(s)=E_K(s)\cap u^\perp.
$$
By the same reasoning as above we have $E_K(s)\cap u^\perp\subset E_H(s)$. 
To prove the converse inclusion, let $h\in E_H(s)$ and let $h'\in E_K(s)$ 
with $(h',u)\not=0$. 
Then, as at the previous step, we have
$$
s^2(h,h')=(H_u^2h,h')=(K_u^2u,h')+(h,u)(u,h')
=
s^2(h,h')+(h,u)(u,h'),
$$
and so $h\perp u$. This gives the inclusion $E_H(s)\subset E_K(s)\cap u^\perp$. 

3)
Finally, let us prove that $u$ cannot be orthogonal to both $E_H(s)$ and $E_K(s)$. 
If it is, then 
$$
E_H(s)=E_H(s)\cap u^\perp=E_K(s)\cap u^\perp=E_K(s).
$$
Denote this subspace by $V$; let us prove that $V=\{0\}$. 
Both $H_u$ and $K_u$ are anti-linear isomorphisms on $V$. 
Given $h\in V$, write $h=H_uh'$ with $h'\in V$. Then $S^*h=K_uh'\in V$. Furthermore, 
$$
0=(h', u)=(h',H_u \1)=
(\1, H_u h')=(\1, h)\ .
$$ 
Hence $S^*(V)\subset V$ and $V\perp \1$. 
Thus, as at the last stage of the proof of Theorem~\ref{thm.z2} above, we obtain $V=\{0\}$. 
\end{proof}

The following lemma is fundamental for our construction. 
It will allow us to check the crucial ``geometric'' hypothesis $M\cap p^\perp=S(M\cap q^\perp)$ 
of Theorem~\ref{thm.z2}.
\begin{lemma}\label{crucial}
Let $E_H(s)+E_K(s)\not=\{0\}$. Then 
\begin{align*}
S(E_H(s)\cap u^\perp )&=E_H(s)\cap \1^\perp,\quad\text{ if $s$ is $H$-dominant,}
\\
S(E_K(s)\cap (K_u u)^\perp)&=E_K(s)\cap u^\perp,\quad\text{ if $s$ is $K$-dominant.}
\end{align*}
\end{lemma}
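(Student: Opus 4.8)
The plan is to prove each of the two set equalities by establishing the two inclusions separately, and the whole argument rests on three elementary facts: the intertwining relations $H_uS=K_u=S^*H_u$, the identity $SS^*=I-(\cdot,\1)\1$, and the symmetry $(H_uf,g)=(H_ug,f)$ together with its counterpart for $K_u$. First I would record two ``helper'' identities that do all the work:
\begin{align*}
g\perp u &\ \Rightarrow\ SK_ug=H_ug, \\
f\perp \1 &\ \Rightarrow\ K_uS^*f=H_uf.
\end{align*}
The first follows from $SK_ug=SS^*H_ug=H_ug-(H_ug,\1)\1$ together with $(H_ug,\1)=(u,g)=0$; the second from $K_uS^*f=H_u(SS^*f)=H_uf$. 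I would also use throughout that $H_u$ preserves $E_H(s)$ and $K_u$ preserves $E_K(s)$ (each operator commutes with its square), and that on $E_H(s)$ the map $s^{-1}H_u$ is a bijective anti-linear involution, so in particular $H_uE_H(s)=E_H(s)$.

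For the $H$-dominant case, recall from Lemma~\ref{lma.z1} that $E_K(s)=E_H(s)\cap u^\perp$ and $u\perp E_K(s)$. For the forward inclusion take $h\in E_H(s)\cap u^\perp=E_K(s)$; then $(Sh,\1)=(h,S^*\1)=0$, and since $H_u(Sh)=K_uh$ I need only compute $H_u^2(Sh)=H_u(K_uh)$. Here $K_uh\in E_K(s)\perp u$, so the first helper applied to $g=K_uh$ gives $H_u(K_uh)=SK_u^2h=s^2Sh$, whence $Sh\in E_H(s)\cap\1^\perp$. For the reverse inclusion take $f\in E_H(s)\cap\1^\perp$; since $f\perp\1$ we may write $f=Sg$ with $g=S^*f$, and the second helper gives $K_ug=H_uf$, so $K_u^2g=K_u(H_uf)=S^*H_u^2f=s^2g$; thus $g\in E_K(s)=E_H(s)\cap u^\perp$, which is the required form.

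For the $K$-dominant case, Lemma~\ref{lma.z1} gives $E_H(s)=E_K(s)\cap u^\perp$ and $u\perp E_H(s)$. The forward inclusion is the place where orthogonality to $K_uu$ (rather than to $\1$) is used: for $h\in E_K(s)\cap(K_uu)^\perp$ the symmetry of $K_u$ yields $(K_uh,u)=(K_uu,h)=0$, so $K_uh\perp u$ and the first helper again gives $H_u^2(Sh)=H_u(K_uh)=SK_u^2h=s^2Sh$; hence $Sh\in E_H(s)=E_K(s)\cap u^\perp$. The reverse inclusion needs the preliminary fact that $E_H(s)\perp\1$ in this case: since $E_H(s)\perp u$, the first helper shows $H_uE_H(s)\subset\1^\perp$, and because $H_uE_H(s)=E_H(s)$ we deduce $E_H(s)\perp\1$. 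Granting this, for $f\in E_K(s)\cap u^\perp=E_H(s)$ we write $f=Sg$ with $g=S^*f$; the second helper gives $K_ug=H_uf$, whence $g\in E_K(s)$ exactly as before, and moreover $(K_uu,g)=(K_ug,u)=(H_uf,u)=0$ since $H_uf\in E_H(s)\perp u$. Thus $g\in E_K(s)\cap(K_uu)^\perp$ and $f=Sg$ lies in the left-hand side.

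The step I expect to be the main obstacle is the reverse inclusion in the $K$-dominant case, and precisely the sub-claim $E_H(s)\perp\1$: without it one cannot even write an element of $E_H(s)$ in the form $Sg$, yet it is not self-evident from the definitions. It is exactly here that the involution property $H_uE_H(s)=E_H(s)$ must be combined with the first helper identity. Once the correct orthogonality in each case is extracted via the symmetry of $K_u$, everything else reduces to mechanical application of the two helper identities.
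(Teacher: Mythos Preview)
Your proof is correct and follows essentially the same approach as the paper: both arguments rest on the intertwining $H_uS=K_u=S^*H_u$, the identity $SS^*=I-(\cdot,\1)\1$, and the symmetry of $H_u$ and $K_u$, and both verify each inclusion by the same underlying computations. The only difference is organisational: you package the two key computations as reusable ``helper'' identities and invoke the involution $H_uE_H(s)=E_H(s)$ to get $E_H(s)\perp\1$ in the $K$-dominant case, whereas the paper reproves each instance inline (for the forward inclusions it writes $H_u^2Sg-s^2Sg=c\1$ and then evaluates $c$; for $E_H(s)\perp\1$ it computes $(u,H_ug)=(H_u^2g,\1)=s^2(g,\1)$ directly).
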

\begin{proof}
1) Let $s$ be $H$-dominant; 
let us check that $S E_K(s)\subset E_H(s)\cap\1^\perp$. 
Let $g\in E_K(s)$; we need to prove that $H_u^2 Sg=s^2 Sg$. In order to do this, observe that 
$$
S^*(H_u^2Sg-s^2Sg)=S^*H_u^2Sg-s^2 S^*Sg=K_u^2g-s^2g=0,
$$
and so $H_u^2Sg-s^2Sg\in\Ker S^*$, i.e. $H_u^2Sg-s^2Sg=c\1$. 
Let us compute $c$:
$$
c=(H_u^2Sg-s^2Sg,\1)=(H_u^2Sg,\1)=(H_u\1,H_uSg)=(u,K_ug)=0,
$$
because $K_ug\in E_K(s)$. We have proved that $Sg\in E_H(s)$. 
Finally, it is obvious that $S E_K(s)\perp\1$. 

2) Let $s$ be $H$-dominant; 
let us check that $E_H(s)\cap \1^\perp\subset SE_K(s)$. 
Let $h\in E_H(s)\cap \1^\perp$; then $h=Sg$ and $g=S^*h$. 
We need to check that $g\in E_K(s)$, i.e. that $K_u^2g=s^2 g$. 
We have
$$
K_u^2g=S^*H_u^2Sg=S^*H_u^2h=s^2S^*h=s^2g,
$$
as required. 

3) Let $s$ be $K$-dominant; 
let us check that $S(E_K(s)\cap (K_u u)^\perp)\subset E_H(s)$. 
Let $h\in E_K(s)\cap (K_u u)^\perp$ and $g=Sh$. In order to prove that $H_u^2g=s^2g$, consider
$$
S^*(H_u^2g-s^2g)=S^*H_u^2Sh-s^2 S^*g=K_u^2h-s^2h=0.
$$
It follows that $H_u^2g-s^2g=c\1$. Let us compute $c$: 
\begin{multline*}
c=(H_u^2g-s^2g,\1)=(H_u^2g,\1)=(H_u\1,H_ug)
\\
=(u,H_uSh)=(u,K_uh)=(h,K_uu)=0,
\end{multline*}
as required. 

4) Let $s$ be $K$-dominant;
let us check that $E_H(s)\subset S(E_K(s)\cap (K_u u)^\perp)$. 
Take $g\in E_H(s)$; let us first check that $(g,\1)=0$. Since $H_ug\in E_H(s)\subset u^\perp$, we have
$$
0=(u,H_ug)=(H_u\1,H_ug)=(H_u^2g,\1)=s^2(g,\1),
$$
as claimed. Thus, $g=Sh$; let us prove that $h\in E_K(s)$ and $h\perp K_u u$. 
To see the first inclusion, we compute
$$
K_u^2h-s^2h=S^*H_u^2Sh-s^2h=S^*H_u^2g-s^2h=s^2S^*g-s^2h=0.
$$
Finally, 
$$
(h,K_uu)=(u,K_uh)=(u,H_uSh)=(u,H_ug)=0.
$$
\end{proof}

\subsection{Proof of Theorem~\ref{thm.z1}(i)}

1)
Let $P_s$ be the orthogonal projection onto $E_{H}(s)$. Then $P_s$ commutes with $H_u$ and therefore
$$
u_s=P_su=P_sH_u\1=H_uP_s\1=H_u\1_s. 
$$
Furthermore, we have
$$
\norm{u_s}^2=(H_u\1_s,H_u\1_s)=(H_u^2\1_s,\1_s)=s^2\norm{\1_s}^2. 
$$
Next, let us apply Theorem~\ref{thm.z2}(i)  with $M=E_H(s)$, $p=\1_s/\norm{\1_s}$ and $q=u_s/\norm{u_s}$. 
The geometric hypothesis $M\cap p^\perp =S(M\cap q^\perp)$ is satisfied by 
Lemma~\ref{crucial}. 

Thus, we obtain 
$$
\frac{u_s}{\norm{u_s}}=\psi_s\frac{\1_s}{\norm{\1_s}}
$$
with some inner function $\psi_s$. This can be rewritten in the form
$$
u_s=s\psi_s \1_s. 
$$

2)
Let us apply Theorem~\ref{thm.z2}(ii) with the same parameters. 
The orthogonality assumption \eqref{z12} is evidently satisfied:
$$
(pf, p)=(pf,\1_s)/\norm{\1_s}=(pf,\1)/\norm{\1_s}=0
$$
if $f(0)=0$. 
This yields the description $E_H(s)=\1_s\Ran H_{\psi_s}$ of our Schmidt subspace
and the formula $\norm{f\1_s}=\norm{f}\norm{\1_s}$ for all $f\in \Ran H_{\psi_s}$. 

Let us  check formula \eqref{z7} for the action of $H_u$ on $E_H(s)$ 
It suffices to check it on the dense set of elements $f\in \Ran H_{\psi_s}\cap \calH^\infty$. 
We have, using \eqref{b0}
$$
H_uT_{\1_s}f=H_u(f\1_s)=P(\overline{f}H_u \1_s)=P(\overline{f}u_s)=sP(\overline{f}\psi_s\1_s). 
$$
Since $\overline{f}\psi_s\in \calH^\infty(\bbT)$, we obtain \eqref{z7}:
$$
H_uT_{\1_s}f=s\overline{f}\psi_s\1_s=s\1_sH_{\psi_s} f=sT_{\1_s}H_{\psi_s}f. 
$$

3) It remains to describe the subspace $E_K(s)$ and the action of $K_u$ on this subspace. 
By Lemma~\ref{lma.z1}, we have $E_K(s)=E_H(s)\cap u^\perp$; on the other hand, by 
Theorem~\ref{thm.z2}, 
multiplication by $\1_s/\norm{\1_s}$ is a unitary operator from $\Ran H_{\psi_s}$ to $E_H(s)$. 
Thus, for $f\in \Ran H_{\psi_s}$, the product $\1_sf$ belongs to $E_K(s)$ if and only if
$$
0=(\1_s f,u)=(\1_s f,u_s)=s(\1_s f,\1_s\psi_s)=s\norm{\1_s}^2(f,\psi_s), 
$$
i.e. if and only if $f\in \Ran H_{\psi_s}\cap \psi_s^\perp$. 
But $f\perp \psi_s$ can be equivalently rewritten as $\overline{f}\psi_s\perp\1$; 
this relation means that $f=H_{\psi_s}(zw)$ with some $w\in \Ran H_{\psi_s}$. 
Thus, 
$$
\Ran H_{\psi_s}\cap \psi_s^\perp=\Ran H_{\psi_s}S=
S^*\Ran H_{\psi_s}=\Ran K_{\psi_s}, 
$$
as claimed. 

Let us check formula \eqref{z4} for the action of $K_u$ on $E_K(s)$. 
Let $f\in \Ran H_{\psi_s}\cap \psi_s^\perp$; then $H_{\psi_s}f\perp \1$, and so 
$$
S^*(\1_s H_{\psi_s}f)=\1_s S^*H_{\psi_s} f=\1_s K_{\psi_s}f.
$$
Now using formula \eqref{z7} for the action of $H_u$ on $E_H(s)$ 
it follows that
$$
K_u(\1_sf)=S^*H_u(\1_s f)=sS^*(\1_s H_{\psi_s}f)=s\1_sK_{\psi_s}f, 
$$
as required.

\subsection{Proof of Theorem~\ref{thm.z1}(ii)}
1)
Let $P_s$ be the orthogonal projection onto $E_K(s)$. Then $P_s$ commutes with $K_u$
and therefore $P_sK_u u=K_u P_s u=K_u\wt u_s$. Furthermore, we have 
$$
\norm{K_u\wt u_s}^2=(K_u^2\wt u_s,\wt u_s)=s^2\norm{\wt u_s}^2. 
$$
Let us apply Theorem~\ref{thm.z2}(i) with $M=E_K(s)$, 
$p=\wt u_s/\norm{\wt u_s}$ and 
$q=K_u\wt u_s/\norm{K_u\wt u_s}$. 
The geometric condition $M\cap p^\perp=S(M\cap q^\perp)$ is satisfied 
by Lemma~\ref{crucial}. 
We obtain 
$$
\frac{K_u\wt u_s}{\norm{K_u\wt u_s}}=\wt \psi_s\frac{\wt u_s}{\norm{\wt u_s}}
$$
with some inner function $\wt \psi_s$. This can be rewritten as  
$$
K_u\wt u_s=s\wt \psi_s\wt u_s.
$$

2) 
We would like to apply Theorem~\ref{thm.z2}(ii) with the same parameters. 
We need to check the orthogonality assumption \eqref{z12}: 
for $f\in \Ran H_{\wt\psi_s}\cap\calH^\infty$, $\wt u_s f\in E_K(s)$ and
$f(0)=0$ implies $\wt u_s f\perp \wt u_s$. 
This is a little more complicated than the analogous step in the $H$-dominant case. 
Write $f=zw$, with $w=S^*f\in \Ran H_{\wt \psi_s}\cap \calH^\infty$. 
We have 
\[
H_u(\wt u_s f)=H_u(z\wt u_s w)=K_u(\wt u_s w)=
P(\overline{w}K_u \wt u_s)=
sP(\overline{w}\wt \psi_s \wt u_s). 
\label{b16}
\]
Since $\overline{w}\wt \psi_s\in \calH^\infty$, we have 
\[
H_u(\wt u_s f)=
s\wt u_s\wt \psi_s\overline{w}=
s\wt u_s\wt \psi_sz\overline{f}\perp\1.
\label{b17}
\]
Thus, 
$$
(\wt u_s f,\wt u_s)=(\wt u_s f, u)=(\wt u_s f, H_u\1)=(\1,H_u(\wt u_s f))=0,
$$
as required. 

3) 
Now we can  apply Theorem~\ref{thm.z2}(ii),
which gives $\norm{f\wt u_s}=\norm{f}\norm{\wt u_s}$ 
for all $f\in \Ran H_{\wt\psi_s}$ and 
$$
E_K(s)=\wt u_s\Ran H_{\wt\psi_s}.
$$
Let us check formula \eqref{z6} for the action of $K_u$ on $E_K(s)$; 
this is a calculation similar to the above: 
$$
K_u(\wt u_s f)
=P(\overline{f}K_u \wt u_s)
=sP(\overline{f}\wt \psi_s\wt u_s)
=s\overline{f}\wt \psi_s\wt u_s=sH_{\wt \psi_s}f\wt u_s,
$$
as required.

4) It remains to describe the subspace $E_H(s)$ and the action of $H_u$ on this subspace. 
By Lemma~\ref{lma.z1}, we have $E_H(s)=E_K(s)\cap \wt u_s^\perp$; on the other hand, 
multiplication by $\wt u_s/\norm{\wt u_s}$ 
is an isometry from $\Ran H_{\wt\psi_s}$ onto $E_K(s)$. 
Thus, for $f\in \Ran H_{\wt\psi_s}$, the product $f\wt u_s$ belongs to $E_H(s)$ if and only if
$$
(f\wt u_s,\wt u_s)=0\quad \Leftrightarrow \quad (f,\1)=0.
$$
Now it is easy to see that
$$
\Ran H_{\wt \psi_s}\cap\1^\perp
=
SS^*\Ran H_{\wt \psi_s}
=
S\Ran K_{\wt \psi_s}. 
$$

Finally, formula \eqref{z5} for the action of $H_u$ on $E_H(s)$
has already been checked in \eqref{b16}--\eqref{b17}. 
The proof of Theorem~\ref{thm.z1} is now complete.

\section{Proof of Theorem~\ref{thm.z0}(i) and of Corollary~\ref{cor.z0}}\label{sec.aaa}

\subsection{Frostman shifts and the $K$-dominant case}

Formulas for the transformation of model spaces under the Frostman shift seem to be folklore among 
experts; the precise result we need can be found in \cite{Crofoot}:

\begin{theorem}\cite[Theorem 10]{Crofoot}\label{thm.z2a}
\begin{enumerate}[\rm (i)]
\item
Let $\theta$ be an inner function on $\bbD$, and let $w\in \bbD$. 
Let 
$$
\alpha_w(z)=\frac{w-z}{1-\overline{w}z}, 
\quad
g_w(z)=\frac{\sqrt{1-\abs{w}^2}}{1-\overline{w}z}. 
$$
Then $g_w\circ\theta$ is an isometric multiplier from $\calK_\theta$ onto $\calK_{\alpha_w\circ\theta}$, 
i.e. 
$$
(g_w\circ\theta) \calK_\theta=\calK_{\alpha_w\circ\theta}. 
$$
\item
Let $\theta_1$, $\theta_2$ be non-constant inner functions on $\bbD$. 
If there exists an isometric multiplier from $\calK_{\theta_1}$ onto $\calK_{\theta_2}$, then 
$\theta_1=\alpha\circ\theta_2$ for some disk automorphism $\alpha$. 
If $p_1$ and $p_2$ are two such multipliers, then $p_2=\gamma p_1$ for 
a unimodular constant $\gamma$. 
\end{enumerate}
\end{theorem}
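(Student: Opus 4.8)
The plan is to handle the two parts separately, working throughout with reproducing kernels. Recall that the model space $\calK_\theta$ has reproducing kernel $k^\theta_\zeta(z)=\frac{1-\overline{\theta(\zeta)}\theta(z)}{1-\overline{\zeta}z}$, so that $(f,k^\theta_\zeta)=f(\zeta)$ for $f\in\calK_\theta$ and the family $\{k^\theta_\zeta\}_{\zeta\in\bbD}$ spans $\calK_\theta$. For part (i), the first step is the elementary Möbius identity
$$1-\overline{\alpha_w(a)}\,\alpha_w(b)=\frac{(1-\abs{w}^2)(1-\overline{a}b)}{(1-w\overline{a})(1-\overline{w}b)},\qquad a,b\in\bbD,$$
verified by clearing denominators. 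Substituting $a=\theta(\zeta)$, $b=\theta(z)$ and using $g_w(t)=\sqrt{1-\abs{w}^2}/(1-\overline{w}t)$, this rearranges into the clean relation
$$k^{\alpha_w\circ\theta}_\zeta=\overline{g_w(\theta(\zeta))}\,(g_w\circ\theta)\,k^\theta_\zeta.$$
In particular $(g_w\circ\theta)k^\theta_\zeta$ is a scalar multiple of $k^{\alpha_w\circ\theta}_\zeta\in\calK_{\alpha_w\circ\theta}$, and a direct computation with this relation shows that multiplication by $g_w\circ\theta$ preserves inner products, $\bigl((g_w\circ\theta)k^\theta_\zeta,(g_w\circ\theta)k^\theta_\xi\bigr)=(k^\theta_\zeta,k^\theta_\xi)$. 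Since the $k^\theta_\zeta$ span $\calK_\theta$ and their images span $\calK_{\alpha_w\circ\theta}$, this establishes that $g_w\circ\theta$ is an isometric multiplier of $\calK_\theta$ onto $\calK_{\alpha_w\circ\theta}$.

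For part (ii), let $T_pf=pf$ be an isometric multiplier of $\calK_{\theta_1}$ onto $\calK_{\theta_2}$, so that $T_p$ is unitary. Computing $(f,T_p^*k^{\theta_2}_\zeta)=(T_pf,k^{\theta_2}_\zeta)=p(\zeta)f(\zeta)$ gives $T_p^*k^{\theta_2}_\zeta=\overline{p(\zeta)}\,k^{\theta_1}_\zeta$. Applying the fact that $T_p^*$ is also an isometry to the pair $k^{\theta_2}_\zeta,k^{\theta_2}_\xi$ and cancelling the common factor $(1-\overline{\zeta}\xi)^{-1}$ yields the scalar functional equation
$$1-\overline{\theta_2(\zeta)}\theta_2(\xi)=\overline{p(\zeta)}\,p(\xi)\,\bigl(1-\overline{\theta_1(\zeta)}\theta_1(\xi)\bigr),\qquad \zeta,\xi\in\bbD.$$
Regrouping, this reads
$$1+\overline{(p\theta_1)(\zeta)}\,(p\theta_1)(\xi)=\overline{\theta_2(\zeta)}\theta_2(\xi)+\overline{p(\zeta)}p(\xi),$$
which says exactly that the $\bbC^2$-valued curves $a(\xi)=(1,(p\theta_1)(\xi))$ and $b(\xi)=(\theta_2(\xi),p(\xi))$ have identical Gram matrices.

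From equal Gram matrices one obtains a linear isometry between the closed spans of $\{a(\xi)\}$ and $\{b(\xi)\}$; extending it to a unitary $U=(u_{ij})$ on all of $\bbC^2$ (possible since the two spans have equal dimension) gives $b(\xi)=U a(\xi)$ for every $\xi$. The two resulting identities $\theta_2=u_{11}+u_{12}\,p\theta_1$ and $p=u_{21}+u_{22}\,p\theta_1$ can be solved: the second gives $p=u_{21}/(1-u_{22}\theta_1)$, and substituting into the first produces $\theta_2=\beta\circ\theta_1$ with $\beta$ the fractional linear map $\beta(t)=(u_{11}-(\det U)t)/(1-u_{22}t)$. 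Since $\theta_1,\theta_2$ are inner and the boundary values of $\theta_1$ carry full harmonic measure on $\bbT$, $\beta$ must preserve the unit circle and hence is a disk automorphism; setting $\alpha=\beta^{-1}$ gives $\theta_1=\alpha\circ\theta_2$. For the uniqueness statement, if $p_1,p_2$ are two such multipliers, then writing the functional equation above for each and dividing gives $\overline{p_1(\zeta)}p_1(\xi)=\overline{p_2(\zeta)}p_2(\xi)$ for all $\zeta,\xi\in\bbD$; this rank-one kernel identity forces $p_2/p_1$ to be constant, and taking $\zeta=\xi$ shows the constant is unimodular.

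I expect the main obstacle to be this final stage of part (ii): passing rigorously from the operator-level unitarity to the algebraic relation $\theta_2=\beta\circ\theta_1$ via the Gram-matrix argument, and then confirming that $\beta$ is genuinely a disk automorphism rather than an arbitrary Möbius transformation of $\bbC$ — the latter being precisely where the inner nature of $\theta_1,\theta_2$, through the harmonic-measure property of inner boundary values, must be invoked.
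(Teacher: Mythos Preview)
This theorem is not proved in the paper at all; it is quoted from Crofoot~\cite{Crofoot} as a known result and then applied, so there is no in-paper argument to compare against. Your reproducing-kernel proof is correct and is essentially the approach in~\cite{Crofoot}.

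One remark on the step you flag as the main obstacle. The harmonic-measure argument is valid (the boundary-value distribution of a non-constant inner function has full support on~$\bbT$), but it can be bypassed: for any $2\times2$ unitary $U$ one has $u_{22}=(\det U)\,\overline{u_{11}}$, and since $|u_{22}|<1$ (the case $|u_{22}|=1$ forces $u_{21}=0$ and hence $p\equiv0$ from your second coordinate identity), the map
\[
\beta(t)=\frac{u_{11}-(\det U)\,t}{1-u_{22}\,t}
= -\det U\cdot\frac{t-\overline{\det U}\,u_{11}}{1-\overline{(\overline{\det U}\,u_{11})}\,t}
\]
is seen to be a disk automorphism by inspection. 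The two-dimensionality of the spans is likewise automatic: if $p\theta_1$ were constant, the Gram identity would force $b(\xi)=(\theta_2(\xi),p(\xi))$ to be constant as well, contradicting the non-constancy of $\theta_2$.
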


In particular, if both $\theta_1$ and $\theta_2$ in Theorem~\ref{thm.z2a}(ii) 
vanish at the origin, then $\theta_1=\gamma'\theta_2$ for a unimodular
constant $\gamma'$. 
It follows that the parameters $p$ and $\theta$ in the weighted model space $p\Ran H_\theta$
are uniquely defined up to unimodular multiplicative constants, as claimed in Remark~\ref{rmk.z3} above.

\begin{proof}[Proof of Theorem~\ref{thm.z0}(i)]
As already explained, if $s$ is $H$-dominant, then the first part of Theorem~\ref{thm.z1}
immediately provides the required decomposition \eqref{z10a} with 
$p=\1_s/\norm{\1_s}$ and $\theta=\psi_s$. 
Uniqueness of $\psi_s$ is an easy consequence of Remark~\ref{rmk.z3}. 
Indeed, by this Remark, if $\psi_s'$ is another inner function satisfying
\eqref{z6a}, then $\psi_s'=e^{i\alpha}\psi_s$ for some $\alpha\in\bbR$. 
By \eqref{z7}, we have
$$
T_{\1_s}H_{\psi_s}=T_{\1_s}H_{\psi_s'} \text{ on $\Ran H_{\psi_s}$,}
$$
which implies $\psi_s=\psi_s'$.

Let us consider the $K$-dominant case. 
We first observe that for an inner function $\psi$, 
$$
\Ran K_\psi=S^*\Ran H_{\psi}=S^*\calK_{z\psi}=\calK_\psi. 
$$
Next, taking $w=\psi(0)$ in Theorem~\ref{thm.z2a}(i), we obtain
$$
\calK_\psi=\frac1{g_w\circ\psi}\calK_{\alpha_w\circ\psi}=\frac{1-\overline{\psi(0)}\psi}{\sqrt{1-\abs{\psi(0)}^2}}\Ran H_{S^*(\alpha_w\circ\psi)}, 
$$
with the multiplier $1/(g_w\circ\psi)$ acting isometrically on the model space in the right hand side. 
Applying this to $\psi=\wt\psi_s$, by Theorem~\ref{thm.z1}(ii) we arrive at
$$
E_H(s)=\wt u_s S\Ran K_{\wt\psi_s}=p\Ran H_\theta, 
$$
$$
\theta(z)=\frac{\wt\psi_s(z)-\wt\psi_s(0)}{z\bigl(1-\overline{\wt\psi_s(0)}\wt\psi_s(z)\bigr)}, 
\quad
p(z)=z\wt u_s(z)\bigl(1-\overline{\wt\psi_s(0)}\wt\psi_s(z)\bigl), 
$$
where $p/\norm{p}$ is an isometric multiplier on $\Ran H_\theta$. 

Let us check formula for the action of $H_u$ on $E_H(s)$. By \eqref{z5}, we have
$$
H_u(\wt u_s f)=s\wt u_s SH_{\wt \psi_s} f, \quad f\in S\Ran K_{\wt\psi_s}. 
$$
Write
$$
f=z\bigl(1-\overline{\wt\psi_s(0)}\wt \psi_s\bigr)h, 
\quad h\in \Ran H_\theta.
$$
Now we have
$$
SH_{\wt\psi_s}f
=z\wt \psi_s\overline{f}
=\wt\psi_s(1-\wt\psi_s(0)\overline{\wt\psi_s})\overline{h}
=(\wt\psi_s-\wt\psi_s(0))\overline{h}
=z(1-\overline{\wt\psi_s(0)}\wt \psi_s)\theta\overline{h}.
$$
Thus, we obtain
$$
H_u(ph)
=H_u(\wt u_s f)
=s\wt u_s  z(1-\overline{\wt\psi_s(0)}\psi_s)\theta\overline{h}
=sp\theta\overline{h}, 
$$
as required. Uniqueness follows from Remark~\ref{rmk.z3} as in the first
part of the proof.
\end{proof}

\subsection{The self-adjoint case}

\begin{proof}[Proof of Corollary~\ref{cor.z0}]
Let $s=\abs{\lambda}$; then 
$$
\Ker (H_u-\lambda I)\subset E_{H}(s). 
$$
Clearly, $H_u$ maps $\calH^2_\real$ to $\calH^2_\real$, and therefore the spectral projections
of $H_u^2$ satisfy the same property. In particular, the orthogonal projection $P_s$ onto $E_{H}(s)$
satisfies this property. 
By Theorem~\ref{thm.z0}, we have
$$
E_H(s)=p\Ran H_\theta
$$
for some $\theta$ and $p$. We recall that by Remark~\ref{rmk.z3}, the functions
$\theta$ and $p$ are uniquely defined up to unimodular multiplicative constants. 
Let us check that this constant in the definition of $p$ can be chosen such that $p\in\calH^2_\real$. 

Let $m$ be the smallest integer such that $p^{(m)}(0)\not=0$; then we have
$p(z)=z^m w(z)$ with $w\in \calH^2$ and $w(0)\not=0$.  
Then for any $f\in \Ran H_\theta$, 
setting $g=pf$, we get 
$$
(g,\overline{w(0)}p)=w(0)(pf,p)=w(0)(f,\1)=w(0)f(0)=(z^m wf,z^m)=(g,z^m), 
$$
which shows that $\overline{w(0)}p=P_sz^m$. 
Thus, we can choose
$$
p=\frac{P_s z^m}{\norm{P_sz^m}}
$$
which gives $p\in\calH^2_\real$. 

Now let us choose the unimodular constant in the definition of $\theta$ such that 
$$
H_u(p)=\lambda p\theta;
$$
then $\theta\in \calH^2_\real$ and on $\Ran H_\theta$ we have 
$$
H_uT_p=\lambda T_pH_\theta.
$$
The elements of $\Ker (H_u-\lambda I)\cap \calH^2_\real$ are 
exactly the elements of the form $pf$, where $f\in\Ran H_\theta\cap \calH^2_\real$ satisfies
$$
H_uT_pf=\lambda T_pf, 
$$
which is equivalent to the condition $H_\theta f=f$. 
\end{proof}

\section{An Example}\label{sec.example}

Here we take a break from our main line of exposition to discuss a simple special case
when $H_u$ and $K_u$ have only one or two singular values.  
In this case, it is easy to describe all relevant subspaces in an elementary way. 
In a companion paper \cite{paperB}, we consider the case when 
both $H_u$ and $K_u$ have finitely many singular values and give explicit formulas
for the symbol in terms of the singular values and the inner functions $\psi_s$, $\wt\psi_s$ appearing in 
Theorem~\ref{thm.z1}. 

\begin{theorem}\label{thm.ex}
Let $s$, $\wt s$ be positive numbers and $\psi$, $\wt \psi$ be inner functions. 
The following conditions are equivalent for $u\in \BMOA(\bbT)$: 
\begin{enumerate}[\rm (i)]
\item
The pair $(H_u,K_u)$ has precisely two positive singular values $s$, $\wt s$, 
with $s$ being $H$-dominant and $\wt s$ being $K$-dominant, and 
$\psi=\psi_s$ and $\wt\psi=\wt\psi_{\wt s}$ are the parameters of  Theorem~\ref{thm.z1}. 
\item
We have $s>\wt s>0$ and 
$$
u(z)=\frac{(s^2-\wt s\,^2)\psi(z)}{s-\wt s z\psi(z)\wt \psi(z)}, \quad z\in \bbD. 
$$
\end{enumerate}
\end{theorem}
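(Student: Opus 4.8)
The plan is to prove the two implications separately, throughout writing $\Phi=z\psi\wt\psi$ (an inner function with $\Phi(0)=0$) and $D=s-\wt s\,\Phi$, so that the claimed symbol reads $u=(s^2-\wt s^2)\psi/D$. The single computation driving everything is the Szeg\H o projection of $1/\abs{D}^2$: expanding $1/D=\tfrac1s\sum_{m\ge0}(\wt s/s)^m\Phi^m$ and $1/\overline{D}=\tfrac1s\sum_{n\ge0}(\wt s/s)^n\overline{\Phi}^{\,n}$ on $\bbT$ and using $P(\Phi^m\overline{\Phi}^{\,n})=\Phi^{m-n}$ for $m\ge n$ and $=0$ for $m<n$ (because $\Phi(0)=0$), one obtains
\[
P\Bigl(\tfrac{1}{\abs{D}^2}\Bigr)=\frac{s}{(s^2-\wt s^2)\,D}.
\]
I will use this identity, and its variants carrying an extra inner factor, to evaluate all the relevant Hankel actions.

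For (ii)$\Rightarrow$(i): since $\abs{D}\ge s-\wt s>0$ on $\bbT$ we have $u\in L^\infty\subset\BMOA$, so $H_u$ is bounded. Setting $p=\sqrt{s^2-\wt s^2}/D$, Sarason's description \eqref{z10ab} (with $\theta=z\psi$, $a=\sqrt{s^2-\wt s^2}/s$, $b=\wt s\wt\psi/s$, so $\abs{a}^2+\abs{b}^2=1$) shows $p$ is an isometric multiplier on $\Ran H_\psi=\calK_{z\psi}$; likewise $u/\norm{u}$ is an isometric multiplier on $\Ran H_{\wt\psi}$, where $\norm{u}^2=s^2-\wt s^2$ by the displayed formula. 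Evaluating $H_up=P(u\overline{p})$ with this projection formula gives the normalisation $H_up=s\psi p$, whence by \eqref{b0} $H_uT_p=sT_pH_\psi$ on $\Ran H_\psi$; this exhibits $p\,\Ran H_\psi\subseteq E_H(s)$, and $s$ is $H$-dominant since $u=\sqrt{s^2-\wt s^2}\,\psi p\in p\,\Ran H_\psi$. An analogous computation, using $K_u=S^*H_u$ and $S^*p=(\wt s/s)\psi\wt\psi\,p$, yields $K_uu=\wt s\wt\psi\,u$ and $u\,\Ran H_{\wt\psi}\subseteq E_K(\wt s)$, with $\wt s$ $K$-dominant. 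To see these are the only singular values I compute the kernel directly: $f\in\Ker H_u\Leftrightarrow\overline{u}f\in z\calH^2$, and clearing denominators gives $\Ker H_u=(\wt s-s\Phi)\calH^2=\psi_0\calH^2$, with $\psi_0$ the inner part of $\wt s-s\Phi$, of degree $1+\deg\psi+\deg\wt\psi$. Since $\dim\bigl(p\,\Ran H_\psi+(u/\norm{u})(\Ran H_{\wt\psi}\cap\1^\perp)\bigr)$ equals this same number (with the obvious modification when a degree is infinite), the two Schmidt subspaces exhaust $(\Ker H_u)^\perp$, so $s,\wt s$ are the only singular values; that $\psi,\wt\psi$ are the parameters $\psi_s,\wt\psi_{\wt s}$ of Theorem~\ref{thm.z1} then follows from the normalisation $H_up=s\psi p$ and the uniqueness in Remark~\ref{rmk.z3}.

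For (i)$\Rightarrow$(ii): Theorem~\ref{thm.z1}(i) gives $u_s:=P_{E_H(s)}u=s\psi\1_s$ with $H_u\1_s=u_s$, and Theorem~\ref{thm.z1}(ii) gives $K_u\wt u_s=\wt s\wt\psi\,\wt u_s$ for $\wt u_s:=P_{E_K(\wt s)}u$. The crux is to show $u=u_s$, i.e. $u\in E_H(s)$. Here $u=H_u\1$ is orthogonal to $\Ker H_u$ by symmetry of $H_u$, while $K$-dominance of $\wt s$ gives $E_H(\wt s)=E_K(\wt s)\cap u^\perp\subseteq u^\perp$; as $\{s^2,\wt s^2\}$ and $\{0\}$ exhaust the spectrum of $H_u^2$, this forces $u\in E_H(s)$. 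Consequently $g:=\overline{\psi}u=s\1_s\in\calH^2$, so $u=\psi g$ and $H_uu=s\overline{\psi}u=sg$ by \eqref{b0}. Feeding $H_u^2u=s^2u$ into \eqref{a5} gives $K_u^2u=(s^2-\norm{u}^2)u$, so $u$ is an eigenvector of $K_u^2$; as $u\not\perp E_K(\wt s)$ its eigenvalue must be $\wt s^2$, whence $\norm{u}^2=s^2-\wt s^2$ (so $s>\wt s$) and $u=\wt u_s$, giving $K_uu=\wt s\wt\psi u$. Finally $K_u=S^*H_u$ turns the two eigen-relations into $sS^*g=\wt s\psi\wt\psi\,g$, i.e. $s\bigl(g-g(0)\bigr)=\wt s\,\Phi g$, so $g=sg(0)/D$ and $u=\psi g=sg(0)\,\psi/D$. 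Evaluating the constant from $g(0)=s\1_s(0)=s\norm{\1_s}^2$ and $\norm{u}^2=s^2\norm{\1_s}^2=s^2-\wt s^2$ gives $g(0)=(s^2-\wt s^2)/s$, hence $u=(s^2-\wt s^2)\psi/D$, as claimed.

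The main obstacle is precisely the step $u\in E_H(s)$ in the converse direction: it is here that the hypothesis of having \emph{only} the two singular values $s,\wt s$ must be used in full strength, to guarantee that the point spectrum $\{s^2,\wt s^2\}$ together with $\Ker H_u^2$ exhausts $\calH^2(\bbT)$ and that $u$ carries no spectral mass elsewhere. For the explicit symbol of (ii) this is automatic, since the kernel computation shows $(\Ker H_u)^\perp$ is spanned by the $s$- and $\wt s$-eigenvectors even when $H_u$ is non-compact. Everything else reduces to the single projection identity above and to bookkeeping with Theorem~\ref{thm.z1}, Remark~\ref{rmk.z3} and the isometric-multiplier description \eqref{z10ab}.
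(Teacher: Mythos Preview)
Your argument for (i)$\Rightarrow$(ii) is essentially the paper's, repackaged: you divide through by $\psi$ at the outset (setting $g=\overline\psi\,u=s\1_s$) and solve the resulting first-order relation $s(g-g(0))=\wt s\,\Phi g$ for $g$, whereas the paper keeps $u$ throughout and multiplies the identity $SK_uu=H_uu-\norm{u}^2\1$ by $\psi$ at the end. Both lead to the same linear equation and the same normalisation of the constant.

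For (ii)$\Rightarrow$(i), your geometric-series evaluation of $P\bigl(\psi/\abs{D}^2\bigr)$ is equivalent to the paper's partial-fraction identity
$$
\frac{(s^2-\wt s\,^2)\zeta}{(s-\wt s\zeta)(s\zeta-\wt s)}=\frac{s}{s-\wt s\zeta}+\frac{\wt s}{s\zeta-\wt s}
$$
applied with $\zeta=\Phi$; both yield $H_u h=s\psi h$ for $h=1/D$. Your additional appeal to Sarason's description \eqref{z10ab} to exhibit $p$ and $u/\norm{u}$ as isometric multipliers is correct but not needed: the paper simply records $h,u\in E_H(s)$ and $u\in E_K(\wt s)$, which suffices.

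The one place you genuinely diverge, and where there is a gap, is the argument that $s,\wt s$ are the \emph{only} singular values. The paper's route is one line and is already available to you: once $u\in E_H(s)$ and $u\in E_K(\wt s)$, orthogonality of distinct eigenspaces forces $u\perp E_H(s')$ for every singular value $s'\neq s$ and $u\perp E_K(s')$ for every $s'\neq\wt s$; hence $s$ is the unique $H$-dominant value and $\wt s$ the unique $K$-dominant one, and Lemma~\ref{lma.z1} then excludes any further singular value. Your kernel-plus-dimension argument, by contrast, is incomplete. First, the identity $\Ker H_u=(\wt s-s\Phi)\calH^2$ is not literally true; one must pass to the inner factor $\psi_0$ of $\wt s-s\Phi$, and to obtain $\Ker H_u=\psi_0\calH^2$ you still need $\psi_0$ and $\wt\psi$ coprime (so that $\psi_0\mid\wt\psi f$ reduces to $\psi_0\mid f$), which requires an argument, especially if singular inner factors are present. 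Second, and more seriously, the dimension count collapses when $\deg\psi$ or $\deg\wt\psi$ is infinite: both $(\Ker H_u)^\perp$ and your candidate span are then infinite-dimensional, and equality of cardinals says nothing about equality of subspaces. The ``obvious modification'' you allude to is not obvious. Replacing this step by the dominance argument above closes the gap cleanly and uniformly.
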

\begin{proof}
Assume (i). 
Since $s$ is the only $H$-dominant singular value, the spectral decomposition of $H_u^2$ implies
$u=u_s\in E_{H}(s)$, hence $u=s\psi\1_s$. 
Multiplying $H_u^2\1_s=s^2\1_s$ by $\psi$, we obtain 
\[
\psi H_u u=su. 
\label{ex.1}
\]
Applying the identity $K_u^2h=H_u^2h-(h,u)u$ to $h=u$, we infer
$$
K_u^2 u=(s^2-\norm{u}^2)u. 
$$
It follows that $u=\wt u_{\wt s}$ and 
\[
s^2-\norm{u}^2=\wt s\,^2.
\label{ex.2}
\]
This gives $s>\wt s$. 
Further, by our assumption, it follows that 
\[
K_u u=\wt s\wt \psi u. 
\label{ex.3}
\]
Applying $SS^*f=f-(f,\1)\1$ to $f=H_u u$, we obtain
$$
SK_u u=H_u u-\norm{u}^2\1. 
$$
Multiplying this identity by $\psi$ and applying \eqref{ex.1}, \eqref{ex.2}, \eqref{ex.3}, 
we conclude
$$
\wt sz\psi(z)\wt\psi(z)u(z)=su(z)-(s^2-\wt s\,^2)\psi(z). 
$$
This gives the formula for $u$ in (ii). 

Assume (ii). 
Denote 
$$
h(z)=\frac1{s-\wt sz\psi(z)\wt\psi(z)}. 
$$
Let us compute $H_u h$. Performing the computations on the unit circle $\abs{z}=1$, we have
\begin{align*}
H_u h&=(s^2-\wt s\,^2)P\biggl(\frac{\psi}{(s-\wt s z\psi\wt\psi)(s-\wt s\overline{z}\overline{\psi}\overline{\wt \psi})}\biggr)
\\
&=(s^2-\wt s\,^2)P\biggl(\psi\, \frac{z\psi\wt\psi}{(s-\wt s z\psi\wt\psi)(sz\psi\wt\psi-\wt s)}\biggr).
\end{align*}
Applying the elementary identity
$$
\frac{(s^2-\wt s\,^2)\zeta}{(s-\wt s\zeta)(s\zeta-\wt s)}=\frac{s}{s-\wt s\zeta}+\frac{\wt s}{s\zeta-\wt s}
$$
to $\zeta=z\psi(z)\wt\psi(z)$, we observe that
\begin{align*}
(s^2-\wt s\,^2)\psi\, \frac{z\psi\wt\psi}{(s-\wt s z\psi\wt\psi)(sz\psi\wt\psi-\wt s)}
&=
\frac{s\psi}{s-\wt sz\psi\wt\psi}+\frac{\wt s\psi}{sz\psi\wt\psi-\wt s}
\\
&=
\frac{s\psi}{s-\wt sz\psi\wt\psi}+\frac{\wt s\overline{z}\overline{\wt\psi}}{s-\wt s\overline{z}\overline{\psi}\overline{\wt\psi}}.
\end{align*}
We note that the first term in the right hand side is in $\calH^2(\bbT)$ 
and the second one is orthogonal to $\calH^2(\bbT)$. 
Consequently, 
$$
H_u h=\frac{s\psi}{s-\wt sz\psi\wt\psi}=s\psi h=\frac{s}{s^2-\wt s\,^2}u.
$$
Using \eqref{b0}, we obtain 
$$
H_u^2 h=H_u(s\psi h)=sP(\overline{\psi}H_u h)=sP(\overline{\psi}s\psi h)=s^2 h, 
$$
and so $h,u\in E_H(s)$. Therefore $s$ is the only $H$-dominant singular value with the inner parameter of Theorem~\ref{thm.z1} being
$\psi_s=\psi$.
 
It remains to compute 
$$
K_u u=S^*H_u u=(s^2-\wt s\,^2)sS^*h. 
$$
Observe that 
$$
sh(z)=\frac{1}{1-\tfrac{\wt s}{s}z\psi\wt\psi}=1+\frac{\wt sz\psi\wt\psi}{s-\wt sz\psi\wt\psi}, 
$$
and so 
$$
sS^* h=\frac{\wt s\psi\wt\psi}{s-\wt sz\psi\wt\psi}.
$$
This gives 
$$
K_u u =\wt s\wt \psi u.
$$
Using \eqref{b0}, from here we get
$$
K_u^2 u=\wt s K_u(\wt \psi u)=\wt s P(\overline{\wt \psi} K_u u)=\wt s^2 u.
$$
It follows that $\wt s$ is the only $K$-dominant singular value with the inner parameter 
of Theorem~\ref{thm.z1} being $\wt\psi_{\wt s}=\wt \psi$. 
\end{proof}

Finally, we justify the claim made in Remark~\ref{rmk.z7}. 
Let $u$ be as in the above Theorem. 
Observe that $\wt u_{\wt s}=u$ and the inner-outer factorisation of $u$ is given by 
$$
u(z)=\psi(z)u_0(z), \quad u_0(z)=\frac{s^2-\wt s^2}{s-\wt s z\psi(z)\wt \psi(z)}. 
$$
Thus, by Theorem~\ref{thm.z1}(ii), we have the weighted model space representation
for the Schmidt space 
$$
E_{K_u}(s)=E_{H_{S^*u}}(s)=u\Ran H_{\wt\psi_s}=\psi u_0\Ran H_{\wt\psi}. 
$$
According to Theorem~\ref{thm.ex}, the inner functions $\psi$, $\wt\psi$ here can be chosen in an arbitrary way.

\section{Proof of Theorem~\ref{thm.z0}(ii)}\label{sec.aab}
Our arguments in this section follow closely the original AAK paper \cite{AAK3}; 
however, the last part of the proof is somewhat simpler, avoiding the perturbation 
argument of \cite{AAK3}. 
Throughout the section, we use the commutation relation \eqref{b0a}. 
We denote by $n(s;H_u)$ (resp. by $n[s;H_u]$) the total multiplicity of the spectrum of
the self-adjoint operator $\abs{H_u}$ in the interval $(s,\infty)$ (resp. $[s,\infty)$). 

\subsection{The case $s=\norm{H_u}$}

The first statement concerns the case $s=\norm{H_u}$; 
it contains Theorem~\ref{thm.z0}(ii) in the case $n(s;H_u)=0$. 

\begin{lemma}\label{lma.aab4}
Let $u\in \BMOA(\bbT)$, $s=\norm{H_u}>0$ and $E_H(s)\not=\{0\}$. 
If $f\in E_H(s)$ and $a$ is an inner divisor of $f$, then $f/a\in E_H(s)$ and 
\[
H_u(f/a)=T_aH_u f. 
\label{aab3}
\]
Furthermore, in the weighted model space representation $E_H(s)=p\Ran H_\theta$, the 
isometric multiplier $p$ is an outer function.
\end{lemma}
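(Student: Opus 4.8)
The plan is to build everything on three facts: the commutation relation \eqref{b0a}, which for an inner function $a$ reads $H_uT_a=T_{\overline a}H_u$; the fact that $T_a$ is then an isometry while $T_{\overline a}=T_a^*$ is a contraction, with $T_aT_{\overline a}$ equal to the orthogonal projection onto $a\calH^2$; and the fact that, because $s=\norm{H_u}$, the space $E_H(s)$ is the \emph{top} eigenspace of the positive operator $H_u^2$. The last point means that membership in $E_H(s)$ is detected by a single norm identity: since $0\le H_u^2\le s^2I$ and $\norm{H_uh}^2=(H_u^2h,h)$ (from the symmetry $(H_uf,g)=(H_ug,f)$ applied with $f=H_uh$), the equality $\norm{H_uh}=s\norm{h}$ is equivalent to $h\in E_H(s)$.

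For the first assertion I would write $f=ah=T_ah$ with $h=f/a$, and use the commutation relation to get $H_uf=T_{\overline a}H_uh=P(\overline a\,H_uh)$. Since $a$ is inner, $\norm{h}=\norm{f}$, and since $f\in E_H(s)$ I have $\norm{H_uf}=s\norm{f}$; combined with $\norm{T_{\overline a}}\le1$ and $\norm{H_uh}\le s\norm{h}$ this produces the chain
$$
s\norm{h}=s\norm{f}=\norm{H_uf}=\norm{T_{\overline a}H_uh}\le\norm{H_uh}\le s\norm{h},
$$
forcing every inequality to be an equality. The equality on the right gives $h\in E_H(s)$ by the detection criterion above. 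The equality $\norm{P(\overline a\,H_uh)}=\norm{\overline a\,H_uh}$ says the Szeg\H o projection loses no mass, so $\overline a\,H_uh\in\calH^2$, i.e. $H_uh\in a\calH^2$; applying $T_a$ to $H_uf=T_{\overline a}H_uh$ then yields \eqref{aab3}, since $T_aT_{\overline a}$ acts as the identity on $a\calH^2$.

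For the outer claim I would first note $\1\in\Ran H_\theta$, so $p=p\cdot\1\in p\Ran H_\theta=E_H(s)$ by Theorem~\ref{thm.z0}(i). Writing $p=\varphi p_0$ for the inner--outer factorisation and applying the first part with $f=p$ and $a=\varphi$ gives $p_0\in E_H(s)$. Because $g\mapsto pg$ is a bijection of $\Ran H_\theta$ onto $E_H(s)$, there is a $g\in\Ran H_\theta$ with $p_0=pg=\varphi p_0g$; dividing by the a.e.\ nonvanishing outer function $p_0$ forces $\varphi g\equiv1$, so $g=1/\varphi=\overline\varphi$ on $\bbT$. As $g\in\calH^2$, this is possible only if $\varphi$ is constant, whence $p$ is outer.

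The step I expect to be the crux is the equality analysis in the contraction: identifying $\norm{T_{\overline a}H_uh}=\norm{H_uh}$ with the divisibility $\overline a\,H_uh\in\calH^2$ is exactly what simultaneously delivers the explicit formula \eqref{aab3} and, upon taking $a=\varphi$, the outerness of $p$. The anti-linear bookkeeping giving $\norm{H_uh}^2=(H_u^2h,h)$ is routine but worth stating explicitly, since it underlies the membership criterion for the top eigenspace that the whole argument relies on.
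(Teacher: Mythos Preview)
Your proof is correct and follows essentially the same route as the paper's: the norm characterisation of the top eigenspace, the commutation relation $H_uT_a=T_{\overline a}H_u$, the chain of inequalities forcing equality, and the projection identity $T_aT_{\overline a}H_u(f/a)=H_u(f/a)$ to recover \eqref{aab3}. Your treatment of the outer claim is slightly more explicit than the paper's (which simply says ``this implies $p_0=p$''), but the underlying argument is identical.
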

\begin{proof}
First observe that since $s=\norm{H_u}$, condition $f\in E_H(s)$ is equivalent to 
$$
\norm{H_u(f)}=s\norm{f}.
$$ 
Next, let $f=af_0\in E_H(s)$, where $a$ is inner and  $f_0\in \calH^2$.
We have 
$$
T_{\overline{a}}H_u f_0=H_uT_a f_0=H_u f
$$
and therefore 
$$
s\norm{f_0}=s\norm{f}
=
\norm{H_u f}
=
\norm{T_{\overline{a}}H_u f_0}
\leq
\norm{H_u f_0}; 
$$
it follows that $s\norm{f_0}=\norm{H_u f_0}$ and therefore $f_0\in E_H(s)$. 
Furthermore,  using the fact that 
$T_a T_{\overline{a}}$ is an orthogonal 
projection (as $T_a$ is an isometry), 
from 
$\norm{T_aT_{\overline{a}}H_u f_0}=\norm{H_u f_0}$
we conclude
$$
T_a T_{\overline{a}}H_u f_0=H_u f_0. 
$$
The last identity can be rewritten as
$$
T_a H_u f=H_u f_0,
$$
which is the same as \eqref{aab3}. 
Finally, applying the above statement to $f=p$, we obtain that the outer factor $p_0$ of $p$ is in 
$E_H(s)$, and so it can be represented as
$p_0=ph$ with $h\in \Ran H_\theta$. 
This implies that $p_0=p$. 
\end{proof}

\subsection{Introducing the AAK unimodular symbol}\label{sec.aab2}

\begin{lemma}\label{lma.aab2}
For $u\in \BMOA(\bbT)$ and $s>0$, let $f\in E_H(s)$, $f\not=0$. 
Then the ratio $\phi=H_u f/(s\overline{f})$ is a unimodular function 
on $\bbT$ which is independent of the choice of $f\in E_H(s)$.  
\end{lemma}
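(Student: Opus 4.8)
The plan is to prove two things: first, that $\phi := H_u f/(s\overline{f})$ is unimodular on $\bbT$; second, that it is independent of $f \in E_H(s)$. I expect the second part to be the genuine content, while the first is essentially bookkeeping.

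For unimodularity, I would work pointwise on the circle. Since $f \in E_H(s)$ means $H_u^2 f = s^2 f$, and $H_u g = \proj(u\overline{g})$, I would like to relate $|H_u f|$ to $|f|$ pointwise. The natural route is to write $g := H_u f/s$, so that $g \in E_H(s)$ as well (because $H_u^2 g = s^2 g$ follows from $H_u^2 f = s^2 f$ and $H_u g = H_u^2 f/s = s f$), and we have the symmetric pair $H_u f = s g$, $H_u g = s f$. Then $\phi = g/\overline{f}$. To see $|\phi| = 1$ a.e., I would use that $H_u f = \proj(u\overline{f})$ and that the anti-analytic part is killed by $\proj$; the cleanest approach is to exploit the identity $\overline{f} H_u f = \overline{f}\,\proj(u\overline{f})$ together with the symmetry $(H_u f, g) = (H_u g, f)$, aiming to show $\overline{f} H_u f = f H_u\overline{f}$-type relations force $|H_u f|/|f|$ to be constant equal to $s$ on $\bbT$. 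Concretely, I would compute $\Re(\overline{\phi f}\cdot s g)$ or compare $s^2|f|^2 = |H_u f|^2$ by writing $|H_u f|^2 = (u\overline{f})\overline{H_u f}$ after projecting, and use that the difference $u\overline{f} - H_u f$ is orthogonal to $\calH^2$, so that integrating against appropriate test functions yields $|H_u f| = s|f|$ a.e.\ on $\bbT$, giving $|\phi| = 1$.

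For independence of $f$, the strategy is bilinear. Take two nonzero elements $f_1, f_2 \in E_H(s)$ with $g_i = H_u f_i/s$. I would show the cross relation $g_1 \overline{f_2} = g_2 \overline{f_1}$ on $\bbT$, which immediately gives $\phi_1 = g_1/\overline{f_1} = g_2/\overline{f_2} = \phi_2$ wherever the functions are nonzero, and hence everywhere by the unimodularity just established. To obtain $g_1\overline{f_2} = g_2\overline{f_1}$, the key computation is that $\overline{f_2}\, g_1 - \overline{f_1}\, g_2$ is both in the range of $\proj$-orthogonality and its conjugate, forcing it to vanish. Specifically, using $H_u f_1 = \proj(u\overline{f_1})$, the function $s g_1 - u\overline{f_1}$ is orthogonal to $\calH^2$, i.e.\ lies in $\overline{z\calH^2}$; multiplying by $\overline{f_2}$ and symmetrizing in the indices $1,2$, the symbol $u$ cancels and one is left with the assertion that $\overline{f_2} g_1 - \overline{f_1} g_2$ lies simultaneously in $\calH^2 \cdot \overline{\calH^2}$-type spaces on both sides; a Fourier/degree argument (comparing the analytic and anti-analytic parts) then forces it to be zero.

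The main obstacle, as I see it, is the cancellation argument in the independence step: making rigorous that $\overline{f_2}(u\overline{f_1} - s g_1) = \overline{f_1}(u\overline{f_2} - s g_2)$ combined with the Hankel symmetry $(H_u f_1, f_2) = (H_u f_2, f_1)$ really pins down $\overline{f_2} g_1 = \overline{f_1} g_2$, rather than merely their equality after projection. The delicate point is that $f_1, f_2$ need not be bounded, so products like $\overline{f_2} g_1$ are a priori only in $L^1(\bbT)$, and one must justify the manipulations in $L^1$ or approximate by bounded elements. I would handle this by first treating the case where one of the $f_i$ is an inner multiple of the other (or by reducing to test against $\calH^\infty$ functions using the density and the identity \eqref{b0}), and then removing the boundedness assumption by a limiting argument. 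The unimodularity of $\phi$ guarantees the quotient is well-defined in $L^1$, so once $\overline{f_2} g_1 = \overline{f_1} g_2$ is known a.e.\ the conclusion is immediate.
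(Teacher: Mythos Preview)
Your plan correctly identifies the cross relation $\overline{f_1}g_2 = \overline{f_2}g_1$ (with $g_i = H_u f_i/s$) as the heart of the matter, and your approach to it --- cancel $u$ using $u\overline{f_i} - sg_i \in \overline{z\calH^2}$, then argue that the difference is anti-analytic and, by the $f\leftrightarrow g$ symmetry, also analytic --- is viable. The integrability obstacle you flag is real but avoidable: the paper never forms the triple product $u\overline{f_1}\overline{f_2}$. Instead it tests the $L^1$ function $g_1\overline{f_2}$ directly against $z^n$ for $n\ge0$ and computes
\begin{multline*}
s(g_1\overline{f_2},z^n)=(H_uf_1,z^nf_2)=((S^*)^nH_uf_1,f_2)\\
=(H_uS^nf_1,f_2)=(H_uf_2,S^nf_1)=s(g_2\overline{f_1},z^n),
\end{multline*}
using only the Hankel identity $H_uS=S^*H_u$ and the symmetry $(H_ua,b)=(H_ub,a)$. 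The same computation with $f$ and $g$ interchanged handles $n<0$. This is exactly the Fourier-coefficient version of your argument, but it stays within $L^2\times L^2$ pairings throughout, so no approximation is needed.

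The genuine gap is in your treatment of unimodularity. It is not bookkeeping, and nothing in your sketch passes from the $L^2$ identity $\norm{H_uf}=s\norm{f}$ to the pointwise identity $\abs{H_uf}=s\abs{f}$; the projection and test-function manipulations you describe do not force this. The paper's route is to reverse your order: prove the cross identity first, then specialise it to $f_2=g_1$. Since $H_ug_1 = H_u^2f_1/s = sf_1$, this choice gives $g_2 = f_1$, and the cross identity becomes $\overline{f_1}f_1=\overline{g_1}g_1$, i.e.\ $\abs{\phi}=1$ a.e. So unimodularity really is a one-line consequence --- but of the independence step, not a preliminary to it.
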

\begin{proof} 
In order to prove that $\phi$ is independent of the choice of $f$, take $f_1,f_2\in E_H(s)$
and let $H_u f_1 =sg_1$, $H_u f_2 =sg_2$; it suffices to check that
\[
\overline{f_1}g_2=\overline{f_2}g_1. 
\label{aab2}
\]
For $n\geq 0$, we have
\begin{multline*}
s(g_1\overline{f_2},z^n)
=
(H_u f_1,z^nf_2)
=
((S^*)^nH_u f_1,f_2)
=
(H_uS^nf_1,f_2)
\\
=
(H_uf_2,S^nf_1)
=
s(g_2\overline{f_1},z^n).
\end{multline*}
Similarly, we get
$$
(\overline{g_1}f_2,z^n)=(\overline{g_2}f_1,z^n), \quad n\geq0,
$$
and so \eqref{aab2} follows. Finally, the fact that $\abs{\phi}=1$ comes from 
applying \eqref{aab2} to $f_2=g_1$. 
\end{proof}

Let $\phi$ be as in the previous lemma and set $v=sP(\phi)$.
\begin{lemma}\label{lma.aab5}
Under the above conditions, 
$$
n(s;H_u)\leq 
\rank H_{u-v}\leq 
\deg\varphi.
$$
\end{lemma}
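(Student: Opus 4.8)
The plan is to prove the two inequalities separately, both resting on a single computation of the Hankel operator $H_v$ attached to the truncated symbol $v=sP(\phi)$. First I would record the identity $H_v f=sP(\phi\overline f)$ for every $f\in\calH^2(\bbT)$: writing $\phi=P(\phi)+(\phi-P(\phi))$, the second summand has only strictly negative Fourier modes, so when multiplied by $\overline f$ (whose modes are $\leq 0$) and projected by $P$ it contributes nothing, whence $P(v\overline f)=sP(\phi\overline f)$. Two consequences follow from the fact that $\phi$ is unimodular (Lemma~\ref{lma.aab2}). On the one hand $\norm{H_v f}=s\norm{P(\phi\overline f)}\leq s\norm{\phi\overline f}=s\norm{f}$, so $\norm{H_v}\leq s$. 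On the other hand, for $f\in E_H(s)$ the definition of $\phi$ gives $H_u f=s\phi\overline f$, and since $H_u f\in\calH^2(\bbT)$ the product $\phi\overline f$ is already analytic, so $P(\phi\overline f)=\phi\overline f$ and $H_v f=s\phi\overline f=H_u f$. In particular $E_H(s)\subseteq\Ker H_{u-v}$.

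For the lower bound $n(s;H_u)\leq\rank H_{u-v}$ I would argue by approximation numbers. Since $H_u-H_{u-v}=H_v$ and $H_{u-v}$ is an anti-linear operator of rank $r:=\rank H_{u-v}$, the definition of $s_r(H_u)$ yields $s_r(H_u)\leq\norm{H_u-H_{u-v}}=\norm{H_v}\leq s$. Using the identification of the approximation numbers $s_k(H_u)$ with the singular values of $H_u$ (equivalently, the eigenvalues of $\abs{H_u}$ lying above its essential spectrum), the bound $s_r(H_u)\leq s$ means at most $r$ singular values exceed $s$, i.e. $n(s;H_u)\leq r$. If $r=\infty$ the inequality is vacuous.

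For the upper bound $\rank H_{u-v}\leq\deg\varphi$ I would combine the inclusion $E_H(s)\subseteq\Ker H_{u-v}$ with the shift structure. Being a Hankel operator, $H_{u-v}$ satisfies $H_{u-v}S=S^*H_{u-v}$, so $\Ker H_{u-v}$ is $S$-invariant and, by Beurling's theorem, equals $\beta\calH^2(\bbT)$ for some inner function $\beta$; hence $\rank H_{u-v}=\dim\calK_\beta=\deg\beta$. Now $\1\in\Ran H_\theta$ gives $p=p\cdot\1\in p\Ran H_\theta=E_H(s)\subseteq\beta\calH^2(\bbT)$, so $\beta$ divides the inner factor of $p$, which is precisely $\varphi$ (writing $p=\varphi p_0$ with $p_0$ outer). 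Therefore $\deg\beta\leq\deg\varphi$, completing the proof.

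The crux of the argument, and the step I would check most carefully, is the identity $H_v f=sP(\phi\overline f)$: it simultaneously yields $\norm{H_v}\leq s$ (contractivity of $P$ against a unimodular factor) and the inclusion $E_H(s)\subseteq\Ker H_{u-v}$ (triviality of $P$ on the already-analytic $\phi\overline f$), and these two facts drive the lower and upper bounds respectively. The only genuinely external input is the identification of the anti-linear approximation numbers $s_k(H_u)$ with the singular values of $H_u$, which I would invoke as standard; a minor point to handle is that $n(s;H_u)$ counts the open interval $(s,\infty)$, which is matched exactly by the non-strict inequality $s_r(H_u)\leq s$.
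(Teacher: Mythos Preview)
Your proof is correct and follows essentially the same route as the paper's. Both arguments hinge on the identity $H_v f=sP(\phi\overline f)$, deduce $\norm{H_v}\leq s$ and $E_H(s)\subset\Ker H_{u-v}$, then use $S$-invariance of $\Ker H_{u-v}$ together with $p\in\Ker H_{u-v}$ to get $\varphi\calH^2\subset\Ker H_{u-v}$ (you phrase this via Beurling as $\Ker H_{u-v}=\beta\calH^2$ with $\beta\mid\varphi$, the paper phrases it as the minimal $S$-invariant subspace containing $p$ being $\varphi\calH^2$); the lower bound is in both cases the Weyl-type estimate coming from $\norm{H_u-H_{u-v}}\leq s$, which you write via approximation numbers and the paper writes as $n(s;H_v+H_{u-v})\leq n(s;H_v)+\rank H_{u-v}$.
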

\begin{proof}
For any $f\in E_H(s)$ we have, by the definition of $\phi$, 
$$
H_v f=sP(\phi\overline{f})=H_u f
$$
and so the Hankel operator $H_{u-v}$ vanishes on the subspace $E_H(s)$. 
In particular, it vanishes on $p$ and consequently it vanishes on the minimal 
$S$-invariant subspace containing $p$. Since the inner factor of $p$ is $\varphi$, we conclude that
\[
\varphi\calH^2\subset \Ker H_{u-v}
\label{aab1}
\]
or equivalently 
$$
\Ran H_{u-v}\subset \calK_\varphi;
$$
it follows that 
$$
\rank H_{u-v}\leq \dim \calK_\varphi=\deg \varphi. 
$$
Further, we have 
$$
\norm{H_u-H_{u-v}}=\norm{H_v}\leq s\norm{\phi}_{L^\infty}=s.
$$
Putting this together, we obtain 
$$
n(s;H_u)=n(s;H_v+H_{u-v})\leq n(s;H_{v})+\rank H_{u-v}
\leq
0+ \deg \varphi=\deg\varphi,
$$
as required. 
\end{proof}

\subsection{Completing the proof}
Denote $w=T_{\overline{\varphi}} u$ and consider the Hankel operator
$$
H_w=H_uT_\varphi=T_{\overline{\varphi}}H_u.
$$
Observe that by \eqref{aab1} we have $H_w=H_vT_\varphi$ and so 
 $\norm{H_w}\leq \norm{H_v}\leq s$; in fact, by the following lemma we have $\norm{H_w}=s$. 
\begin{lemma}\label{lma.aab6}
We have 
$$
ap_0\Ran H_\theta\subset E_{H_w}(s)
$$
for any $a|\varphi$, i.e. for any inner divisor $a$ of $\varphi$. 
In particular, 
$$
E_{H_u}(s)=p\Ran H_\theta\subset E_{H_w}(s). 
$$
\end{lemma}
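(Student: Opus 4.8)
The plan is to prove the inclusion directly by verifying the eigenvalue equation $H_w^2(ap_0 g)=s^2\,ap_0 g$ for every $g\in\Ran H_\theta$ and every inner divisor $a$ of $\varphi$; the ``in particular'' assertion is then the special case $a=\varphi$, where $ap_0 g=pg$. The computational engine is the factorisation $H_w=H_uT_\varphi=T_{\overline\varphi}H_u$ recorded just above, together with the commutation relation \eqref{b0a}, the identity $H_u(pg)=s\,pH_\theta g$ coming from $H_uT_p=sT_pH_\theta$ in Theorem~\ref{thm.z0}(i), and the fact from Lemma~\ref{lma.a2} that $H_\theta$ is an involution on $\Ran H_\theta$, so that $H_\theta^2 g=g$.

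Before running the algebra I would settle what I expect to be the only genuine difficulty: that the products in question really lie in $\calH^2$. Writing $\varphi=ab$ with $a,b$ inner, the element $ap_0 g$ is in $\calH^2$ as soon as $p_0 g$ is, and this is equivalent to $E_H(s)\subset\varphi\calH^2$, i.e.\ to $\varphi\mid pg$ for every $g$. This is not automatic, since multiplying by an inner function can make a non-analytic function analytic. I would resolve it as follows. Since $\1\in\Ran H_\theta$, we have $p=p\1\in\calH^2$ with $\norm{p}=1$; its outer factor $p_0$ satisfies $\abs{p_0}=\abs{p}$ a.e.\ on $\bbT$, so $p_0\in\calH^2$. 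For $g\in\Ran H_\theta$ the product $p_0 g$ then lies in $\calH^1$, while on $\bbT$ one has $\abs{p_0 g}=\abs{pg}$ with $pg\in\calH^2$; hence the boundary function $p_0 g$ belongs to $L^2$, and a function in $\calH^1\cap L^2$ is in $\calH^2$. This gives $p_0 g\in\calH^2$, whence $E_H(s)\subset\varphi\calH^2$, and in particular $pH_\theta g\in\varphi\calH^2\subset a\calH^2$.

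With membership secured the computation is routine. Using $\varphi\,ap_0=ap$, the relation \eqref{b0a}, and $a\mid pH_\theta g$ (so the Szeg\H o projection in $T_{\overline a}$ acts trivially), I obtain
$$
H_w(ap_0 g)=H_u\bigl(a\,(pg)\bigr)=T_{\overline a}H_u(pg)=s\,T_{\overline a}(pH_\theta g)=s\,\overline a\,pH_\theta g=s\,bp_0\,\theta\overline g,
$$
where I used $\overline a p=\overline a\varphi p_0=bp_0$ on $\bbT$. Applying $H_w$ once more, now with $\varphi\,bp_0=bp$ and $H_\theta^2 g=g$,
$$
H_w^2(ap_0 g)=s\,H_u\bigl(b\,(pH_\theta g)\bigr)=s\,T_{\overline b}H_u(pH_\theta g)=s^2\,T_{\overline b}(pg)=s^2\,ap_0 g,
$$
since $H_u(pH_\theta g)=s\,pH_\theta^2 g=s\,pg$ and $\overline b\,pg=\overline b\,ab\,p_0 g=ap_0 g\in\calH^2$. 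Thus $ap_0 g\in\Ker(H_w^2-s^2I)=E_{H_w}(s)$, which is the claim, and taking $a=\varphi$ yields $E_{H_u}(s)=p\Ran H_\theta\subset E_{H_w}(s)$.

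The main obstacle is precisely the analytic membership step $E_H(s)\subset\varphi\calH^2$: once the projections $T_{\overline a}$ and $T_{\overline b}$ are known to reduce to honest multiplications, the two displayed identities are forced, and it is the elementary $\calH^1\cap L^2=\calH^2$ argument, rather than any operator-theoretic input, that does the essential work. I note in passing that this direct computation establishes the inclusion without invoking the bound $\norm{H_w}\le s$.
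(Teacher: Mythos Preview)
Your proof is correct. The route differs from the paper's in one respect: the paper first checks only the extreme cases $a=\varphi$ and $a=1$ by the direct computation
\[
H_w(pf)=sp_0H_\theta f,\qquad H_w(p_0H_\theta f)=spf,
\]
and then obtains all intermediate divisors $a\mid\varphi$ by invoking Lemma~\ref{lma.aab4} (applied to $H_w$, which requires the bound $\norm{H_w}\le s$ established via the AAK symbol $\phi$). You instead run the two-step computation for a general divisor $a$ (with complementary divisor $b=\varphi/a$), observing that $H_w$ sends $ap_0\Ran H_\theta$ to $bp_0\Ran H_\theta$ and back; this is slightly more hands-on but bypasses both Lemma~\ref{lma.aab4} and the norm bound, as you note. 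You are also more careful than the paper about the membership $p_0g\in\calH^2$: the paper uses this silently in the step $T_{\overline\varphi}(\varphi p_0H_\theta f)=p_0H_\theta f$, whereas your $\calH^1\cap L^2=\calH^2$ argument makes it explicit. Either approach is fine; yours is marginally more self-contained.
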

\begin{proof}
For every $f\in \Ran H_\theta$ we have
\begin{align*}
T_{\overline{\varphi}}H_u(pf)&=sT_{\overline{\varphi}}(pH_\theta f)= sT_{\overline{\varphi}}(\varphi p_0H_\theta f)
=sp_0H_\theta(f),
\\
T_{\overline{\varphi}}H_u(p_0H_\theta f)&=H_u(\varphi p_0H_\theta f)=H_u(pH_\theta f)=spH_\theta^2 f=
spf,
\end{align*}
and so $p\Ran H_\theta\subset E_{H_w}(s)$. Thus, we can apply Lemma~\ref{lma.aab4}
(with $H_w$ in place of $H_u$), which yields
that $ap_0f\in E_{H_w}(s)$ for any $a|\varphi$, 
as required. 
\end{proof}

At the last two steps of the proof below, we depart from the original argument of AAK. 
The AAK proof involves dimension count if $\deg\theta<\infty$ and a (rather tricky) approximation argument
if $\deg\theta=\infty$. Instead, we proceed by considering a quotient space, which works 
for all values of $\deg\theta$.

The following lemma is purely operator theoretic and does not use any specifics of Hankel operators. 
\begin{lemma}
With the above notation, we have
$$
\dim E_{H_w}(s)\ominus E_{H_u}(s)\leq n(s;H_u). 
$$
\end{lemma}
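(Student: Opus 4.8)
The plan is to exploit the factorisation $H_w^2=T_{\overline{\varphi}}H_u^2 T_\varphi$, which follows at once from the two presentations $H_w=H_uT_\varphi=T_{\overline{\varphi}}H_u$ together with the fact that $T_\varphi$ is an isometry with $T_\varphi^*=T_{\overline{\varphi}}$. First I would observe that for every $f\in E_{H_w}(s)$ one has $(H_u^2\varphi f,\varphi f)=(H_w^2 f,f)=s^2\norm{f}^2=s^2\norm{\varphi f}^2$. Since $(H_u^2\,\cdot\,,\cdot)-s^2(\cdot,\cdot)$ is a Hermitian form vanishing on the diagonal of the closed subspace $W:=\varphi E_{H_w}(s)=T_\varphi E_{H_w}(s)$, polarisation shows it vanishes identically on $W$; that is, $(H_u^2-s^2I)W\perp W$, so the compression of $H_u^2-s^2I$ to $W$ is zero.

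The heart of the matter is then an elementary operator-theoretic fact, which I would isolate as a separate lemma (it uses no special feature of Hankel operators). If $A$ is a bounded self-adjoint operator, $\lambda\in\bbR$, and $W$ a closed subspace with $(A-\lambda I)W\perp W$, then the map $w\mapsto \mathcal{E}_A((\lambda,\infty))w$ sends $W$ into $\Ran\mathcal{E}_A((\lambda,\infty))$ with kernel exactly $W_0:=W\cap\Ran\mathcal{E}_A(\{\lambda\})$. Indeed, $w\in\Ker$ forces $w\in\Ran\mathcal{E}_A((-\infty,\lambda])$, while the diagonal of the vanishing form gives $(Aw,w)=\lambda\norm{w}^2$; together these confine the spectral measure of $w$ to $\{\lambda\}$. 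Passing to the quotient $W/W_0$ (this is the quotient-space step) yields $\dim(W\ominus W_0)\le\rank\mathcal{E}_A((\lambda,\infty))$. Applying this with $A=H_u^2$, $\lambda=s^2$ and $W=\varphi E_{H_w}(s)$, and noting that $E_{H_u}(s)=\varphi\bigl(p_0\Ran H_\theta\bigr)\subseteq\varphi E_{H_w}(s)$ (since $p_0\Ran H_\theta\subseteq E_{H_w}(s)$ by Lemma~\ref{lma.aab6}) forces $W_0=W\cap E_{H_u}(s)=E_{H_u}(s)$, I obtain $\dim\bigl(\varphi E_{H_w}(s)\ominus E_{H_u}(s)\bigr)\le n(s;H_u)$. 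Transporting back by the isometry $T_\varphi$, whose inverse sends $E_{H_u}(s)=\varphi p_0\Ran H_\theta$ to $p_0\Ran H_\theta$, this becomes $\dim\bigl(E_{H_w}(s)\ominus p_0\Ran H_\theta\bigr)\le n(s;H_u)$.

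It remains to replace $p_0\Ran H_\theta$ by $E_{H_u}(s)=p\Ran H_\theta$. Here I would use that, because $s=\norm{H_w}$, the antilinear map $J:=s^{-1}H_w$ is an involution on $E_{H_w}(s)$, and the symmetry $(H_wf,g)=(H_wg,f)$ gives $\norm{H_wf}^2=(H_w^2f,f)=s^2\norm{f}^2$, so $J$ is antiunitary on $E_{H_w}(s)$. Using $H_uT_p=sT_pH_\theta$ and $H_w=T_{\overline{\varphi}}H_u$ one computes, for $f=ph$ with $h\in\Ran H_\theta$, that $Jf=p_0H_\theta h$; since $H_\theta$ is a bijection of $\Ran H_\theta$, $J$ maps $E_{H_u}(s)=p\Ran H_\theta$ onto $p_0\Ran H_\theta$. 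As an antiunitary preserves orthogonality, $J$ carries $E_{H_w}(s)\ominus E_{H_u}(s)$ onto $E_{H_w}(s)\ominus p_0\Ran H_\theta$, so the two complements have equal dimension, and combining with the previous paragraph gives the claim. I expect the main obstacle to be precisely this bookkeeping caused by the inner factor $\varphi$: the compression argument naturally produces the complement of $p_0\Ran H_\theta$ rather than of $E_{H_u}(s)$, and it is the antiunitary involution $s^{-1}H_w$ that reconciles the two; checking the compression hypothesis $(H_u^2-s^2I)W\perp W$ by polarisation is the other point demanding care.
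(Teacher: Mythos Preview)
Your argument is correct, but it is considerably more roundabout than the paper's. The paper compresses $H_u^2$ directly to $E:=E_{H_w}(s)$ (not to $\varphi E$). From $H_w^2=H_uT_\varphi T_{\overline\varphi}H_u\le H_u^2$ one gets only the inequality $A:=P_EH_u^2|_E\ge s^2I$, but that is enough: $\Ker(A-s^2I)=E_{H_u}(s)$ (since $f\in E$ with $(H_u^2f,f)=s^2\norm{f}^2$ forces $(H_u^2-H_w^2)^{1/2}f=0$ and hence $H_u^2f=s^2f$), so $\dim E\ominus E_{H_u}(s)=n(s^2;A)$, and the minimax principle gives $n(s^2;A)\le n(s^2;H_u^2)$. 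That is the whole proof.

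Your route---passing to $W=\varphi E_{H_w}(s)$ where the compression of $H_u^2$ is \emph{exactly} $s^2I$---does buy you a cleaner spectral picture (the compression is scalar rather than merely bounded below), but the price is that the natural complement you obtain is $E_{H_w}(s)\ominus p_0\Ran H_\theta$ rather than $E_{H_w}(s)\ominus E_{H_u}(s)$, and you then need the antiunitary involution $s^{-1}H_w$ to swap $p_0\Ran H_\theta$ with $p\Ran H_\theta$. That last step is correct (your computation $H_w(ph)=sp_0H_\theta h$ is right, and the symmetry $(H_wf,g)=(H_wg,f)$ indeed gives $(Jf,Jg)=(g,f)$), but it is extra machinery the paper avoids entirely by never introducing the factor $\varphi$ in the first place. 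The polarisation step you flagged as delicate is fine; the real detour is the bookkeeping with $\varphi$ that follows it.
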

\begin{proof}
On the space $E=E_{H_w}(s)$ we consider the self-adjoint operator 
$$
A=P_E H_u^2|_E,
$$ 
where
$P_E$ is the orthogonal projection onto $E$. 
First note that by a standard variational argument, 
$$
n(s^2; A)\leq n(s^2; H_u^2).
$$
This follows, for example, by writing the minimax principle in the form \cite[Theorem 10.2.3]{BS}
$$
n(s^2;A)=\sup\{\dim L: L\subset E, \, (Af,f)>s^2\norm{f}^2\quad \forall f\in L\setminus\{0\}\},
$$
and comparing with a similar expression for $n(s^2; H_u^2)$.

Next, we notice that
$$
H_w^2=H_u T_\varphi T_{\overline{\varphi}}H_u\leq H_u^2
$$
and therefore $A\geq s^2 I$. 
Since $E_{H_u}(s)\subset E$, it is straightforward to see that 
$$
E_{H_u}(s)=\Ker (A-s^2 I). 
$$
Putting this together, we obtain 
$$
\dim E\ominus E_{H_u}(s)=n(s^2; A)\leq n(s^2;H_u^2)=n(s;H_u),
$$
as required. 
\end{proof}

Our final step is 
\begin{lemma}
With the above notation, we have
$$
\deg \varphi\leq
\dim E_{H_w}(s)\ominus E_{H_u}(s).
$$
\end{lemma}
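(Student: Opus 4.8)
The plan is to compare the two Schmidt subspaces through the quotient of $\calH^2(\bbT)$ by the shift‑invariant subspace $\varphi\calH^2(\bbT)$, whose codimension of the denominator realises a space of dimension $\dim\calK_\varphi=\deg\varphi$. First I would record that every element of $E_{H_u}(s)$ is divisible by $\varphi$. Writing the inner–outer factorisation $p=\varphi p_0$ with $p_0$ outer, for $f\in\Ran H_\theta$ the product $pf=\varphi(p_0f)$ lies in $\calH^2$ and $\norm{pf}=\norm{f}$; since $|\varphi|=1$ on $\bbT$ we get $\norm{p_0f}_{L^2(\bbT)}=\norm{pf}_{L^2(\bbT)}<\infty$, while $p_0f\in\calH^1$ as a product of two $\calH^2$ functions. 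An $\calH^1$ function with $L^2$ boundary values is in $\calH^2$, so $p_0f\in\calH^2$ and hence $pf\in\varphi\calH^2$. This gives $E_{H_u}(s)=p\Ran H_\theta\subset\varphi\calH^2$.

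Next I would introduce the orthogonal projection $P_{\calK_\varphi}=I-T_\varphi T_{\overline\varphi}$ onto $\calK_\varphi$ and restrict it to $E_{H_w}(s)$, obtaining $\Pi:=P_{\calK_\varphi}|_{E_{H_w}(s)}\colon E_{H_w}(s)\to\calK_\varphi$. Its kernel is $E_{H_w}(s)\cap\varphi\calH^2$, which by the previous paragraph contains $E_{H_u}(s)$; therefore $\Pi$ descends to the quotient and $\rank\Pi\le\dim\bigl(E_{H_w}(s)\ominus E_{H_u}(s)\bigr)$. As $\dim\calK_\varphi=\deg\varphi$, the lemma will follow once I show that $\Pi$ has dense range, that is,
$$
\calK_\varphi\cap E_{H_w}(s)^\perp=\{0\}.
$$
(When $\deg\varphi=\infty$ a dense range already forces $\rank\Pi=\infty$, and when $\deg\varphi<\infty$ density is surjectivity, so in both cases $\rank\Pi=\deg\varphi$ results.)

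The key input here is Lemma~\ref{lma.aab6}: since $\1\in\Ran H_\theta$, it yields $ap_0\in E_{H_w}(s)$ for every inner divisor $a$ of $\varphi$. Hence any $k\in\calK_\varphi\cap E_{H_w}(s)^\perp$ satisfies $(ap_0,k)=0$ for all inner $a\mid\varphi$. Using $k\in\calK_\varphi$, equivalently $\varphi\overline k\in z\calH^2$, I would rewrite $ap_0\overline k=\overline c\,(p\overline k)$ with $c=\varphi/a$, turning these into the moment conditions $\int_\bbT\overline c\,G\,dm=0$ for all inner $c\mid\varphi$, where $G:=p\overline k=p_0\varphi\overline k$ is a single element of $z\calH^1$ lying in the space $p\,\overline{\calK_\varphi}$. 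Equivalently, since $T_{\overline a}$ maps $\calK_\varphi$ onto $\calK_{\varphi/a}$, the conditions read $(p_0,T_{\overline a}k)=0$ for all $a\mid\varphi$.

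The main obstacle is precisely to deduce $k=0$ (i.e. $G=0$) from this family of conditions, and this is exactly where the outerness of $p_0$ is indispensable: because $p_0$ has no zeros, the associated system on $\overline{\calK_\varphi}$ is triangular and nondegenerate, so it cannot be satisfied by a nonzero $k$. Concretely, for $\deg\varphi<\infty$ I would verify this directly on a Malmquist–Walsh basis built from the zeros of $\varphi$ (the role of $p_0(0)\neq0$ being visible already in the case $\varphi=z^n$), while for $\deg\varphi=\infty$ I would instead argue that $\{P_{\calK_\varphi}(ap_0):a\mid\varphi\}$ spans a dense subspace of $\calK_\varphi$, again relying on the cyclicity forced by outerness of $p_0$. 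Combining $\rank\Pi=\dim\calK_\varphi=\deg\varphi$ with the inequality of the second paragraph gives $\deg\varphi\le\dim\bigl(E_{H_w}(s)\ominus E_{H_u}(s)\bigr)$, which closes the chain of estimates and completes the proof of Theorem~\ref{thm.z0}(ii).
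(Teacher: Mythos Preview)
Your overall architecture is sound and close in spirit to the paper's: both arguments compare $E_{H_w}(s)$ and $E_{H_u}(s)$ modulo $\varphi\calH^2$ and feed in Lemma~\ref{lma.aab6}. The difference is in how the outer factor $p_0$ is handled at the final step, and this is where your argument has a genuine gap.

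You project $E_{H_w}(s)$ orthogonally onto $\calK_\varphi$ and then need to show that $\{P_{\calK_\varphi}(ap_0):a\mid\varphi\}$ spans (or is dense in) $\calK_\varphi$. You acknowledge this as the ``main obstacle'' but do not actually prove it: the Malmquist--Walsh triangular argument for $\deg\varphi<\infty$ is plausible for $\varphi=z^n$ but is not carried out and needs care for repeated zeros, while for $\deg\varphi=\infty$ the appeal to ``cyclicity forced by outerness of $p_0$'' is not a recognisable theorem and is left entirely as a slogan. Because $P_{\calK_\varphi}$ does not commute with multiplication by $p_0$, there is no obvious reduction of your claim to the $p_0=\1$ case, so this really is the heart of the matter and cannot be waved away.

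The paper sidesteps exactly this difficulty by replacing the orthogonal complement with the \emph{algebraic quotient}. Since multiplication by the outer function $p_0$ is injective on $\calH^2$, one has
\[
\dim\bigl(p_0\,\Span\{a:a\mid\varphi\}\,\Ran H_\theta\,\big/\,p_0\varphi\,\Ran H_\theta\bigr)
=\dim\bigl(\Span\{a:a\mid\varphi\}\,\Ran H_\theta\,\big/\,\varphi\,\Ran H_\theta\bigr),
\]
so $p_0$ disappears from the estimate entirely. The remaining lower bound then reduces to the purely inner statement $\dim\Span\{a-(a,\varphi)\varphi:a\mid\varphi\}=\deg\varphi$, which is elementary (the $a$'s lie in $\calK_{z\varphi}$, and this is just the projection of the divisors onto $\calK_\varphi$). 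In short, the paper's quotient trick converts your hard analytic density claim involving $p_0$ into a trivial injectivity statement; if you want to salvage your orthogonal-projection route, you would need to supply a full proof of the density claim, and it is not clear this is any easier than the lemma itself.
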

\begin{proof}
Since  (by Lemma~\ref{lma.aab6})
$$
p_0\Span\{a: a|\varphi\}\Ran H_\theta\subset E_{H_w}(s)
$$
and $E_{H_u}(s)=p_0\varphi\Ran H_\theta$, we get, replacing the orthogonal complement by the algebraic quotient space,
\begin{align*}
\dim E_{H_w}(s)\ominus E_{H_u}(s)&=\dim E_{H_w}(s)/ E_{H_u}(s)
\\
&\geq\dim(p_0\Span\{a: a|\varphi\}\Ran H_\theta / p_0\varphi\Ran H_\theta).
\end{align*}
Since multiplication by $p_0$ is an injective operation, we have 
\begin{align*}
\dim(p_0\Span\{a: a|\varphi\}&\Ran H_\theta / p_0\varphi\Ran H_\theta)
\\
&=\dim(\Span\{a: a|\varphi\}\Ran H_\theta / \varphi\Ran H_\theta)
\\
&=\dim(\Span\{a: a|\varphi\}\Ran H_\theta \ominus \varphi\Ran H_\theta)
\\
&\geq\dim\Span\{a-(a,\varphi)\varphi: a|\varphi\},
\end{align*}
because for $a|\varphi$ and $f\in \calH^2(\bbT)$
\begin{multline*}
(a-(a,\varphi)\varphi,\varphi f)
=(a,\varphi f)-(a,\varphi)(\1,f)
=(\1,\overline{a}\varphi f)-(a,\varphi)(\1,f)
\\
=(\1,\overline{a}\varphi)(\1,f)-(a,\varphi)(\1,f)=0.
\end{multline*}
Finally, it is elementary to observe that
$$
\dim\Span\{a-(a,\varphi)\varphi: a|\varphi\}=\deg \varphi. 
$$
\end{proof}
\begin{proof}[Proof of Theorem~\ref{thm.z0}(ii)]
Putting together the last two lemmas, we obtain $\deg\varphi\leq n(s;H_u)$. 
Combining this with Lemma~\ref{lma.aab5}, we arrive at the desired conclusion
$$
\deg\varphi=n(s;H_u)=\rank H_{u-v}.
$$
\end{proof}

\section{Proof of Theorem~\ref{thm.bz0}}\label{app.B}
We consider the conformal map 
$$
\mu: \bbC_+\to\bbD, \quad \mu(z)=\frac{z-i}{z+i}, 
$$
and the corresponding unitary operator $U_\mu:\calH^2(\bbT)\to\calH^2(\bbR)$, 
\[
U_\mu f=\bbf, \quad \bbf(z)=\frac{1}{\sqrt{\pi}(z+i)}f(\mu(z)), \quad \Im z>0.
\label{B1}
\]
It is evident that $U_\mu$ maps a Beurling subspace $\theta\calH^2(\bbT)$ onto 
$(\theta\circ\mu)\calH^2(\bbR)$, and therefore 
$$
U_\mu\calK_\theta=\bcalK_{\theta\circ\mu}. 
$$
From here one reads off the formula for the action of $U_\mu$ on weighted model spaces: if 
$p$ is an isometric multiplier on $\calK_\theta$, then 
\[
U_\mu(p\calK_\theta)=\bp\bcalK_{\theta\circ\mu}, \quad \bp=p\circ \mu,
\label{B3}
\]
and $\bp$ is an isometric multiplier on $\bcalK_{\theta\circ\mu}$. 
Next, we have a straightforward
\begin{proposition}\label{prp.B3}
Let $\bu\in\BMOA(\bbR)$ and 
let $U_\mu$ be as in \eqref{B1}. Then we have
$$
U_\mu^*\bH_\bu U_\mu=H_{S^*u},
\quad
\bu(x)=u(\mu(x)).
$$
\end{proposition}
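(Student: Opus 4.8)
The plan is to realise $U_\mu$ as the restriction of a unitary defined on all of $L^2(\bbT)$, so that it intertwines the two Szeg\H o projections, and then to transport the defining formula $\bH_\bu\bbf=\bP(\bu\overline\bbf)$ to the circle, where the conformal factor will automatically produce the backward shift.

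First I would observe that the formula \eqref{B1}, read on boundary values, defines a unitary $\calU\colon L^2(\bbT)\to L^2(\bbR)$, $(\calU f)(x)=\frac{1}{\sqrt{\pi}(x+i)}f(\mu(x))$. Isometry follows from the change of variables $w=\mu(x)$, under which the push-forward of $\frac{dx}{\pi(1+x^2)}$ is the normalised arc-length measure on $\bbT$; surjectivity is standard. Since $\calU$ maps $\calH^2(\bbT)$ onto $\calH^2(\bbR)$ (the Beurling--Lax correspondence already invoked above), being unitary it maps $\calH^2(\bbT)^\perp$ onto $\calH^2(\bbR)^\perp$, whence
$$
\bP\calU=\calU\proj .
$$
This intertwining of projections is the structural ingredient I will use.

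Next I would compute $\bu\overline\bbf$ on the real axis for $\bbf=\calU f$. Using $\bu(x)=u(\mu(x))$ and $\overline{\bbf(x)}=\frac{1}{\sqrt{\pi}(x-i)}\overline{f(\mu(x))}$, and writing $w=\mu(x)\in\bbT$, the crucial point is that the quotient of the two conformal factors is
$$
\frac{x+i}{x-i}=\overline{\mu(x)}=\overline{w}.
$$
Hence $\bu\overline\bbf=\calU g$, where $g$ is the circle function $g(w)=\overline{w}\,u(w)\overline{f(w)}$. Applying the intertwining relation gives
$$
U_\mu^*\bH_\bu U_\mu f=\calU^*\bP(\bu\overline\bbf)=\calU^*\bP\calU g=\proj g=\proj(\overline{w}\,u\,\overline{f}).
$$
It then remains to recognise the right-hand side: splitting $u\overline f=\proj(u\overline f)+(I-\proj)(u\overline f)$ and noting that multiplication by $\overline{w}$ maps $\calH^2(\bbT)^\perp$ into itself, the second summand is annihilated by $\proj$, so
$$
\proj(\overline{w}\,u\,\overline{f})=\proj\bigl(\overline{w}\,\proj(u\overline f)\bigr)=S^*H_uf=K_uf=H_{S^*u}f,
$$
which is exactly the asserted identity.

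The computations are formal as long as $\bu\overline\bbf\in L^2(\bbR)$, so the main obstacle is the functional-analytic bookkeeping rather than any single calculation. Since $u\in\BMOA(\bbT)$ guarantees that both $U_\mu^*\bH_\bu U_\mu$ and $H_{S^*u}$ are bounded operators on $\calH^2(\bbT)$, I would establish the identity first on a convenient dense class (for instance those $f$ for which $u\overline f\in L^2$, or rational $f$ with poles off $\overline{\bbD}$) and then extend to all of $\calH^2(\bbT)$ by continuity. The only genuinely delicate step is the tracking of the conformal factor $\overline{\mu}=\overline{w}$, which is precisely what converts $H_u$ into its shifted companion $K_u=S^*H_u=H_{S^*u}$; everything else is the routine combination of the projection intertwining $\bP\calU=\calU\proj$ with the elementary fact that $\overline{w}$ preserves $\calH^2(\bbT)^\perp$.
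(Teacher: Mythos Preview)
Your proof is correct and follows essentially the same route as the paper: both hinge on the single observation that the mismatch of conformal factors produces $\frac{x+i}{x-i}=\overline{\mu(x)}=\overline{w}$, which is what turns $H_u$ into $S^*H_u=H_{S^*u}$. The paper carries this out in weak form, computing $(\bH_\bu U_\mu h_1, U_\mu h_2)$ for $h_1,h_2\in\calH^\infty(\bbT)$ and changing variables in the integral, whereas you package the same calculation operator-theoretically by extending $U_\mu$ to a unitary on all of $L^2$ and invoking the intertwining $\bP\,\calU=\calU\,\proj$; but the content is identical.
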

\begin{proof}
The following calculation is valid for the dense set of functions $h_1,h_2\in\calH^\infty(\bbT)$
and then the result extends to all $h_1,h_2\in \calH^2(\bbT)$:
\begin{multline*}
(\bH_\bu U_\mu h_1,U_\mu h_2)_{L^2(\bbR)}
=
(\bu,(U_\mu h_1)(U_\mu h_2))_{L^2(\bbR)}
\\
=
\int_{-\infty}^\infty \bu(x)\overline{h_1(\mu(x))}\overline{h_2(\mu(x))}\frac{dx}{\pi(x-i)^2}
=
\int_{-\infty}^\infty \frac{x+i}{x-i}\bu(x)\overline{h_1(\mu(x))}\overline{h_2(\mu(x))}\frac{dx}{\pi(x^2+1)}
\\
=
\int_{-\pi}^\pi e^{-i\theta} u(e^{i\theta})\overline{h_1(e^{i\theta})}\overline{h_2(e^{i\theta})}\frac{d\theta}{2\pi}
=
(H_{S^*u}h_1,h_2)_{L^2(\bbT)},
\end{multline*}
as required.
\end{proof}

\begin{proof}[Proof of Theorem~\ref{thm.bz0}]
Let us apply Proposition~\ref{prp.B3}; we obtain $U_\mu^*\bH_\bu U_\mu=H_{S^*u}=K_u$, with 
$u\circ\mu=\bu$. By Theorem~\ref{thm.z0}(i), we have
$$
E_{K_u}(s)=p\Ran H_\theta
$$
and 
\[
K_uT_p=sT_pH_\theta\text{ on }\Ran H_\theta,
\label{B2}
\] 
for some inner $\theta$ and for an isometric multiplier $p$ on $\Ran H_\theta$. 
Now let us apply \eqref{B3}; 
this gives
$$
\Ker (\bH_\bu^2-s^2I)=
U_\mu(p\Ran H_\theta)=U_\mu(p\calK_{z\theta})=\bp\bcalK_{\btheta}=\bp\Ran\bH_{\btheta}, 
$$
with 
$$
\bp=p\circ\mu, \quad 
\btheta(z)=\mu(z)\theta(\mu(z)). 
$$
Applying $U_\mu$ to \eqref{B2}, we obtain
$$
\bH_\bu U_\mu T_p=sU_\mu T_p H_\theta\text{ on }\Ran H_\theta.
$$
Observing that $U_\mu T_p=\bT_{\bp}U_\mu$, we arrive at
$$
\bH_\bu \bT_\bp U_\mu=s\bT_\bp U_\mu H_\theta\text{ on }\Ran H_\theta.
$$
By Proposition~\ref{prp.B3}, we have $U_\mu H_\theta=U_\mu K_{S\theta}=\bH_\btheta U_\mu$, 
and so 
$$
\bH_\bu \bT_\bp U_\mu=s\bT_\bp \bH_\btheta U_\mu \text{ on }\Ran H_\theta.
$$
Finally, by \eqref{B3} we have $U_\mu\Ran H_\theta=U_\mu\calK_{S\theta}=\Ran\bH_\btheta$. 
This completes the proof of part (i).

Part (ii) is a direct consequence of Theorem~\ref{thm.z0}(ii) and the fact that the degree of the inner 
factor of $p$ is invariant under the composition with $\mu$. 
\end{proof}

\appendix

\section{Linear Hankel operators}\label{sec.app}
Here we rewrite Theorem~\ref{thm.z0}(i) in terms of a representation for the Hankel matrix $\Gamma$ as a linear (rather than anti-linear) operator on the Hardy space. Let $J$ be the linear involution in $L^2(\bbT)$, 
$$
Jf(z)=f(\overline{z}), \quad z\in \bbT,
$$
and let $\bC$ be the anti-linear involution in $\calH^2(\bbT)$, 
$$
\bC f(z)=\overline{f(\overline{z})}, \quad z\in \bbT. 
$$
For a symbol $u\in \BMOA(\bbT)$, let us define the \emph{linear} Hankel operator $G_u$ in $\calH^2(\bbT)$ by 
$$
G_u f=P(u\cdot Jf). 
$$
It is straightforward to see that 
$$
(G_uz^n,z^m)=\wh u(n+m), 
$$
and so $G_u$ is unitarily equivalent to the Hankel matrix 
$\Gamma=\{\wh u(n+m)\}_{n,m=0}^\infty$. 
\begin{theorem}
Let $s$ be a singular value of $G_u$. Then there exists an inner function $\theta$ and an isometric multiplier $p$ on $\Ran H_\theta$ such that 
\begin{align*}
\Ker (G_u^*G_u-s^2I)&=\bC(p\Ran H_\theta). 
\\
\Ker (G_uG_u^*-s^2I)&=p\Ran H_\theta. 
\end{align*}
The action 
$$
G_u:\Ker (G_u^*G_u-s^2I)\to \Ker (G_uG_u^*-s^2I)
$$
is given by 
$$
G_u\bC(pf)=sp\theta\overline{f}, \quad
f\in \Ran H_\theta.
$$
\end{theorem}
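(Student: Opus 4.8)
The plan is to reduce the entire statement to Theorem~\ref{thm.z0}(i) by recording two purely algebraic identities that relate the \emph{linear} operator $G_u$ to the \emph{anti-linear} operator $H_u$ and the involution $\bC$. Concretely, I expect to prove
$$
G_u=H_u\bC,\qquad G_u^*=\bC H_u,
$$
after which $G_uG_u^*=H_u^2$ and $G_u^*G_u=\bC H_u^2\bC$, so that the kernel descriptions and the action become a direct translation of the decomposition $E_H(s)=p\Ran H_\theta$.

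First I would establish $G_u=H_u\bC$. This is immediate from the definitions once one notes the pointwise identity $\overline{\bC f}=Jf$ (indeed $\overline{\bC f(z)}=\overline{\,\overline{f(\overline z)}\,}=f(\overline z)=Jf(z)$); hence $H_u(\bC f)=P(u\,\overline{\bC f})=P(u\cdot Jf)=G_uf$. Next I would compute the Hilbert space adjoint $G_u^*$. Starting from $(G_uf,g)=(H_u\bC f,g)$ and using the symmetry $(H_ua,b)=(H_ub,a)$ yields $(H_u g,\bC f)$; the anti-unitary symmetry of $\bC$, namely $(\bC a,b)=(\bC b,a)$, then lets me move $\bC$ across the inner product, giving $(G_uf,g)=(f,\bC H_u g)$, i.e. $G_u^*=\bC H_u$. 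This adjoint bookkeeping---keeping track of which conjugations the anti-linear operators $H_u$ and $\bC$ introduce into the inner product---is the one place where care is needed and is the main (though modest) obstacle; everything else is formal.

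With these two identities in hand the theorem follows at once. Since $\bC^2=I$, we have $G_uG_u^*=H_u\bC^2H_u=H_u^2$, so $\Ker(G_uG_u^*-s^2I)=E_H(s)=p\Ran H_\theta$ by Theorem~\ref{thm.z0}(i). Likewise $G_u^*G_u=\bC H_u^2\bC$, whence $G_u^*G_u-s^2I=\bC(H_u^2-s^2I)\bC$ (using that $s$ is real and $\bC^2=I$), and therefore $\Ker(G_u^*G_u-s^2I)=\bC\,\Ker(H_u^2-s^2I)=\bC(p\Ran H_\theta)$. Finally, for the action I would take an arbitrary $\bC(pf)$ with $f\in\Ran H_\theta$ and compute $G_u\bC(pf)=H_u\bC^2(pf)=H_u(pf)$; invoking the relation $H_uT_p=sT_pH_\theta$ on $\Ran H_\theta$ from Theorem~\ref{thm.z0}(i) together with $H_\theta f=\theta\overline f$ from Lemma~\ref{lma.a2}, this gives $G_u\bC(pf)=sp\,H_\theta f=sp\theta\overline f$. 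This last computation also confirms that $G_u$ maps $\bC(p\Ran H_\theta)$ into $p\Ran H_\theta$, consistent with the asserted domain and codomain.
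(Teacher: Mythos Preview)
Your proof is correct and follows exactly the approach of the paper, which simply records the identifications $G_u=H_u\bC$ and $G_u^*=\bC H_u$ and declares the result an immediate consequence of Theorem~\ref{thm.z0}. You have merely (and correctly) filled in the routine verifications that the paper leaves to the reader.
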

This theorem immediately follows from Theorem~\ref{thm.z0} after identification 
$$
G_u=H_u\bC, \quad G_u^*=\bC H_u. 
$$



\begin{thebibliography}{MTW1}

\bibitem{AAK3}
{\sc V.~M.~Adamjan, D.~Z.~Arov, M.~G.~Kre\u\i n},
\emph{Analytic properties of the Schmidt pairs of a Hankel operator 
and the generalized Schur-Takagi problem}, 
Math. USSR-Sb. \textbf{15} (1971), 31--73.

\bibitem{Aleman}
{\sc A.~Aleman, S.~Richter},
\emph{Simply invariant subspaces of $H^2$ of
some multiply connected regions},
Integr. Equat. Oper. Th. \textbf{24} (1996), 127--155; 
Erratum \textbf{29} (1997), 501--504. 


\bibitem{Beurling}
{\sc A.~Beurling},
\emph{On two problems concerning linear transformations in Hilbert space},
Acta Math. \textbf{81} (1949), 239--255. 

\bibitem{BS}
{\sc M.~S.~Birman, M.~Z.~Solomjak},
\emph{Spectral Theory of Self-Adjoint Operators in Hilbert Space},
Reidel, Dordrecht, 1987. 


\bibitem{Partington}
{\sc M.~C.~C\^amara, J.~Partington,}
\emph{Toeplitz kernels and model spaces,}
Oper. Theory Adv. Appl., \textbf{268} (2018), 139--153. 


\bibitem{Crofoot}
{\sc R.~B.~Crofoot,}
\emph{Multipliers between invariant subspaces of the backwards shift,}
Pacific J. Math. \textbf{166}, no.~2 (1994), 225--256. 

\bibitem{Dyakonov}
{\sc K.~M.~Dyakonov,} 
\emph{Kernels of Toeplitz operators via Bourgain's factorization theorem,} 
J. Funct. Anal. \textbf{170} (2000), no. 1, 93--106.

\bibitem{Fricain}
{\sc E.~Fricain, A.~Hartmann, W.~Ross,}
\emph{Multipliers between model spaces,}
Studia Math. \textbf{240} (2018), no. 2, 177--191. 

\bibitem{GGAst} 
{\sc P.~G\'erard, S.~Grellier, }
\emph{The cubic Szeg\H{o} equation and Hankel operators}, 
 {\it Ast\'erisque} \textbf{389} (2017).


\bibitem{GGEsterle} 
{\sc P.~G\'erard, S.~Grellier, }
\emph{Inverse spectral problem for a pair of self-adjoint Hankel operators},
Harmonic Analysis, Function Theory, Operator Theory and Applications, 159--169. 

\bibitem{paperB}
{\sc P.~G\'erard, A.~Pushnitski,}
\emph{Inverse spectral theory for a class of non-compact Hankel operators}, 
Mathematika \textbf{65}, no.1 (2019), 132--156.


\bibitem{Hartmann}
{\sc A.~Hartmann, M.~Mitkovski,}
\emph{Kernels of Toeplitz operators,} 
Recent progress on operator theory and approximation in spaces of analytic functions, 147--177, 
Contemp. Math., \textbf{679}, Amer. Math. Soc., Providence, RI, 2016. 

\bibitem{Hayashi}
{\sc E.~Hayashi,}
\emph{The kernel of a Toeplitz operator},
Integral Equations and Operator Theory \textbf{9} (1986), 588--591.

\bibitem{Hitt}
{\sc D.~Hitt,}
\emph{Invariant subspaces of $\calH^2$ of an annulus,}
Pacific Journal of Mathematics, \textbf{134}, no.1 (1988), 101--120. 

\bibitem{Lax}
{\sc P.~Lax,} 
\emph{Translation invariant spaces,} 
Acta Math. \textbf{101} (1959), 163--178. 

\bibitem{MPT}
{\sc A.~V.~Megretski\u\i, V.~V.~Peller, S.~R.~Treil',}
\emph{The inverse spectral problem for self-adjoint Hankel operators}, 
Acta Math. \textbf{174} (1995), no. 2, 241--309.

\bibitem{Nikolski}
{\sc N.~K.~Nikolski,} 
\emph{Operators, functions, and systems: an easy reading,} vol. I: Hardy, Hankel,
and Toeplitz, Math. Surveys and Monographs vol. 92, Amer. Math. Soc., Providence, Rhode
Island, 2002.

\bibitem{Peller}
{\sc V.~V.~Peller,}
\emph{Hankel operators and their applications,}
Springer, 2003.


\bibitem{Sarason}
{\sc D.~Sarason,}
\emph{Nearly invariant subspaces of the backward shift,}
Operator Theory: Advances and Applications, \textbf{35} (1988), 481--493. 



\bibitem{Treil}
{\sc S.~R.~Treil',}
\emph{An inverse spectral problem for the modulus of the Hankel operator, 
and balanced realizations,} 
Leningrad Math. J. \textbf{2} (1991), no. 2, 353--375. 


\end{thebibliography}
\end{document}